	\newtheorem{theorem}{Theorem}
	\newtheorem{proposition}[theorem]{Proposition}
	\newtheorem{remark}[theorem]{Remark}
	\newtheorem{lemma}[theorem]{Lemma}
	\newtheorem{corollary}[theorem]{Corollary}
	\newtheorem{conjecture}[theorem]{Conjecture}
    \newtheorem{problem}[theorem]{Problem}
	\numberwithin{theorem}{section}
	\def\R{{\mathbb{R}}}
	\def\Z{{\mathbb{Z}}}
	\def\N{{\mathbb{N}}}
	\def\E{{\mathbb{E}}}
	\def\P{{\mathbb{P}}}
	\def\1{\mathds{1}}
	\def\Var{\textup{Var}}
	\def\omg{\omega}
	\DeclareMathOperator{\Cov}{Cov}
	\DeclareMathOperator{\Corr}{\mathrm{Corr}}
	\DeclareMathOperator{\I}{I}
	\def\e{\mathbf{e}}
	\def\u{\mathbf{u}}
	\def\argmax{\mathrm{argmax}}
	\def\eps{\varepsilon}
	\def\p{\partial}
\title{Noise sensitivity in last-passage percolation}
\author{
Daniel Ahlberg\thanks{Department of Mathematics, Stockholm University, 
\texttt{daniel.ahlberg@math.su.se}}
\and
Malo Hillairet\thanks{Institut Fourier, Universit\'e Grenoble Alpes,
\texttt{malo.hillairet@univ-grenoble-alpes.fr}}
\and
Ekaterina Toropova\thanks{Department of Mathematics, Stockholm University, 
\texttt{ekaterina.toropova@math.su.se}}
}
\date{January 8, 2026}
\begin{document}

\maketitle

\begin{abstract}
The study of noise sensitivity of Boolean functions was initiated in a seminal paper of Benjamini, Kalai and Schramm, published in 1999. While this study has revealed fascinating phenomena in the context of Bernoulli percolation, few results have been obtained regarding other random spatial processes. In this paper we prove the first instance of noise sensitivity for a spatial growth process associated to the Kardar-Parisi-Zhang class of universality. More specifically, we show that travel times in geometric last-passage percolation are noise sensitive with respect to a perturbation acting on a Bernoulli encoding of the geometric weights. Our method of proof includes a generalisation of the celebrated Benjamini-Kalai-Schramm noise sensitivity/influence theorem, and precise bounds on the probability of a given vertex being on a geodesic, which we believe to be of independent interest.\\

\noindent
{\em Keywords:} Noise sensitivity; last-passage percolation; BKS theorem; KPZ universality.

\noindent
{\em Funding:} Research in part supported by the Swedish Research Council through grant 2021-03964.
\end{abstract}

\section{Introduction}

In the study of random spatial processes and structures there is an abundance of models with a simple probabilistic formulation at a microscopic level that give rise to highly complex behaviour at macroscopic scales. It is believed that such models to a large extent can be divided into classes, referred to as universality classes, where all models in one class exhibit a similar large-scale behaviour, although their local description differs. This large scale behaviour is characterised by deterministic quantities such as power-law exponents describing the scaling of different features at the critical point. Two prominent examples of universality classes are those of planar Bernoulli percolation on the triangular lattice, described by work of Schramm~\cite{schramm00} and Smirnov~\cite{smi01}, and the KPZ universality class for $1+1$ dimensional growing interfaces, introduced in work of Kardar, Parisi and Zhang~\cite{karparzha86}, and made rigorous in work of Baik, Deift and Johansson~\cite{baideijoh99,joh00}.


The most classic model associated to the KPZ universality class was introduced in the 1960s in work of Eden \cite{eden61} and Hammersley and Welsh~\cite{hamwel65}, and is referred to as first-passage percolation. In its most simple setting, the edges of the $\Z^2$ nearest-neighbour lattice are equipped with i.i.d.\ non-negative random weights, inducing a random metric on $\Z^2$.
A central objective is to understand the asymptotic behaviour of distances, balls and geodesics in this random metric space. 
Despite its simple formulation, there is no rigorous mathematical analysis proving that first-passage percolation satisfies the predictions of KPZ universality. 
However, there are closely related models, whose formulation is amenable to favourable calculations, that tie them to the KPZ class. 
One such model is last-passage percolation, in which non-negative random weights are assigned to the vertices instead of edges, and where weight is maximised over directed paths as opposed to minimised over all paths.
In the case of exponentially or geometrically distributed weights, this models has serendipitous connections to random matrix models via combinatorial identities, allowing for a precise analysis of fluctuations of distances and geodesic, in agreement with the predictions of KPZ universality~\cite{joh00,baideimclmilzho01}.


In parallel, the exploration of noise sensitivity, introduced in seminal work of Benjamini, Kalai and Schramm~\cite{benkalsch99},
has highlighted a remarkable phenomenon in Bernoulli percolation.
In this context, noise sensitivity refers to the surprising property that a small random perturbation of a critical percolation configuration is sufficient in order to completely decorrelate its connectivity properties. More precisely, critical Bernoulli bond percolation on $\Z^2$ is noise sensitive in the sense that if $\omega$ is a configuration of 0s and 1s encoding the states of the edges (i.e.\ open or closed), the perturbation $\omega^\eps$ is obtained from $\omega$ by independently resampling each entry of $\omega$ with probability $\eps$, and $f_n$ is the function encoding the existence of a horizontal crossing of open edges of an $n\times n$ square, then 
\begin{equation}\label{def:bksns}
\E[f_n(\omega)f_n(\omega^\eps)]-\E[f_n(\omega)]^2\to0\quad\text{as }n\to\infty.
\end{equation}

Noise sensitivity is straightforwardly extended to the framework of Boolean functions, that is, functions of the form $f:\{0, 1\}^n\to\{0, 1\}$. Again we sample an element $\omega\in\{0, 1\}^n$ according to (a possibly biased) product measure, and obtain a perturbation $\omega^\eps$ of $\omega$ by independently resampling each coordinate with probability $\eps$. A sequence $(f_n)_{n\ge1}$ of Boolean functions is said to be {\bf noise sensitive} if it satisfies~\eqref{def:bksns}. The mathematical techniques for the study of noise sensitivity have been extensively developed in the context of Boolean functions~\cite{garste15, odonnell}. These methods include the study of influences, Fourier decomposition and randomized algorithms. These methods have led to a precise description, in planar Bernoulli percolation, of the transition from a stable regime, where the noise is too weak to alter percolation crossings, to a noise sensitive regime \cite{schste10,garpetsch10,tasvan23}. For Bernoulli site percolation on the triangular lattice, where critical exponents are known explicitly, Garban, Pete and Schramm~\cite{garpetsch10} showed that the transition occurs for the noise strength $\eps$ scaling with $n$ as $\eps\asymp n^{-3/4}$.

In related work, Schramm and Smirnov~\cite{schsmi11} proved that the scaling limit of Bernoulli percolation is a black noise -- a notion introduced by Tsirelson~\cite{tsi04} -- referring to the property that observables cannot be expressed through linear functions or averages, which are the most stable to noise. Thus, being a black noise established a form of noise sensitive behaviour for the scaling limit, although it is not known if noise sensitivity of a percolation model implies that its scaling limit is a black noise, or vice versa.
Recently, Himwich and Parekh~\cite{himpar25} showed that also the directed landscape is a black noise. The directed landscape was introduced by Dauvergne, Ortmann and Virag~\cite{dauortvir22} as the supposed scaling limit of models in the KPZ class of universality. This indicates that models in the KPZ class can be expected to be noise sensitive.

To this point, noise sensitivity has not been established for a single model associated to KPZ universality. In fact, the only prior evidence for noise sensitivity in the context of spatial growth appears to be work of Ahlberg and de la Riva~\cite{ahlriv26}, elaborated upon by Dembin and Elboim~\cite{demelb25}. Their work shows that `being above the median' is a noise sensitive property in a simplified model of first-passage percolation exhibiting Gaussian fluctuations. One difficulty in studying noise sensitivity in the context of spatial growth is that central observables are not Boolean functions, which limits the existing techniques. Another difficulty is that large regions need to be explored in order to identify the weight maximising path between two points, rendering exploration-based algorithm methods inadequate.


In this paper we establish noise sensitivity for a model known to belong to the KPZ class. More precisely, we prove that planar last-passage percolation with geometric weights exhibits noise sensitivity, via the method of influences. In order to accommodate the method to the present context, we extend the BKS influence theorem from Boolean to real-valued functions, and present precise bounds on the probability that a vertex is visited by the weight-maximising path between two given points.

\subsection{Main result}

We consider planar last-passage percolation with geometrically distributed weights, which we describe next. To the integer points of $\Z^2$ we assign independent geometrically distributed weights $\omega=(\omega_x)_{x\in\Z^2}$ of parameter $p \in (0, 1)$.\footnote{We work with the convention that $\N=\{0,1,2,\ldots\}$ and that $X$ is geometrically distributed with parameter $p\in(0,1)$ if $\P(X = k) = p(1 - p)^k$ for $k\in\N$.}
We introduce a partial order on $\Z^2$ and write $u\le v$ for $u=(u_1,u_2)$ and $v=(v_1,v_2)$ if $u_1\le v_1$ and $u_2\le v_2$. Let $\e_1:=(1,0)$ and $\e_2:=(0,1)$ denote the coordinate vectors. Given $u\le v$, a {\bf directed path} $\gamma$ from $u$ to $v$ is an upwards-rightwards sequence of vertices $x_0=u,x_1,\ldots,x_n=v$ where $x_i-x_{i-1}\in\{\e_1,\e_2\}$. For $u\le v$, we define the {\bf last-passage time}, also referred to as the {\bf travel time}, from $u$ to $v$ as
\begin{equation}\label{def:T}
T(u,v):=\max\big\{T(\gamma):\gamma\text{ is a directed path from $u$ to $v$}\big\},\quad T(\gamma):=\sum_{x\in\gamma}\omega_x.
\end{equation}

We refer to a path attaining the maximum in the definition of $T(u, v)$ as a \textbf{geodesic} from $u$ to $v$. Since the travel time is integer-valued, there may be multiple geodesics between two points, and we let $\pi(u, v)$ denote the set of points which belong to at least one of the geodesic from $u$ to $v$, that is,
\[
   \pi(u, v) := \big\{x \in \Z^2 : \exists \text{ a geodesic } \gamma \text{ from } u \text{ to } v \text{ such that } x \in \gamma\big\}.
\]
For ease of notation we occasionally write $T_n:=T(0,n\e_+)$ and $\pi_n := \pi(0, n \e_+)$, where $\e_+:=\e_1+\e_2=(1,1)$. In addition, we shall write $T_n(\omega)$ when we want to make explicit (as we often do) that $T_n$ is a function of the weight configuration $\omega$.

We next introduce a random perturbation of the weight configuration, and to do so we begin with an encoding of the geometric weights via Bernoulli random variables.
For each $v\in\Z^2$ and $i\in\N=\{0, 1,2,\ldots\}$, let $X_{v,i}$ be independent Bernoulli distributed random variables with parameter $p\in(0,1)$, and set
\begin{equation}\label{eq:encoding}
    \omega_v:=\min\{i\ge0:X_{v,i}=1\}.
\end{equation}
Hence, $(\omega_v)_{v\in\Z^2}$ are independent and geometrically distributed with parameter $p$.

We now introduce the perturbation. For $v\in\Z^2$ and $i\in\N$, let $X'_{v,i}$ be Bernoulli distributed with parameter $p$ and let $U_{v,i}$ be exponentially distributed with parameter $1$ (i.e.\ with density $t \mapsto e^{-t}$ with respect to the Lebesgue measure on $[0, \infty)$). We assume $X_{v, i}, X'_{v, i}, U_{v, i}$ to be independent of each other, and independent between different $v$ and $i$. For $t \geq 0$, let
\begin{equation*}\label{eqn: noise definition}
X_{v,i}^t:=\left\{
\begin{aligned}
    X_{v,i} && \text{if }t<U_{v,i};\\
    X'_{v,i} && \text{if }t\ge U_{v,i}.
\end{aligned}
\right.
\end{equation*}
Finally, we define $\omega^t$, for $v\in\Z^2$ and $t\ge0$, as
\begin{equation}\label{eq:bitresampling}
    \omega_v^t:=\min\{i\ge0:X^t_{v,i}=1\}.
\end{equation}
Note that for every $v\in\Z^2$ we have $\omega_v^0=\omega_v$, and that for $t\ge0$ we have that $\omega_v^t$ is geometrically distributed with parameter $p$, and that the variables $(X^t_{v,i})$ are obtained from $(X_{v,i})$ by resampling each bit with probability $\eps=1-e^{-t}$. For small $t>0$, one may hence think of $\omega^t$ as a small perturbation of $\omega$.

Our main theorem states that the sequence $(T_n)_{n\ge1}$ exhibits noise sensitivity, and provides the first instance of noise sensitivity in a model associated to the KPZ class of universality.

\begin{theorem} \label{th:nsenoftraveltimes}
For $n\ge1$, let $T_n = T_n(\omg)$ denote the last-passage time from $(0, 0)$ to $(n, n)$ with respect to the weight configuration $\omega$. 
Planar last-passage percolation with geometric weights is noise sensitive in the sense that, for every $p\in(0,1)$ and $t>0$,
\[
\lim_{n \rightarrow \infty} \Corr \big(T_n(\omg), T_n(\omg^t) \big) = 0 .
\]
\end{theorem}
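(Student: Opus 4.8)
The plan is to view the last-passage time $T_n$ as a real-valued function of the Bernoulli bits $(X_{v,i})$ through the encoding \eqref{eq:encoding} and to adapt the influence method of Benjamini, Kalai and Schramm. First I would reduce to finitely many bits: truncating each geometric weight at level $M=C\log n$ changes $T_n$ only on an event of probability at most $(n+1)^2(1-p)^{M+1}$, so — using polynomial moment bounds for $T_n$ — this alters $\Var(T_n)$ and $\Cov(T_n(\omega),T_n(\omega^t))$ by $o(\Var(T_n))$ once $C$ is large. Working with the resulting function $f=f_n$ of finitely many $\mathrm{Bernoulli}(p)$ coordinates, expanding it in the Fourier--Walsh basis $\{\chi_S\}$, and using that the perturbation preserves the law of $\omega$ together with $\E[\chi_S(X)\chi_S(X^t)]=(1-\eps)^{|S|}$, one obtains for every $k\ge1$
\[
   \Corr\big(T_n(\omega),T_n(\omega^t)\big)=\frac{1}{\Var(f)}\sum_{S\neq\emptyset}(1-\eps)^{|S|}\widehat f(S)^2\le\frac{W_{\le k}(f)}{\Var(f)}+(1-\eps)^{k},\qquad W_{\le k}(f):=\!\!\sum_{0<|S|\le k}\!\!\widehat f(S)^2 .
\]
So it is enough to show $W_{\le k}(f_n)=o(\Var(T_n))$ for each fixed $k$; letting $n\to\infty$ and then $k\to\infty$ (legitimate as $\eps>0$) finishes the proof.

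\paragraph{A real-valued BKS inequality.}
To control the low-frequency weight I would prove a real-valued analogue of the BKS influence theorem. Applying $(2,q)$-hypercontractivity for $\mathrm{Bernoulli}(p)$ — valid for a fixed pair $\rho=\rho(p)\in(0,1)$ and $q=q(p)\in(1,2)$, with $q=q(p)$ as close to $1$ as desired when $\rho=\rho(p)$ is taken small — to the discrete derivatives $g_{v,i}:=D_{v,i}f$, together with the identity $\sum_{v,i}\|T_\rho g_{v,i}\|_2^2=\sum_S|S|\rho^{2(|S|-1)}\widehat f(S)^2$, gives, with $1+s:=2/q$,
\[
   W_{\le k}(f)\le\rho^{-2(k-1)}\sum_{v,i}\|g_{v,i}\|_q^{2}=\rho^{-2(k-1)}\sum_{v,i}\big(\E\,|g_{v,i}|^{q}\big)^{1+s}.
\]
The improvement over the crude bound $\|g_{v,i}\|_q^2\le\|g_{v,i}\|_2^2=\Inf_{v,i}(f)$ comes from the fact that $g_{v,i}$ is supported on a small event: changing a single weight $\omega_v$ by $\Delta$ changes $T_n$ by at most $\Delta$, and by anything at all only if $v$ is on a geodesic for the larger of the two values of $\omega_v$ (monotonicity of $\{v\in\pi_n\}$ in $\omega_v$), while resampling $X_{v,i}$ changes $\omega_v$ only when $X_{v,j}=0$ for all $j<i$, in which case $\omega_v$ grows by a geometric amount $1+G$. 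Combining these with the independence of $\omega_v$ from the other weights, and with $i\le M$, yields a bound of the form $\E\,|g_{v,i}|^q\lesssim_p(1-p)^i\,\E\big[(1+G)^q\,h_v(i+1+G)\big]$ with $h_v(w):=\P(v\in\pi_n\mid\omega_v=w)$.

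\paragraph{Geodesic estimates and the count.}
The substantive ingredient — proved via the integrable structure of geometric last-passage percolation, and of independent interest — is a package of estimates on the probability that a vertex lies on a geodesic, roughly: (i) a uniform upper bound $\P(v\in\pi_n)\le n^{o(1)}(m')^{-2/3}$, with $m':=\min(v_1+v_2,\,2n-v_1-v_2)$ the distance of $v$ from the nearer endpoint (this is a KPZ-scale assertion — a Gaussian $(m')^{-1/2}$ bound would not suffice below); (ii) the complementary bound $\sum_{v:\,v_1+v_2=m}\P(v\in\pi_n)\le n^{o(1)}$, i.e.\ the geodesic bundle meets each anti-diagonal in only polylogarithmically many vertices; and (iii) a stability estimate $h_v(w)\le n^{o(1)}\P(v\in\pi_n)$ for $w\le n^{o(1)}$, saying that a polylogarithmic increase of $\omega_v$ does not inflate $\P(v\in\pi_n)$. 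Granting these, $\E\,|g_{v,i}|^q\lesssim_p n^{o(1)}(1-p)^i\P(v\in\pi_n)$, and H\"older's inequality gives $\sum_{v:\,v_1+v_2=m}\P(v\in\pi_n)^{1+s}\le(\max_v\P(v\in\pi_n))^{s}\sum_v\P(v\in\pi_n)\le n^{o(1)}(m')^{-2s/3}$; summing over the $2n-1$ anti-diagonals and over $i$, and taking $\rho(p)$ small enough that $s>\tfrac12$, one gets $W_{\le k}(f_n)\lesssim_{k,p}n^{o(1)}\sum_{m=1}^{2n-1}(m')^{-2s/3}\lesssim n^{\theta+o(1)}$ for some $\theta<\tfrac23$. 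Since $\Var(T_n)\asymp n^{2/3}$ by Johansson~\cite{joh00}, this forces $W_{\le k}(f_n)/\Var(T_n)\to0$ for each fixed $k$, proving the theorem.

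\paragraph{Main obstacle.}
The crux is the package of geodesic estimates (i)--(iii) — in particular the sharp $(m')^{-2/3}$ concentration of the geodesic and the stability of the event $v\notin\pi_n$ under a polylogarithmic increase of $\omega_v$ — which is exactly where the integrable side of the model must be exploited; the real-valued BKS inequality is by comparison soft, the only subtlety being the hypercontractive constant for the biased $\mathrm{Bernoulli}(p)$ measure.
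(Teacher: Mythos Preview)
Your overall strategy coincides with the paper's: encode the geometric weights by Bernoulli bits, prove a real-valued BKS-type inequality via hypercontractivity, feed in sharp bounds on $\P(v\in\pi_n)$, and compare with $\Var(T_n)\asymp n^{2/3}$. Your Fourier/level-$k$ version of BKS is a legitimate variant of the paper's semigroup formulation (Theorem~\ref{th:genbks}); your estimates (i) and (ii) are essentially Theorem~\ref{th:probageod} (with (ii) following from it after splitting the anti-diagonal into a ``close'' strip of width $m^{2/3+\delta}$ and its complement).

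The one genuine gap is your ingredient (iii), the stability estimate $h_v(w)\le n^{o(1)}\P(v\in\pi_n)$ for $w\le C\log n$. This is \emph{not} proved in the paper, and the trivial bound $h_v(w)\le(1-p)^{-w}\P(v\in\pi_n)$ only gives a factor $n^{\gamma}$ with $\gamma=C\log\frac{1}{1-p}$; since truncation forces $\gamma>2$ (you need $(n+1)^2(1-p)^{M}\to0$), this is far too weak to close your count, which tolerates at most $\gamma<1/6$. Proving (iii) directly would require a separate local-fluctuation input that the paper does not develop.

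The paper sidesteps (iii) entirely, and you can too, in either of two ways. The paper's route (Lemma~\ref{lem: bounding sum of influences}) is to work with $\nabla_{v,i}f=\E^{\xi}[f\circ\sigma_{v,i}^{\xi}]-f$, where $\xi\sim\mathrm{Ber}(p)$ is independent: then $\sigma_{v,i}^{\xi}(\omega)$ has the \emph{same} law as $\omega$, so after a single H\"older step one gets $\I_{v,i}(T_n)\lesssim C_i\,\P(v\in\pi_n)^{1-\delta/2}$ with $C_i\lesssim(1-p)^{i\delta/2}$ and no conditioning on $\omega_v$. Equivalently, in your setup, apply H\"older inside $\E[(1+G)^q h_v(i{+}1{+}G)]$ to separate $(1+G)^q$ from $h_v$, observe $h_v\le1$ so $(\E h_v^{b})^{1/b}\le(\E h_v)^{1/b}$, and note that $\E_G[h_v(i{+}1{+}G)]=\P(v\in\pi_n\mid\omega_v>i)\le\P(v\in\pi_n)\,(1-p)^{-(i+1)}$. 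This yields $\|g_{v,i}\|_q^{2}\lesssim(1-p)^{2i/(qa)}\P(v\in\pi_n)^{2/(qb)}$, and choosing $q,b$ just above $1$ gives an exponent $2/(qb)$ as close to $2$ as you like, after which your anti-diagonal count goes through exactly as in the paper (exponent $>3/2$ suffices). With this fix, your proof is complete and matches the paper's; without it, (iii) is the missing step.
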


We shall proceed below with an outline of the method of proof, and the properties of last-passage percolation needed for the method to work. We shall also comment upon the fact that when extending the notion of noise from a Boolean to a more general context, there are various ways in which this can be done; we show that two natural ways this can be done are roughly equivalent. We end the introduction with a brief discussion of related work on the concept of chaos, and why this work suggests that the transition from stability to noise sensitivity is expected to occur at $t\asymp n^{-1/3}$.
Our proof of Theorem~\ref{th:nsenoftraveltimes} shows that the conclusion of the theorem holds as long as $t\gg1/\log n$.

\subsection{Noise sensitivity and influences}

Noise sensitivity was introduced by Benjamini, Kalai and Schramm~\cite{benkalsch99} in the context of Boolean functions $f:\{0,1\}^n\to\{0,1\}$, but extends straightforwardly to square-integrable functions $f:\{0,1\}^\N\to\R$.
To define things properly, let $X=(X_i)_{i \in \N}$ be a family of independent Bernoulli random variables of parameter $p$, and let $X' = (X'_i)_{i \in \N}$ be an independent copy of $X$. Moreover, let $U_i$ be exponentially distributed with parameter $1$, and define, for $t \geq 0$, a perturbation $X^t$ of $X$ as
\begin{equation}\label{def:noise}
X^t_i := \left\{ \begin{aligned}
X_i &&& \text{if }t< U_i;\\
X'_i &&& \text{if }t\ge U_i.
\end{aligned}
\right.
\end{equation}
We say that a sequence $(f_n)_{n\ge1}$ of square-integrable functions $f_n:\{0,1\}^\N\to\R$ is {\bf noise sensitive} if for every $t>0$ we have
\begin{equation}\label{def:sensitivity}
\lim_{n \rightarrow \infty} \Corr \big(f_n(X), f_n(X^t) \big) = 0 .
\end{equation}
Note that the perturbed configuration $X^t$ is obtained from $X$ by resampling each bit independently with probability $\eps=1-e^{-t}$. We occasionally refer to the parameter $t$ as the noise strength. The reason for this parametrization is that $t$ will later be interpreted as time, as $(X^t)_{t \geq 0}$ is recast as a time-homogeneous Markov process under that parametrization. Note also that noise sensitivity as expressed in the context of last-passage percolation in Theorem~\ref{th:nsenoftraveltimes} is equivalent to noise sensitivity as defined in~\eqref{def:noise}, via the Bernoulli encoding of geometric variables.



It appears, through the identity
$$
\Var(f) - \Cov\big(f(X), f(X^t)\big) = \frac{1}{2} \E\big[ \big(f(X) - f(X^t)\big)^2\big],
$$
that noise sensitivity of $(f_n)_{n\ge1}$ means that,
in the limit as $n \rightarrow \infty$, $f_n(X^t)$ is not even slightly more concentrated around $f_n(X)$ than an independent copy, contrary to what one could expect for small values of $t$, for which $X$ and $X^t$ are very similar. Note that in the case where the $f_n$ are Boolean functions of variance bounded away from zero (such as planar percolation crossings at criticality), noise sensitivity as defined above coincides with noise sensitivity in the sense of~\eqref{def:bksns}.

An important quantity in the study of noise sensitivity is the influence of bits. The relevance of influences to tackle noise sensitivity problems can be conceived via the heuristic that, in order to understand the effect of randomly changing the values of a tiny proportion of bits, it would be useful to investigate the effect of changing exactly one bit. In the context of Boolean functions, the influence of bit $i$ is defined as the probability that changing the input coordinate $i$ changes the output of the function. In extending this notion to functions taking values in $\R$, it seems reasonable to also take the amount of change into account. 
For $i \in \N$, we define the {\bf influence} of $i$ on $f : \{0, 1\}^{\N} \rightarrow \R$ as
\begin{equation}\label{def:influence}
\I_i(f) := \E \big[ \big|\E \big[ f(X) \big| (X_j)_{j \neq i} \big] - f(X) \big| \big] .
\end{equation}
For Boolean functions, the definition of influences given in~\eqref{def:influence} coincides with the classical definition up to a factor $2p(1-p)$, as explained in Section~\ref{se:nsen}. For real-valued functions, the definition is analogous to that used in~\cite{ivazha24, cardon25}, but differs from the $L^2$-notion used e.g.\ in~\cite{ahldeisfr1,ahldeisfr24}.

The first result justifying the relevance of the heuristic that ``in order to understand the effect of randomly changing the values of a tiny proportion of bits, it would be useful to investigate the effect of changing each bit individually'' is due to Benjamini, Kalai and Schramm~\cite{benkalsch99}. The authors famously proved that a sufficient condition for a sequence $(f_n)_{n\ge1}$ of Boolean functions, each depending on finitely many coordinates, to be noise sensitive is that 
\begin{equation}\label{eq:infsquared}
\lim_{n \rightarrow \infty} \sum_{i = 1}^{\infty} \I_i(f_n)^2 = 0.
\end{equation}
This result has come to be referred to as the BKS theorem. The authors further noted that for monotone functions, the condition in~\eqref{eq:infsquared} is also necessary. In many settings in which noise sensitivity has been studied -- including Bernoulli percolation, but also first- and last-passage percolation -- key observables correspond to monotone functions. As a consequence, the condition in~\eqref{eq:infsquared} is of fundamental importance, since it is not possible to establish noise sensitivity without, at least indirectly, also establishing~\eqref{eq:infsquared}.

In order to prove Theorem~\ref{th:nsenoftraveltimes}, we extend the BKS theorem, from~\cite{benkalsch99}, to real-valued function on the Boolean cube. Our result is quantitative, and applies to pairs of functions from $\{0, 1\}^{\N}$ to $\R$.

\begin{theorem} \label{th:genbks}
For all $p \in (0, 1)$ and $t > 0$ there exists $\theta = \theta(p, t) > 0$ such that for all functions $f, g$ from $\{0, 1\}^{\N}$ to $\R$ having finite variance, we have
\[
\Cov \big( f(X), g(X^t) \big) \leq \sqrt{\Var(f) \Var(g)}^{1 - \theta} \bigg(\sum_{i \in \N} \I_i(f) \I_i(g) \bigg)^{\theta} .
\]
The statement holds for $\theta(p,t)=\tanh(\rho t/2)$, where $\rho=1$ for $p=1/2$ and $\rho=2\frac{2p-1}{\log(p) - \log(1 - p)}$ otherwise.
\end{theorem}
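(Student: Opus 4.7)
The plan is to adapt the Fourier/hypercontractive proof of the classical BKS theorem to the present setting, which differs from the Boolean-valued, unbiased case in two respects: (i) the Bernoulli measure is biased with parameter $p$, which changes the log-Sobolev constant to $\rho(p)$; and (ii) we work with real-valued functions and $L^1$-influences $\I_i$ rather than $L^\infty$-bounded functions with Boolean influences. Let $\{\chi_S\}_{S\subset\N}$ be the orthonormal Walsh basis of $L^2(\mu_p^{\otimes\N})$, where $\mu_p$ is Bernoulli$(p)$. The noise semigroup $P_t h(x):=\E[h(X^t)\mid X=x]$ is diagonal with $P_t\chi_S=e^{-t|S|}\chi_S$ (a single bit survives resampling by time $t$ with probability $e^{-t}$), yielding the spectral identity
\[
\Cov\big(f(X),g(X^t)\big)=\sum_{S\neq\emptyset}e^{-t|S|}\hat f(S)\hat g(S).
\]

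The precise exponent $\theta=\tanh(\rho t/2)$ emerges from the interplay of two classical facts. First, the biased two-point space has log-Sobolev constant $\rho(p)$ matching the stated formula; tensorisation and Gross's theorem then give the hypercontractive bound $\|P_{t/2}h\|_2\leq\|h\|_{1+e^{-\rho t}}$ on $L^2(\mu_p^{\otimes\N})$. Second, log-convexity of $L^p$-norms gives
\[
\|h\|_{1+e^{-\rho t}}\leq\|h\|_1^{\theta}\|h\|_2^{1-\theta},\qquad\theta=\frac{1-e^{-\rho t}}{1+e^{-\rho t}}=\tanh(\rho t/2).
\]
Applied to the discrete derivative $\partial_i h(x):=h(x;x_i{=}1)-h(x;x_i{=}0)$ via the commutation $\partial_i P_{t/2}=e^{-t/2}P^{(\neq i)}_{t/2}\partial_i$, where $P^{(\neq i)}_{t/2}$ is the noise operator on the bits different from $i$, this produces per-coordinate estimates mixing $\|\partial_i f\|_1=\I_i(f)/(2p(1-p))$ with $\|\partial_i f\|_2$.

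Putting the pieces together, I would use the heat-equation identity $\frac{d}{dt}\Cov(f,P_t g)=-p(1-p)\sum_i\E[\partial_i P_{t/2}f\cdot\partial_i P_{t/2}g]$ and integrate from the target time to infinity. Bound each integrand by Cauchy--Schwarz in $x$ applied to $(\partial_i P_{t/2}f,\partial_i P_{t/2}g)$, apply hypercontractivity and log-convexity on each factor to replace the $L^2$-norms of $\partial_i f,\partial_i g$ by powers of their $L^1$- and $L^2$-norms, and close with H\"older in $i$ with conjugate exponents $1/\theta$ and $1/(1-\theta)$ to extract the factor $\bigl(\sum_i\I_i(f)\I_i(g)\bigr)^\theta$. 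This leaves a variance-type factor that must be controlled by $\sqrt{\Var(f)\Var(g)}^{1-\theta}$.

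The main obstacle, as I see it, is precisely that last step. The naive quantity $\sum_i\|\partial_i f\|_2^2$ is the Dirichlet form, which is not bounded by $\Var(f)$ for general real-valued $f$ --- in sharp contrast with the Boolean case, where $\|\partial_i f\|_\infty\leq 1$ short-circuits the interpolation. A natural way out is to spend a positive slice of semigroup time to trade the Dirichlet form for a multiple of the variance, using the elementary estimate $\sum_i\|\partial_i P_s f\|_2^2\leq\Var(f)/(es\,p(1-p))$ (which follows from $\max_{k\geq 1}ke^{-2sk}\leq 1/(2es)$), and then balance this slice against the hypercontractive and log-convex budgets so that the end exponent is the sharp $\tanh(\rho t/2)$ rather than a strictly weaker one. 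This balancing is a delicate bookkeeping exercise, but should not require any conceptually new ingredient beyond what has been assembled above.
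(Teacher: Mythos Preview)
Your overall architecture---integrate the heat equation from $t$ to $\infty$, apply Cauchy--Schwarz per coordinate, use hypercontractivity together with log-convexity of $L^q$-norms to extract the $\tanh(\rho t/2)$ exponent, and close with H\"older in $i$---matches the paper's proof. The ingredient you are missing is precisely how to handle what you call ``the main obstacle'', and the paper's resolution is cleaner than the spectral workaround you propose.

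The point is that the variance factor is \emph{not} obtained by bounding a Dirichlet form. After writing
\[
\Cov(P_tf,P_tg)=2\int_t^\infty\sum_i\E\big[(P_s\nabla_if)(P_s\nabla_ig)\big]\,ds,
\]
you split $P_s=P_t\circ P_{s-t}$ and apply hypercontractivity \emph{only} to the outer $P_t$, keeping the residual time in both the $L^2$ and $L^1$ factors:
\[
\E\big[(P_s\nabla_if)^2\big]\le\E\big[(P_{s-t}\nabla_if)^2\big]^{1-\theta}\,\I_i(P_{s-t}f)^{2\theta}.
\]
After H\"older in $(s,i)$ with exponents $1/(1-\theta)$ and $1/\theta$, the $(1-\theta)$-factor is
\[
2\int_t^\infty\sum_i\sqrt{\E[(P_{s-t}\nabla_if)^2]\,\E[(P_{s-t}\nabla_ig)^2]}\,ds,
\]
which after the substitution $u=s-t$ and one more Cauchy--Schwarz is bounded by $\sqrt{2\int_0^\infty\sum_i\E[(P_u\nabla_if)^2]\,du}\cdot\sqrt{\ldots g\ldots}$. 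But this last integral is \emph{exactly} $\Var(f)$, by the same covariance formula used at the outset (take $f=g$ and $t=0$). No Dirichlet-form-to-variance comparison is needed; the integral representation closes on itself. The $\theta$-factor is then controlled by the exponential decay $\I_i(P_uf)\le Ce^{-u}\I_i(f)$, which makes the time integral converge and yields $\bigl(\sum_i\I_i(f)\I_i(g)\bigr)^\theta$ up to constants.

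Your proposed fix---spend extra semigroup time and use $\sum_i\|\partial_iP_sf\|_2^2\le\Var(f)/(es\,p(1-p))$---is essentially rediscovering this integral identity pointwise in $s$ rather than in aggregate, and the ``delicate balancing'' you anticipate would likely cost you the sharp exponent. The paper's route avoids this entirely: the trick is to recognise that the same formula you differentiate is the one that lets you re-sum to the variance at the end.
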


There exist a few versions of the BKS theorem in the literature in the case of a single function $f$. The original theorem was proved for Boolean functions and uniform measure ($p=1/2$) in~\cite{benkalsch99}. The theorem was extended to biased measures ($p\neq1/2$) independently by Keller and Kindler~\cite{kelkin13} and Ahlberg, Broman, Griffiths and Morris~\cite{ahlbrogrimor14}; the former of which providing a quantitative version. Very recently, and in parallel to the following paper being written, Caravenna and Donadini~\cite{cardon25} proved a version of Theorem~\ref{th:genbks} for noises including the one in~\eqref{def:noise}, with methods different from ours.
In~\cite[Theorem~2.17]{cardon25} the proof uses hypercontractivity estimates in a Fourier framework, inspired by~\cite{kelkin13}. Our proof of Theorem~\ref{th:genbks} combines hypercontractivity and a Markov semigroup framework, closer to related results of van Handel, as outlined in~\cite{roshan20}, and Ivanishvili and Zhang~\cite{ivazha24}. We deviate, however, from these proofs by replacing a martingale argument by the use of a semigroup property. The extension to two functions $f$ and $g$, instead of just one, has not been considered before, and will come to use in forthcoming work.



\subsection{Bounding the influences}

The first evidence of KPZ-scaling was established for the longest increasing subsequence of a random permutation, in work of Baik, Deift and Johansson~\cite{baideijoh99,johansson00b}. Analogous results were soon after obtained for planar last-passage percolation with geometric weights by Johansson~\cite{joh00} and Baik, Deift, McLaughlin, Miller and Zhou~\cite{baideimclmilzho01}.
The approach developed in these works is analytic, exploiting combinatorial identities to enable techniques from the study of random matrices. A few years later, a probabilistic approach was developed by Cator and Groeneboom~\cite{catgro06} and Bal\'azs, Cator and Seppäläinen~\cite{balcatsep06}. The probabilistic approach combines coupling techniques and queueing theory for the study of a stationary version of the last-passage process, revealing remarkable independence and large deviation properties.

KPZ-scaling refers to the scaling of so-called shape fluctuations and transversal fluctuations. For last-passage percolation this entails that the travel time $T_n$ fluctuates of order $n^{1/3}$ around its mean, and the geodesic $\pi_n$ fluctuates transversally of order $n^{2/3}$ away from the line segment connecting its endpoints; see~\cite{joh00,baideimclmilzho01}. In particular, as first proved in~\cite[Corollary~1.3]{baideimclmilzho01}, there exist constants $0<c, C<\infty$ such that for all large $n$ we have
\begin{equation} \label{eqn:variance of Tn}
c n^{2/3} \leq \Var(T_n) \leq C n^{2/3} .
\end{equation}
Due to Theorem~\ref{th:genbks}, completing the proof of Theorem~\ref{th:nsenoftraveltimes} reduces to bounding $\sum_{v\in\Z^2}\sum_{i \in \N} I_{v,i}(T_n)^2$, where $I_{v, i}(T_n)$ is the influence on the travel time $T_n$ of bit $i$ used to encode the geometric variable a vertex $v$, and proving that this quantity is $o(n^{2/3})$ when $n$ tends to infinity.


Note that flipping the variables encoding the weight at a vertex $v$ from zero to one has an effect on the travel time $T_n$ only if that vertex is on some geodesic between the origin and $n\e_+$. On a heuristic level,
this suggests that the sum of influences of the bits associated to a vertex $v$ are roughly proportional to the probability of this vertex being on the geodesic, and hence that
\[
\sum_{v\in\Z^2}\sum_{i\in\N}I_{v,i}(T_n)^2\approx\sum_{v \in \Z^2} \P(v \in \pi_n)^2 ,
\]
where we recall that $\pi_n$ is the set of points that are on some geodesic from the origin to $n\e_+$. Indeed, we shall later see that this is the case, with a small error in the exponent. This reduces the computation of influences to bounds on the probability of a given vertex being on the geodesic.

As mentioned, the geodesic between the origin and $n\e_+$ is known to fluctuate within a range of order $n^{2/3}$ from the straight line connecting the two points. Vertices at distance further than $n^{2/3}$ from the diagonal should therefore contribute very little to the total influence. In addition, one could expect the contribution from vertices within distance $n^{2/3}$ to be roughly equal, and thus roughly of order $n^{-2/3}$.

Given $v = (v_1, v_2) \in \Z^2$, we let $|v|_1 = |v_1| + |v_2|$ denote the $\ell_1$-norm of $v$. We shall present two bounds on the probability of a given vertex being on the geodesic. While both bounds apply to all vertices in the square of side length $n$, the first will be relevant for vertices `far' from the main diagonal, whereas the second will be relevant for vertices `close' to the main diagonal. `Close' here refers to a vertex $v=(v_1,v_2)$ such that $|v_2-v_1|$ being at most order $s|v|_1^{2/3}$ and `far' refers to $|v_2-v_1|$ being at least $s|v|_1^{2/3}$, for some $s$ growing slowly with $|v|_1$. (We later set $s=C(\log|v|_1)^{1/3}$.)

The theorem we prove is the following.


\begin{theorem} \label{th:probageod}
There exists $0<c,C<\infty$ such that for all $n\ge1$ and $v \in [0, n]^2 \cap \Z^2$ such that $|v|_1 \leq n$ then
\begin{enumerate}[label=(\roman*)]
\item
$\displaystyle{
\P\big(v \in \pi(0,n\e_+)\big) \leq C \exp \left( - c \frac{|v_2 - v_1|^3}{|v|_1^2} \right)};
$
\item
$
\displaystyle{\P\big(v \in \pi(0,n\e_+)\big) \leq C \left(\frac{ \log |v|_1}{|v|_1} \right)^{2/3}}.
$
\end{enumerate}
\end{theorem}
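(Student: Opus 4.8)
The plan is to deduce both bounds from known KPZ-type estimates for geometric last-passage percolation, namely exponential upper-tail bounds for travel times combined with the standard ``comparison of travel-time increments'' argument. The key observation is that $v \in \pi(0, n\e_+)$ if and only if there exists a geodesic through $v$, which forces $T(0, n\e_+) = T(0, v) + T(v, n\e_+) - \omega_v$. Writing $\mu(u) := \E[T(0, u)]$ for the mean travel time (which by Johansson's shape theorem satisfies $\mu(u) = \gamma \cdot u + O(|u|_1^{1/3+o(1)})$ for the explicit KPZ shape function $\gamma$, with the Legendre-type concavity $\mu(0,n\e_+) \ge \mu(0,v) + \mu(v, n\e_+)$ whenever $v$ lies strictly off the diagonal direction), one has
\[
\P\big(v \in \pi(0, n\e_+)\big) \le \P\Big( T(0,v) + T(v, n\e_+) \ge T(0, n\e_+) \Big).
\]
The right-hand side is bounded by a sum of three large-deviation events: $T(0,v) \ge \mu(0,v) + \tfrac13 \Delta$, $T(v,n\e_+) \ge \mu(v,n\e_+) + \tfrac13\Delta$, and $T(0,n\e_+) \le \mu(0,n\e_+) - \tfrac13\Delta$, where $\Delta := \mu(0,v) + \mu(v,n\e_+) - \mu(0,n\e_+) \ge 0$ is the ``curvature gap''. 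For part~(i), a curvature estimate for the shape function gives $\Delta \gtrsim |v_2 - v_1|^2 / |v|_1$ when $v$ is far from the diagonal; feeding this into the one-point upper- and lower-tail moderate-deviation bounds for geometric LPP (of the form $\P(T_m \ge \mu_m + r) \le C e^{-c r^{3/2}/\sqrt m}$ and $\P(T_m \le \mu_m - r) \le C e^{-c r^2/m}$, valid in the moderate regime and available from~\cite{joh00, baideimclmilzho01} or from the probabilistic approach of~\cite{balcatsep06}) yields a bound of order $\exp(-c\Delta^2/|v|_1) \lesssim \exp(-c|v_2-v_1|^4/|v|_1^3)$; one then checks that on the regime where this is not yet smaller than $\exp(-c|v_2-v_1|^3/|v|_1^2)$ the cruder bound follows directly (in fact $|v_2-v_1|^4/|v|_1^3 \ge |v_2-v_1|^3/|v|_1^2$ exactly when $|v_2-v_1|\ge|v|_1$, which is automatic here, so part~(i) should come out cleanly, possibly with the stated weaker exponent absorbing lower-order shape-fluctuation errors).

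For part~(ii) the curvature gap is too small to be useful (it is $O(s^2 |v|_1^{1/3})$ for $|v_2-v_1|\asymp s|v|_1^{2/3}$, which does not beat the fluctuation scale once $s$ is only logarithmic), so a different mechanism is needed: here one exploits that a \emph{typical} vertex near the diagonal is on the geodesic with probability of order $n^{-2/3}$ simply by a counting/union argument. Concretely, I would fix a transversal slab at a given ``time'' coordinate $v_1 + v_2 = k$, note that the geodesic crosses this slab at exactly one (or $O(1)$) vertices, so $\sum_{v : v_1+v_2 = k} \P(v \in \pi_n) = O(1)$; then the content of~(ii) is an \emph{equidistribution} statement saying no single vertex in the central window of width $\asymp (k \wedge (2n-k))^{2/3}$ carries substantially more than its share. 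The logarithmic correction $(\log|v|_1/|v|_1)^{2/3}$ strongly suggests the proof goes via a union bound over $\asymp \log$ many dyadic scales together with the transversal-fluctuation estimate controlling the probability that the geodesic exits a tube of width $\asymp (\log |v|_1)^{1/3}|v|_1^{2/3}$ around the diagonal (this is exactly the ``$s = C(\log|v|_1)^{1/3}$'' choice flagged in the text): outside that tube part~(i) gives a summable bound, and inside one again uses the slab-counting bound $O(1)$ divided by the width of the tube, $\asymp (\log|v|_1)^{1/3}|v|_1^{2/3}$, which after squaring the logarithm appropriately produces the $(\log|v|_1/|v|_1)^{2/3}$ rate.

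The main obstacle I expect is making the slab-counting/equidistribution argument for~(ii) rigorous: the naive statement ``the geodesic crosses each slab once, so probabilities sum to $O(1)$'' does not by itself prevent mass concentrating on a single vertex, so one genuinely needs a local comparison showing that $\P(v \in \pi_n)$ and $\P(v' \in \pi_n)$ are comparable for neighbouring $v, v'$ in the central window --- or, alternatively, an averaging argument showing $\P(v\in\pi_n) \le \frac{C}{w}\sum_{v' \text{ in window of width } w}\P(v'\in\pi_n)$ up to the logarithmic loss. This should be achievable by a resampling/coupling argument (shifting the weight configuration, or rerouting geodesics through a nearby vertex at bounded cost using one-point tail bounds), but controlling the error terms --- in particular ensuring that the shape-function correction terms $O(|v|_1^{1/3+o(1)})$ do not degrade the exponent in~(i), and that the number of dyadic scales feeding into the union bound is genuinely only $O(\log|v|_1)$ --- is where the real work lies. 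I would also keep in mind that the two bounds are stated uniformly over $v \in [0,n]^2$ with $|v|_1 \le n$, so all constants must be taken independent of how close $v$ is to the far corner $n\e_+$; this is handled by the same three-event decomposition since $T(v, n\e_+)$ is itself a travel time over a box of size $\le n$.
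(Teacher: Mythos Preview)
Your approach differs from the paper's for both parts, and part~(ii) has a genuine gap.

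\medskip

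\textbf{Part~(i).} The curvature-plus-one-point-bounds route is a known alternative, but your implementation slips in two places. First, the claim that ``$|v_2-v_1|\ge|v|_1$ is automatic here'' is false: for $v\in\Z_{\ge0}^2$ one always has $|v_2-v_1|\le|v|_1$, so the inequality $|v_2-v_1|^4/|v|_1^3 \ge |v_2-v_1|^3/|v|_1^2$ goes the \emph{wrong} way and your bound is weaker than the target, not stronger. Second, the lower-tail event concerns $T(0,n\e_+)$, whose scale is $n$ rather than $|v|_1$; when $|v|_1\ll n$ the resulting exponent $\Delta^3/n$ (or $\Delta^2/n$ in your formulation) is much smaller than $|v_2-v_1|^3/|v|_1^2$. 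This can be repaired (e.g.\ by comparing against the travel time through the diagonal point at the same $\ell_1$-level instead of $T(0,n\e_+)$ itself), but you do not do so. The paper avoids all of this: it never touches one-point travel-time tails, and instead compares $\pi(0,n\e_+)$ directly with the backward stationary $\lambda$-geodesic $\pi(\lambda;-\infty,0)$ for a well-chosen $\lambda$, using the ordering Lemma~\ref{monoofgeod} and the exit-point large-deviation bound (Theorem~\ref{backgeodlargedev}) from~\cite{grojanras25}. This controls the geodesic's location geometrically, bypassing the curvature-gap decomposition entirely.

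\medskip

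\textbf{Part~(ii).} Here your proposal has a real hole. You correctly identify that the slab identity $\sum_{|v|_1=k}\P(v\in\pi_n)=O(1)$ does not by itself prevent mass concentrating on one vertex, and that the actual content of~(ii) is an equidistribution statement over a window of width $\asymp(\log|v|_1)^{1/3}|v|_1^{2/3}$. But the ``resampling/coupling/shift'' mechanism you suggest for this is not supplied, and it is not clear any such argument works: translation does not preserve the endpoints $0$ and $n\e_+$, and a local surgery comparing $\P(v\in\pi_n)$ with $\P(v'\in\pi_n)$ for $|v-v'|\asymp|v|_1^{2/3}$ would itself need control at the fluctuation scale, which is precisely what is being proved. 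The paper does something entirely different and does not use equidistribution at all. It rewrites the event $\{0,\e_1\in\pi(-v,n\e_+-v)\}$ as $\{D_i\ge0\text{ for all }i\}$, where $D_i$ is the loss from crossing the vertical axis at height $i$ instead of $0$; it then sandwiches the increments $\Delta_j,\Delta_j'$ of $D_i$ by the i.i.d.\ boundary weights $\omega^V_{j\e_2}(\lambda^\pm)$ of two nearby stationary models (Lemmas~\ref{lem: bound the difference of times}--\ref{lem: another bound on the difference of times}), so that $(D_i)$ is dominated by a genuine random walk with drift $O(s|v|_1^{-1/3})$. A ballot-type estimate (Proposition~\ref{prop: random walk}) then gives $\P(D_i\ge0\text{ for }|i|\le k)\lesssim(k^{-1/2}+\mu)^2\asymp s^2|v|_1^{-2/3}$ for $k\asymp s|v|_1^{2/3}$; choosing $s\asymp(\log|v|_1)^{1/3}$ and combining with part~(i) outside the window yields~(ii). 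The random-walk representation is the key idea you are missing.
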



While the correct order of fluctuations have been known since the work of Baik, Deift and Johansson~\cite{baideijoh99,johansson00b}, refinements of these methods able to produce upper bounds on the probability of visiting a given vertex are more recent.
We will derive the first part of Theorem~\ref{th:probageod} from known large deviation bounds on so-called exit points in stationary last-passage percolation. Such bounds were obtained for exponential last-passage percolation by Basu, Ganguly and Zhang~\cite{basganzha21} via the analytic approach, and by Elnur, Janjigian and Seppäläinen~\cite{elnjansep23} via coupling techniques of the probabilistic approach. The latter was extended to geometric last-passage percolation by Groathouse, Janjigian and Rassoul-Agha~\cite{grojanras25}. We deduce part~\emph{(i)} of Theorem~\ref{th:probageod} from Theorem~\ref{backgeodlargedev}, which is a slight reformulation of a theorem from~\cite{grojanras25}.

The second part of Theorem~\ref{th:probageod} requires a careful analysis of the probability of a given vertex close to the line segment connecting the origin to $n\e_+$ being on the geodesic. 
Basu, Hoffman and Sly~\cite{bashofsly22} obtained bounds of this type from coalescence estimates obtained via the analytic method.
Shortly after, Bal\'azs, Busani and Seppäläinen~\cite{balbussep20} obtained similar bounds using the coupling technique of stationary last-passage percolation processes. This technique consists in reducing the calculation of the probability of a vertex close to the main diagonal being on a geodesic, to a calculation of a random walk staying above zero for a given number of steps. We shall here opt for this more probabilistic approach.

In~\cite{balbussep20}, the authors considered exponential last-passage percolation. Their approach was later extended to geometric last-passage percolation by Groathouse, Janjigian and Rassoul-Agha~\cite{grojanras25}. Although the precise statement of the above theorem is new, these works obtained versions of this theorem. Our Theorem~\ref{th:probageod} refines the estimates in these papers, and extends them to cover the full $n\times n$ square, as opposed to merely apply for vertices in the centre of the square, as in all of the above mentioned works. Indeed, it will be crucial to obtain bounds for vertices also close to the corners in order to bound the sum of influences, and hence prove Theorem~\ref{th:nsenoftraveltimes}.

\begin{remark}
We have, in Theorem~\ref{th:probageod}, considered diagonal crossings of squares only. However, the same proof extends to diagonal crossings of rectangles. More precisely, it is possible to show that for every $\delta>0$ there exists constants $0<c,C<\infty$ such that for all $0\le v\le u$ such that $u_2/u_1\in(\delta,1/\delta)$ and $|v|_1 \leq \frac{1}{2}|u|_1$ we have
\begin{enumerate}[label=(\roman*)]
\item
$\displaystyle{
\P\big(v \in \pi(0,u)\big) \leq C \exp \left( - c \frac{|v_2 - v_1u_2/u_1|^3}{|v|_1^2} \right)};
$
\item
$
\displaystyle{\P\big(v \in \pi(0,u)\big) \leq C \left(\frac{ \log |v|_1}{|v|_1} \right)^{2/3}}.
$
\end{enumerate}
\end{remark}

\begin{remark}
As an immediate corollary of Theorem~\ref{th:probageod} we obtain the well-known bound on the transversal fluctuation exponent $\xi=2/3$; see Corollary~\ref{cor:xi} for the precise statement.
This result is, of course, well-known from~\cite{joh00,baideimclmilzho01}. The novelty here is the observation that the lower bound $\xi\ge2/3$ can be obtained via the coupling method of stationary last-passage times, avoiding the use of integral probability. (In~\cite[Remark~5.4]{sep18} this was stated as unknown.) It is possible that this observation has been made before, but it seems plausible that the observation has not yet appeared in print.
\end{remark}

\subsection{Discussion of the notion of noise}

When perturbing a uniformly chosen element in $\{0,1\}^n$, which acts independently and homogeneously on the coordinates, there is only one choice for a given noise strength. In the presence of an underlying geometry, other notions of perturbation may be natural, such as the exclusion dynamics studied in~\cite{brogarste13}, but these would not preserve the independence between coordinates.

When extending the notion of a perturbation beyond the Boolean setting, there are various ways to do this, maintaining the independence between coordinates. We have in Theorem~\ref{th:nsenoftraveltimes} opted for a Bernoulli encoding of the geometric variables, and perturbed the configuration by resampling a small proportion of the bits in this encoding, much like in the Boolean setting. In the discussion that follows, we will refer to this perturbation, defined in~\eqref{eq:bitresampling}, as the {\bf bit-resampling noise}.

Since geometric random variables are usually introduced in the context of Bernoulli trials, the bit-resampling noise becomes a natural notion of noise in this setting. For other specific distributions, other notions of noise may seem natural. For instance, in the case of the Gaussian distribution, a notion of noise considered in the literature is by updating the Gaussian variables independently according to Ornstein-Uhlenbeck processes; see e.g.~\cite{cha14}. The bit-resampling noise and the Ornstein-Uhlenbeck dynamics have in common that the corresponding Markov semi-group is hypercontractive, enabling an analysis of noise sensitivity via influences, as in Theorem~\ref{th:genbks}; see Theorem~\ref{th:gaussbks}.

For other distributions, such as the exponential, under which last-passage percolation remains exactly solvable, these notions of noise may also be applied. Indeed, any real-valued random variable can be encoded using a countable family of Bernoulli random variables. Moreover, an exponential can be expressed as the sum of two independent Gaussian variables squared. Hence, in an implicit way, Theorem~\ref{th:genbks} does not only loosen the condition on the codomain of the functions under consideration, but also on the domain.
Arguably, this generalisation of the input space is only in appearance, as we are limited to consider noise sensitivity with respect to the bit-resampling noise applied via an abstract encoding, which for most distributions would feel artificial.


In a setting like last-passage percolation, where weights are assigned to the sites of the square lattice, another candidate for noise is to resample each weight entirely with a small probability. This notion has the advantage of remaining equally natural for distributions other than the geometric one. 
Let $(\omg_v)_{v \in \Z^2}$ and $(\omg'_v)_{v \in \Z^2}$ be two independent collections of independent geometrically distributed random variables, with parameter $p\in(0,1)$. Let also $(U_v)_{v\in\Z^2}$ be exponentially distributed with mean 1, and independent from the remaining variables. We obtain a perturbation $\tilde\omega^t=(\tilde{\omg}^t_v)_{v\in\Z^2}$ of $\omega=(\omg_v)_{v \in \Z^2}$ via
\begin{equation}\label{eq:siteresamping}
\tilde{\omg}_t :=
\left\{
\begin{aligned}
	\omg_v &&& \text{if } t<U_v; \\
	\omg'_v &&& \text{if } t\ge U_v.
\end{aligned}
\right.
\end{equation}
We refer to the perturbation in~\eqref{eq:siteresamping} as the \textbf{site-resampling noise}.

One would expect that the property of being noise sensitive should not depend on the specific notion of noise under consideration.
However, the site-resampling noise is (in general) not hypercontractive, so trying to extend our Theorem~\ref{th:genbks} to site-resampling of the geometric distribution fails; see Section~\ref{se:geometric resampling not hyperc}. Consequently, it remains an open problem to establish noise sensitivity of geometric last-passage percolation with respect to the site-resampling noise.

We next state a result which provides a comparison between the bit-resampling noise defined in~\eqref{eq:bitresampling} and the site-resampling noise defined in~\eqref{eq:siteresamping}. This shows that our approach just barely fails to establish noise sensitivity also with respect to the site-resampling noise.


\begin{proposition} \label{prop:couplingargument}
For every $p\in(0,1)$ there exists a constant $C > 0$ such that for every sequence $(t_n)_{n \in \N}$ such that $0<t_n\le1/(C\log n)$, we have that if
\[
\lim_{n \rightarrow \infty} \Corr \big(T_n(\omg), T_n(\omg^{t_n}) \big) = 0 ,
\]
then
\[
\lim_{n \rightarrow \infty} \Corr \big( T_n(\omg), T_n(\tilde{\omg}^{C \,t_n\log n}) \big) = 0 .
\]
\end{proposition}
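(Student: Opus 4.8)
The plan is to pass to a \emph{truncated} weight configuration, on which both noises act on a bounded state space, and then to compare the two noises at the level of their Markov semigroups. Fix $p\in(0,1)$, let $m=m_n:=\lceil C'\log n\rceil$ with $C'=C'(p)$ to be chosen, write $s_n:=C t_n\log n$, and for a configuration $\omega$ set $\omega^{\wedge}:=(\omega_v\wedge m)_{v\in\Z^2}$ and $T_n^{\wedge}(\omega):=T_n(\omega^{\wedge})$. Since $\omega_v\wedge m$ depends only on the bits $X_{v,0},\dots,X_{v,m-1}$, one first checks that the coordinate $\omega_v\wedge m$ is strongly lumpable under the bit-resampling dynamics, so that ``truncate then perturb'' agrees with ``perturb then truncate'': the map $\omega_v\wedge m\mapsto\omega^{t_n}_v\wedge m$ is a reversible Markov transition on $\{0,1,\dots,m\}$, and likewise $\omega_v\wedge m\mapsto\tilde\omega^{s_n}_v\wedge m$ is the transition that replaces the truncated weight by an independent fresh copy with probability $1-e^{-s_n}$.

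First I would carry out the truncation reduction. Choosing $C'$ so large that $(n+1)^2(1-p)^{m}\le n^{-10}$, the event that all weights of $\omega$, of $\omega^{t_n}$, and of $\tilde\omega^{s_n}$ inside $[0,n]^2$ are at most $m$ has probability at least $1-3n^{-10}$, and on it $T_n=T_n^{\wedge}$. Since every travel time across the $n\times n$ square is at most $(2n+1)$ times the maximal weight there, and the fourth moment of that maximum is $O((\log n)^4)$, a Cauchy--Schwarz estimate gives $\E[(T_n-T_n^{\wedge})^2]=o(1)$, and the same for the two perturbed configurations. Consequently $\Var(T_n)=\Var(T_n^{\wedge})+o(1)$, and $\Cov(T_n(\omg),T_n(\omg^{t_n}))$ and $\Cov(T_n(\omg),T_n(\tilde\omg^{s_n}))$ differ from their truncated analogues by $o(1)$. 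Because $\Var(T_n)\ge c n^{2/3}\to\infty$ by~\eqref{eqn:variance of Tn}, all these $o(1)$ errors disappear upon dividing by $\Var(T_n)$, so it suffices to prove $\Cov\big(T_n^{\wedge}(\omg),T_n^{\wedge}(\tilde\omg^{s_n})\big)\le\Cov\big(T_n^{\wedge}(\omg),T_n^{\wedge}(\omg^{t_n})\big)$, together with the (automatic) non-negativity $\Corr(T_n(\omg),T_n(\tilde\omg^{s_n}))\ge0$, which holds because the site-resampling semigroup $e^{-s}\Id+(1-e^{-s})\Pi$ is positive semidefinite.

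The heart of the argument is a single-site spectral comparison. The bit-resampling dynamics on $\{0,1\}^{m}$ refreshes each of the $m$ bits independently at rate $1$; its generator is diagonalised by the Walsh monomials, so the semigroup has eigenvalues $\{e^{-kt}:0\le k\le m\}$. Restricting to the invariant subspace of functions of $\omega_v\wedge m$ (invariance being exactly the lumpability above), the bit-resampling semigroup $P_{t}$ on the truncated weight has all of its non-constant eigenvalues in $[e^{-mt},1)$. The site-resampling semigroup $\tilde P_{s}$, on the other hand, has eigenvalue $1$ on constants and $e^{-s}$ on the entire orthogonal complement. Hence, as soon as $s_n\ge m\,t_n$ — which holds once $C$ exceeds $C'$ and $n$ is large — we have $e^{-s_n}\le e^{-m t_n}\le(\text{every non-constant eigenvalue of }P_{t_n})$, and so, decomposing an arbitrary $g\in L^2$ into its constant part plus a mean-zero part, $\langle g,\tilde P_{s_n}g\rangle\le\langle g,P_{t_n}g\rangle$, i.e.\ $\tilde P_{s_n}\preceq P_{t_n}$ as quadratic forms on the single-site space. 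All operators being positive semidefinite, this inequality tensorises over the sites $v\in[0,n]^2$, and applying the tensorised inequality to the centred function $T_n^{\wedge}-\E T_n^{\wedge}$ gives exactly $\Cov(T_n^{\wedge}(\omg),T_n^{\wedge}(\tilde\omg^{s_n}))\le\Cov(T_n^{\wedge}(\omg),T_n^{\wedge}(\omg^{t_n}))$, which completes the proof.

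The main difficulty, and the reason a factor $\log n$ is forced into the statement, is that on the full line $\N$ the bit-resampling semigroup has \emph{unbounded} spectrum: the rate at which $\omega_v$ changes is of order $\omega_v$, so there is no fixed time $s$ with $\tilde P_s\preceq P_t$, and in particular no honest Markov coupling realising $\tilde\omg^{s}$ as a further perturbation of $\omg^{t}$. Truncating at level $m\asymp\log n$ — legitimate precisely because the largest weight in the square is of that order — caps the relevant rate at $m t_n\asymp t_n\log n$, which is exactly why the site-resampling must be run for time $Ct_n\log n$ to dominate bit-resampling run for time $t_n$; the hypothesis $t_n\le1/(C\log n)$ then merely keeps $s_n\le1$. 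The one piece of bookkeeping that must be done carefully is the strong-lumpability check, which is what legitimises interchanging truncation and perturbation and guarantees that the truncated bit-noise is a genuine reversible Markov semigroup on $\{0,\dots,m\}$ whose spectrum one can read off from that of the Boolean cube $\{0,1\}^m$.
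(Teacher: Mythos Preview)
Your approach is genuinely different from the paper's and essentially correct, but one of your claims is false and should simply be removed.

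The lumpability assertion does not hold. Take $m=3$ and the two configurations $x=(1,0,0)$ and $y=(1,0,1)$, which both have $\omega\wedge m=0$; after running bit-resampling for any time $t>0$, the conditional probability that the truncated weight equals $2$ (i.e.\ that $X^t=(0,0,1)$) differs between these starting points, because bit $2$ evolves from different initial values. Hence the subspace of functions of $\omega_v\wedge m$ is \emph{not} $P_t$-invariant, and the map $\omega_v\wedge m\mapsto\omega_v^{t}\wedge m$ is not a Markov transition on $\{0,\dots,m\}$.

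Fortunately you do not need lumpability. Simply view $T_n^{\wedge}$ as a function on the full cube $\{0,1\}^{[0,n]^2\times\{0,\dots,m-1\}}$, which it is since $\omega_v\wedge m$ depends only on the first $m$ bits at $v$. On that cube both semigroups are diagonal in the $p$-biased Walsh basis: $P_t$ has eigenvalue $e^{-|S|t}$ on $\chi_S$, while the block-resampling $Q_s$ (resample all $m$ bits at a site simultaneously at rate $1$, which indeed realises the joint law of $(\omega_v\wedge m,\tilde\omega^s_v\wedge m)$) has eigenvalue $e^{-s\,|\{v:S_v\neq\emptyset\}|}$. Since $|S|\le m\,|\{v:S_v\neq\emptyset\}|$, the operator inequality $Q_{s}\preceq P_t$ on the whole mean-zero subspace follows from $s\ge mt$, with no restriction to any invariant subspace; your tensorisation paragraph is then exactly right. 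A minor bookkeeping point: the covariance errors coming from truncation are $o(n^{1/3})$, not $o(1)$, but as you only need $o(\Var(T_n))=o(n^{2/3})$ this is harmless.

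By contrast, the paper's proof is combinatorial rather than spectral. It builds both noises on a common probability space via $\tilde U_v:=M\min_{i<M}U_{v,i}$, so that whenever a bit $(v,i)$ with $i<M$ has been resampled by time $t$ under bit-resampling, every such bit at $v$ has been resampled by time $Mt$ under site-resampling; the covariance inequality then follows from the elementary monotonicity lemma (resampling a larger set can only decrease covariance, via conditional Jensen). Your spectral route replaces this explicit coupling by a direct eigenvalue comparison; it is cleaner once the Walsh decomposition is in hand, while the paper's argument needs no Fourier analysis and makes the $\log n$ loss visible through the coupling formula for $\tilde U_v$.
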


In particular, if the travel times $T_n$ of geometric last-passage percolation are noise sensitive for the bit-resampling noise along a sequence $t_n = n^{- \alpha + o(1)}$, then they are also noise sensitive for the site-resampling noise along the sequence $t'_n = C \,t_n\log n= n^{-\alpha + o(1)}$. Inspecting our proof of Theorem~\ref{th:nsenoftraveltimes}, which is based on Theorem~\ref{th:genbks}, shows that the conclusion of the theorem holds for the bit-resampling noise for sequences $t_n\gg1/\log n$. Hence, our approach just barely fails to establish noise sensitivity for site-resampling.

\subsection{From stability to chaos in last-passage percolation}

Closely related to noise sensitivity is the notion of chaos, first explored in the context of Gaussian disordered systems by Chatterjee~\cite{cha14}. In the language of disordered systems, chaos differs from noise sensitivity in that it asks for the sensitivity of the so-called `ground state' of a disordered system, as opposed to the sensitivity of the `energy' of the ground state. Instead of making these terms precise, let us simply explain what they correspond to in the context of last-passage percolation. Thinking of the weight configuration $\omega$ as a random potential, and the weight accumulated along a path as the energy of the path in the random potential, then geodesics play the role of ground states, i.e.\ energy-optimising paths, and their travel times corresponds to the ground state energies. Chaos would hence correspond to sensitivity of the geodesic $\pi_n$ as the weight configuration is exposed to a slight perturbation.

As discussed above, there are various possible ways to extend the notion of a perturbation when moving from a Boolean to a more general setting. In order to align the discussion below with prior work on the topic, we shall here consider the site-resampling noise. Hence,  we shall say that geometric last-passage percolation exhibits {\bf chaos} if for every $t>0$, as $n\to\infty$, we have
\begin{equation}\label{eq:chaos}
    \E\big[\big|\pi_n(\omega)\cap\pi_n(\tilde\omega^t)\big|\big]=o(n).
\end{equation}

Chatterjee~\cite{cha14} studied certain Gaussian disordered systems, including random polymers, random matrices and spin-glasses, and showed that statements analogous to~\eqref{eq:chaos} hold under Ornstein-Uhlenbeck dynamics. Later, Bordenave, Lugosi and Zhivotovskiy~\cite{borlugzhi20} established a sharp transition at $t\asymp n^{-1/3}$, from a stable regime to a chaotic regime, for the largest eigenvector of an $n\times n$ Wigner matrix. In the context of Brownian last-passage percolation, Ganguly and Hammond~\cite{ganham24} established a sharp transition from stability to chaos, again at $t\asymp n^{-1/3}$. A weaker form of this transition was later established for a large family of first- and last-passage percolation models by Ahlberg, Deijfen and Sfragara~\cite{ahldeisfr1,ahldeisfr24}, occurring at $t\asymp \frac1n\Var(T_n)$, which under KPZ-scaling is equivalent to $t\asymp n^{-1/3}$. In particular, it was shown in~\cite{ahldeisfr24} that for geometric last-passage percolation under site-resampling, as $n\to\infty$, we have
\begin{description}
    \item[\quad {\rm\it Stability:}] for $t\ll n^{-1/3}$ we have $\Corr \big(T_n(\omg), T_n(\tilde\omg^t) \big)=1-o(1)$;
    \item[\quad {\rm\it Chaos:}] for $t\gg n^{-1/3}$ we have $\E\big[\big|\pi_n(\omega)\cap\pi_n(\tilde\omega^t)\big|\big]=o(n)$.
\end{description}

Note that the above result considers different observables in the two regimes, contrary to~\cite{ganham24} which considers the expected overlap of geodesics in both regimes. Altogether, these results give evidence that the transition from stability to chaos, for models adhering to KPZ-scaling, should occur at $t\asymp n^{-1/3}$. We expect that the transition from stability to noise sensitivity scales similarly, leading to the following conjecture, stated also in~\cite{ahldeisfr24}.


\begin{conjecture}\label{conj:threshold}
    Consider planar last-passage percolation with geometric weights. The transition from stability to noise sensitivity occurs at $t\asymp n^{-1/3}$ in the sense that, as $n\to\infty$,
    \begin{enumerate}[label=(\roman*)]
    \item for $t\ll n^{-1/3}$ we have $\Corr \big(T_n(\omg), T_n(\tilde\omg^t) \big)=1-o(1)$;
    \item for $t\gg n^{-1/3}$ we have $\Corr \big(T_n(\omg), T_n(\tilde\omg^t) \big)=o(1)$.
    \end{enumerate}
\end{conjecture}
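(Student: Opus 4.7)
Since part~(i) --- stability for $t\ll n^{-1/3}$ --- has already been established in~\cite{ahldeisfr24}, I would concentrate on part~(ii). The natural plan is to combine the comparison in Proposition~\ref{prop:couplingargument} with a strengthening of Theorem~\ref{th:nsenoftraveltimes}: by that proposition, site-resampling noise sensitivity at scale $t\gg n^{-1/3}$ follows from bit-resampling noise sensitivity at scale $t/\log n$, so the target is to push the conclusion of Theorem~\ref{th:nsenoftraveltimes} from the range $t\gg1/\log n$ obtainable by the present method down to $t\asymp n^{-1/3}/\log n$.

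To identify the obstruction, I would first feed the best influence bound implicit in Theorem~\ref{th:probageod} into Theorem~\ref{th:genbks}. Summing $\P(v\in\pi_n)^2$ --- whose role as a proxy for the per-vertex influence is justified in the proof of Theorem~\ref{th:nsenoftraveltimes} --- against the $\asymp n^{5/3+o(1)}$ vertices within distance $(\log n)^{1/3}n^{2/3}$ of the diagonal, and using the Gaussian bound of Theorem~\ref{th:probageod}(i) off-diagonal, yields $\sum_{v,i}\I_{v,i}(T_n)^2\le n^{1/3+o(1)}$. Combining this with $\Var(T_n)\asymp n^{2/3}$ and the small-$t$ asymptotic $\theta(p,t)\asymp t$ from Theorem~\ref{th:genbks} gives
\[
\Corr\bigl(T_n(\omega),T_n(\omega^t)\bigr)\le\left(\frac{\sum_{v,i}\I_{v,i}(T_n)^2}{\Var(T_n)}\right)^{\theta(p,t)}\le n^{-t/3+o(t)}.
\]
This upper bound is $o(1)$ precisely when $t\log n\to\infty$, independently of any further improvement in the influence bound. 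Consequently Theorem~\ref{th:genbks} alone cannot deliver the conjectural threshold, and a qualitatively new ingredient is required.

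The main obstacle is the logarithmic loss built into the hypercontractive exponent $\theta(p,t)$. To circumvent it I would replace the single inequality of Theorem~\ref{th:genbks} by a full spectral decomposition, writing
\[
\Cov\bigl(T_n(\omega),T_n(\omega^t)\bigr)=\sum_{k\ge1}e^{-kt}\,V_k(T_n),
\]
where $V_k(T_n)$ denotes the variance carried by the level-$k$ chaos component of $T_n$ in the bit-encoding, and trying to show that for $k\ll n^{1/3}$ the partial sum $\sum_{j\le k}V_j(T_n)$ is $o(\Var(T_n))$. Such a high-frequency concentration statement would mirror the sharp-threshold arguments of Garban, Pete and Schramm~\cite{garpetsch10} for Bernoulli percolation. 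Plausible inputs are the stationary last-passage machinery behind Theorem~\ref{th:probageod}, coalescence of geodesics, and the geodesic-chaos statement from~\cite{ahldeisfr24}; the hard part, however, will be to convert decorrelation of the argmax $\pi_n$ into spectral mass of the value $T_n$ on high levels, as the two phenomena are not obviously equivalent and the KPZ setting offers no clear randomised-algorithm analogue of the one exploited in planar percolation.
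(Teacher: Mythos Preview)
The statement you are addressing is Conjecture~\ref{conj:threshold}, and the paper does \emph{not} prove it. The authors explicitly say that part~(i) was established in~\cite{ahldeisfr24}, that Theorem~\ref{th:nsenoftraveltimes} is only ``a first step towards a proof of the second part,'' and they list part~(ii) as Problem~8.1 in the open-problems section. So there is no ``paper's own proof'' to compare against.

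Your proposal is not a proof either, and you are candid about this. What you have written is a correct diagnosis of the obstruction: feeding the influence bounds from Theorem~\ref{th:probageod} into Theorem~\ref{th:genbks} gives $\Corr(T_n(\omega),T_n(\omega^t))\le n^{-ct+o(t)}$, which is $o(1)$ only for $t\gg 1/\log n$, matching exactly the remark at the end of Section~\ref{se:proof of nsen lpp}. Your observation that the loss is structural to the hypercontractive exponent $\theta(p,t)\asymp t$ and cannot be repaired by sharper influence estimates is accurate, and your suggestion to look at the full spectral decomposition and show concentration on levels $k\gg n^{1/3}$ is a reasonable speculation in line with the analogy to~\cite{garpetsch10} that the paper itself draws. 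But this remains a research programme, not a proof: you have not supplied any mechanism for controlling the low-level Fourier mass of $T_n$, and as you acknowledge, converting geodesic decorrelation into spectral information about the scalar $T_n$ is precisely the missing idea.

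In short: your write-up is an honest and accurate summary of why the conjecture is open, consistent with the paper's own stance, but it is not a proof and should not be presented as one.
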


Further support for the above conjecture comes from a recent approach to noise sensitivity developed by Tassion and Vanneuville~\cite{tasvan23}. They showed, in the context of Bernoulli site percolation on the triangular lattice, that stability of the pivotal set for $t\ll n^{-3/4}$ implies noise sensitive behaviour for $t\gg n^{-3/4}$, thus reproving the occurrence of the noise sensitivity threshold at $t\asymp n^{-3/4}$, as established already in~\cite{garpetsch10}. This suggests, indeed, that the thresholds for noise sensitivity and chaos coincide.

We emphasise that the first part of Conjecture~\ref{conj:threshold} was proved in~\cite{ahldeisfr24}. Theorem~\ref{th:nsenoftraveltimes} takes a first step towards a proof of the second part of the conjecture. We provide a list of further open problems at the end of this paper.


\subsection{Organization of the article}

In Section~\ref{se:nsen}, we study noise sensitivity of non-Boolean functions, and prove the generalisation of the BKS theorem stated in Theorem~\ref{th:genbks}. In Section~\ref{se:stationarylpp}, we revise the stationary approach to last-passage percolation via models with boundary, and present the preliminary results that we shall need for Section~\ref{se:probageod}. In Section~\ref{se:probageod}, we provide two bounds on the probability of a given vertex being on the geodesics, which together amount to Theorem~\ref{th:probageod}. Sections~\ref{se:stationarylpp} and~\ref{se:probageod} can be read independently of Section~\ref{se:nsen}. In Section~\ref{se:proof of nsen lpp} we use the results of Sections~\ref{se:nsen} and~\ref{se:probageod} to prove our main result Theorem~\ref{th:nsenoftraveltimes}. In Section~\ref{se:coupling argument} we prove Proposition~\ref{prop:couplingargument} on the comparison between the bit-resampling noise and the site-resampling noise. We end the paper with a discussion of further open problems in Section~\ref{se:open}.

\section{A generalization of the BKS theorem} \label{se:nsen}

In this section we prove a version of the BKS theorem, which is stated as Theorem~\ref{th:genbks} of the introduction, for pairs of functions on the hypercube that take values in $\R$ instead of $\{0, 1\}$. The approach we take involves interpreting the bit resampling dynamics as a Markov semigroup, and understanding its behaviour with respect to certain differential operators.

A key ingredient in the different proofs of the BKS theorem, as well as related results such as the Kahn-Kalai-Linial theorem and Talagrand's inequality, is an hypercontractive inequality. This is for example discussed in~\cite{garste15,odonnell}. Benjamini, Kalai and Schramm~\cite{benkalsch99} used hypercontractivity in a spectral setting, and their proofs rely on the fact that Boolean functions are bounded and can take only two values. 
It has been known for some time that interpolation methods relying on calculus with Markov semigroups and differential operations can produce noise sensitivity-relevant results. For instance, the hypercontractive properties of the semigroup associated to resampling bits of the discrete hypercube at constant rate was used to prove Talagrand's inequality by Cordero-Erausquin and Ledoux~\cite{corled12}.
A similar approach was used by Chatterjee~\cite{cha14} to establish an equivalence between superconcentration and chaos for certain Gaussian disordered systems, such as spin glasses and directed polymers, based on the Ornstein-Uhlenbeck semigroup.
Related results have been obtained also by Eldan and Gross~\cite{eldgro22}.

Our approach, on the Boolean hypercube, will follow the exposition of related results presented in lectures held by Ramon van Handel, and transcribed by Rosenthal~\cite{roshan20}.
Notably, we interpret Lemma~11 of~\cite{roshan20} as a generalization of the BKS theorem. Our proof differs in that we extend the argument to biased coin flips and to pairs of functions $f$ and $g$ defined in terms of a countable sequence of bits, and we replace a martingale argument using properties of the Markov semigroup.
We mention once again that results of a similar flavour have been obtained in parallel by different methods in work of Caravenna and Donadini~\cite{cardon25}.

\subsection{Noise sensitivity in a semigroup framework}

Let $p \in (0, 1)$ be fixed, and let $X = (X_i)_{i \in \N}$ be a sequence of independent Bernoulli random variables of parameter $p$. Let $L^2$ denote the set of functions $f : \{0, 1\}^{\N} \rightarrow \R$ such that $f(X)^2$ has finite expectation.\footnote{The space which we call $L^2$ thus depends on $p$.} For $f \in L^2$, we write $\E[f]$ and $\Var(f)$ for the expectation and variance of $f(X)$.

We introduce the \textbf{bit-resampling process}
$\mathbf{X} := (X^t)_{t \geq 0} = (X^t_i)_{t \geq 0, i \in \N}$ as the continuous-time stochastic process on $\{0, 1\}^{\N}$ started from $X$ and where the dynamics is described by independently resampling the bits according to independent Poisson clocks of rate 1. Hence, $X^0_i = X_i$ for all $i \in \N$, and at every time $t$ the Poisson clock assigned to $i$ rings, $X_i^t$ is resampled according to a Bernoulli distribution of parameter $p$ independently of everything else. Note that the bit-resampling process defined here differs from the bit-resampling noise as defined in~\eqref{def:noise} in that
\[
\mathbf{X} \text{ is Markov, reversible and invariant for the law of X}.
\]
Note, however, that the joint distribution of the pair $(X,X^t)$ is equal in the two definitions. Since the result we aim to prove is a result about the joint distribution of this pair, we may henceforth work with the bit-resampling process as defined here. The advantage for doing so is that we will be able to exploit the framework of Markov semigroups.

Given $x \in \{0, 1\}^{\N}$, we will denote by $\P_x$ and $\E_x$ the conditional probability and expectation corresponding to $\mathbf{X}$ with initial configuration $X=x$.
We introduce the semigroup family $(P_t)_{t \geq 0}$ associated to the Markov process $\mathbf{X}$, where for $t \geq 0$ we let $P_t : L^2 \rightarrow L^2$ be defined, for $f \in L^2$ and $x \in \{0, 1\}^{\N}$, as
\[
P_t f(x) := \E_x[f(X^t)] = \E[f(X^t) |X = x] .
\]
The family $(P_t)_{t\ge0}$ satisfies the \textbf{semigroup property}, which is a way of expressing the Markov property:
\begin{equation} \label{semi}
P_{t+s} = P_t \circ P_s\quad\text{for all }s,t\ge0 .
\end{equation}
By stationarity and reversibility of $\mathbf{X}$ we have for every $t \geq 0$ that the operator $P_t$ is symmetric:
\begin{equation}\label{eq:Pt_symmetry}
\E[f P_t g] = \E[P_t f g] \quad\text{for all }f,g\in L^2.
\end{equation}
Using the semigroup framework, and properties~\eqref{semi} and~\eqref{eq:Pt_symmetry}, we may rewrite central quantities of interest as
$$
\E[f(X)g(X^{2t})]=\E[f(P_{2t}g)]=\E[fP_t(P_tg)]=\E[(P_tf)(P_tg)].
$$
Subtracting $\E[f]\E[g]$ from both sides gives
\begin{equation}\label{eq:cov_id}
\Cov\big(f(X),g(X^{2t})\big)=\Cov(P_tf,P_tg).
\end{equation}

We may also recast the notion of influence in terms of difference operators $(\nabla_i)_{i\in\N}$ acting on $f$. Given $f \in L^2$, $x \in \{0, 1\}^{\N}$ and $\xi \in \{0, 1\}$, the difference in the outcome of $f$ induced by changing the value of bit $i$ to $\xi$ in $x$ is $f \circ \sigma_i^{\xi}(x) - f(x)$, where
\begin{equation*}
\sigma_i^\xi(x) := (x_1, \dots, x_{i-1}, \xi, x_{i+1}, \dots).
\end{equation*}
Whenever a bit changes value in the process $(X^t)_{t \geq 0}$, its new value is decided by a Bernoulli random variable of parameter $p$. We introduce the difference operators $\nabla_i$, for $i \in \N$, acting on $L^2$ by sampling $\xi \sim \mathrm{Ber}(p)$ and letting
\begin{equation}\label{def:nabla}
\nabla_i f(x) := \E^{\xi}\big[f \circ \sigma_i^{\xi}(x) \big] - f(x) ,
\end{equation}
where $\E^{\xi}$ means the expectation is taken with respect to the random variable $\xi$ only. We now note that the notion of influence, introduced in~\eqref{def:influence}, can be expressed in therms of the difference operators as 
\begin{equation}\label{eq:inf_equiv}
    \I_i(f) = \E \big[ |\nabla_i f | \big]  .
\end{equation}
    
We can make the expression of $\nabla_i f$ more explicit by computing the expectation with respect to $\xi$ and using that $x_i \in \{0, 1\}$ to write $f(x) = x_i f(\sigma_i^1(x)) +  (1-x_i) f(\sigma_i^0(x))$, yielding
\begin{equation} \label{nabiexpr}
\nabla_i f(x) = (p - x_i) \big(f \circ \sigma_i^1(x) - f \circ \sigma_i^0(x) \big) .
\end{equation}
Since $|f \circ \sigma_i^1(X) - f \circ \sigma_i^0(X)|$ only depends on $(X_j)_{j \neq i}$, the influence of $i$ on $f$ can then be expressed as
\begin{equation} \label{exprofinfl}
\I_i(f) = 2p(1-p)\E\big[ |f \circ \sigma_i^1(X)) - f \circ \sigma_i^0(X) | \big]  .
\end{equation}
In the case where $f$ is Boolean, we recover the classical definition of influences up to the factor $2p(1-p)$.

Ahlberg, Sfragara and Deijfen~\cite{ahldeisfr24} worked in a similar setting with an $L^2$ notion of influence, referred to as the `coinfluence': $\E[\nabla_i f(X) \nabla_i f(X^t)]$. Bounding the coinfluences will be central also here, and we shall do so by reducing it to a statement of $L^1$ influences, as defined in~\eqref{def:influence} and~\eqref{eq:inf_equiv}.
Indeed it is common that the $L^1$ norm of a gradient appears in $L^1-L^2$ estimates such as Talagrand's inequality. A reason for that, which will become clear in the proof, is that the hypercontractive inequality gives appropriate control of the coinfluence $\E[\nabla_i f(X) \nabla_i f(X^t)]$ in terms of $\E[|\nabla_i f|]$.

\subsection{Operator properties}

In this subsection we explore some key properties of the semigroup and difference operators introduced above, in preparation for the proof of Theorem~\ref{th:genbks}. 
At occasions it will be useful to work with local functions, that is functions depending on finitely many coordinates:
We say that $f \in L^2$ is \textbf{local} and write $f \in L^2_{loc}$ if and only if there exists $m \in \N$ such that $f(x) = f(y)$ whenever $x_j = y_j$ for all $1 \leq j \leq m$.

Note that, for any $f \in L^2$, we can define the sequence of functions $(f_k)_{k \in \N}$ by
\[
f_k(x) = \E[f(X) | X_1 = x_1, \dots, X_k = x_k] \, ,
\]
The sequence $(f_k)_{k \in \N}$ is then a sequence of local functions which converges to $f$ in $L^2$. Any time we invoke `density of $L^2_{loc}$ in $L^2$', it can be thought of as referring to an argument where we approximate functions of $L^2$ by functions of $L^2_{loc}$ in this way and let $k$ go to $+ \infty$.

Our first lemma expresses the commutativity of the operators $P_t$ and $\nabla_i$. It is a consequence of independence of the bits with respect to each other.

\begin{lemma}[Commutativity]\label{com}
For every $t \geq0, i \in \N$ and $f \in L^2$,
\[
\nabla_i P_t f = P_t \nabla_i f  .
\]
\end{lemma}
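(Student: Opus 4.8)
The plan is to exploit the product structure of the bit-resampling process: the coordinate $i$ evolves as an independent copy of the whole dynamics restricted to that single bit, and is independent of the evolution of all other coordinates. I will first reduce to local functions $f \in L^2_{loc}$ by density, since both $P_t$ and $\nabla_i$ are bounded linear operators on $L^2$ (contractions, in fact), so the identity extends from a dense subspace to all of $L^2$; this lets me avoid convergence technicalities during the main computation. With $f$ local, I would write everything out as finite expectations.

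The core step is to compute both sides at a fixed $x \in \{0,1\}^{\N}$ using the explicit form \eqref{nabiexpr}, namely $\nabla_i h(x) = (p - x_i)\big(h\circ\sigma_i^1(x) - h\circ\sigma_i^0(x)\big)$ for any $h \in L^2$. For the right-hand side, $P_t\nabla_i f(x) = \E_x[\nabla_i f(X^t)] = \E_x\big[(p - X^t_i)\big(f\circ\sigma_i^1(X^t) - f\circ\sigma_i^0(X^t)\big)\big]$. Now decompose the process as $X^t = (X^t_i, (X^t_j)_{j\neq i})$, two independent components. The key observation is that $f\circ\sigma_i^1(X^t)$ and $f\circ\sigma_i^0(X^t)$ depend only on $(X^t_j)_{j\neq i}$ — they overwrite coordinate $i$ — and hence are independent of $X^t_i$. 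Therefore the expectation factorizes: $P_t\nabla_i f(x) = \E[p - X^t_i \mid X_i = x_i]\cdot \E_x\big[f\circ\sigma_i^1(X^t) - f\circ\sigma_i^0(X^t)\big]$. For the single-bit Markov chain one computes $\E[X^t_i \mid X_i = x_i] = e^{-t}x_i + (1 - e^{-t})p$, so $\E[p - X^t_i\mid X_i = x_i] = e^{-t}(p - x_i)$. For the left-hand side, $\nabla_i P_t f(x) = (p - x_i)\big(P_t f\circ\sigma_i^1(x) - P_t f\circ\sigma_i^0(x)\big)$, and since $P_t(f\circ\sigma_i^\xi)(x) = \E_x\big[f\circ\sigma_i^\xi(X^t)\big]$ depends only on the law of $(X^t_j)_{j\neq i}$ started from $(x_j)_{j\neq i}$ — overwriting bit $i$ both in the function and, harmlessly, in the initial condition — one checks $P_t f\circ\sigma_i^\xi(x) = \E_x\big[f\circ\sigma_i^\xi(X^t)\big]$ with the $\sigma_i^\xi$ commuting past the coordinate-$i$ dynamics. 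Matching the two computations, both sides equal $e^{-t}(p - x_i)\,\E_x\big[f\circ\sigma_i^1(X^t) - f\circ\sigma_i^0(X^t)\big]$, which proves the identity.

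Alternatively, and perhaps more cleanly, I would phrase this via the tensor/product structure directly: write $P_t = P_t^{(i)} \otimes P_t^{(\neq i)}$ where $P_t^{(i)}$ is the single-site semigroup on coordinate $i$ and $P_t^{(\neq i)}$ acts on the remaining coordinates; observe that $\nabla_i$ acts only on the coordinate-$i$ factor and is the identity on the other factor (in the sense that $\nabla_i = \nabla^{(i)} \otimes \Id$); and then note that $\nabla^{(i)}$ and $P_t^{(i)}$ commute as operators on functions of a single Bernoulli variable — a two-dimensional linear-algebra fact, or immediate from the computation above. The main (and really only) obstacle is bookkeeping: making the factorization rigorous on $\{0,1\}^{\N}$ requires the reduction to local functions and a careful statement of what "independent of the evolution of the other coordinates" means for the joint law $\P_x$. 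Once that is in place, the computation is short. I would therefore structure the write-up as: (1) reduce to $f \in L^2_{loc}$ by density; (2) condition on $(X^t_j)_{j\neq i}$ and use independence of $X^t_i$ from it; (3) compute the one-bit conditional expectation $\E[p - X^t_i \mid X_i = x_i] = e^{-t}(p-x_i)$; (4) read off that both $\nabla_i P_t f$ and $P_t\nabla_i f$ equal $e^{-t}(p-x_i)\,\E_x[f\circ\sigma_i^1(X^t) - f\circ\sigma_i^0(X^t)]$.
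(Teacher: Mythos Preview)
Your overall strategy --- reduce to local $f$, use the explicit form \eqref{nabiexpr}, and factorize via independence of bit $i$ from the remaining coordinates --- is sound and, once carried out correctly, gives a valid proof. It is a more computational route than the paper's: the paper works directly from the definition $\nabla_i f(x) = \E^\xi[f\circ\sigma_i^\xi(x)] - f(x)$ and observes that the law of $X^t$ under $\E^\xi \E_{\sigma_i^\xi(x)}$ coincides with the law of $\sigma_i^\xi(X^t)$ under $\E^\xi \E_x$ (after averaging over $\xi$, bit $i$ of $X^t$ is a fresh $\mathrm{Ber}(p)$ in both cases), then subtracts $P_t f(x)$ to finish. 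Your approach instead computes both sides explicitly and, as a bonus, directly produces the identity $\nabla_i P_t f(x) = e^{-t}(p-x_i)\,P_t(f\circ\sigma_i^1 - f\circ\sigma_i^0)(x)$, which the paper proves separately as Lemma~\ref{timedecor}.

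However, your left-hand side computation has a slip. The assertion $(P_t f)\circ\sigma_i^\xi(x) = \E_x[f\circ\sigma_i^\xi(X^t)]$ is false: starting the process from $\sigma_i^\xi(x)$ and evaluating $f$ at time $t$ is not the same as starting from $x$, running to time $t$, and then overwriting bit $i$ by $\xi$, since in the former case bit $i$ may have been resampled away from $\xi$ (take $f = \mathrm{pr}_i$ to see the discrepancy). Using that claim would give $\nabla_i P_t f(x) = (p-x_i)\,\E_x[f\circ\sigma_i^1(X^t) - f\circ\sigma_i^0(X^t)]$ with no $e^{-t}$, contradicting your own final formula. The fix is easy: write $f(X^t) = X^t_i\, f\circ\sigma_i^1(X^t) + (1-X^t_i)\, f\circ\sigma_i^0(X^t)$, use independence of $X^t_i$ from $(X^t_j)_{j\neq i}$ under $\P_{\sigma_i^\xi(x)}$, and compute the difference $(P_t f)(\sigma_i^1(x)) - (P_t f)(\sigma_i^0(x))$; the factor $\E_{\sigma_i^1(x)}[X^t_i] - \E_{\sigma_i^0(x)}[X^t_i] = e^{-t}$ then appears exactly where you claim it does. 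Your tensor-product alternative sidesteps this issue entirely and would be the cleaner write-up.
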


\begin{proof}
By density of $L^2_{loc}$ in $L^2$ it is sufficient to prove $\nabla_i P_t f = P_t \nabla_i f$ for every $f \in L^2_{loc}$. Let $f \in L^2_{loc}$ and $m$ be such that $f$ only depends on the first $m$ coordinates. Let $i \in \{0, \dots, m\}$, $x \in \{0, 1\}^{\N}$ and $t \geq 0$ be fixed. Let $\xi$ be a $\mathrm{Ber}(p)$-distributed random variable. When $\mathbf{X}$ is started from $\sigma_i^{\xi}(x)$, $X^t_i$ is equal to $\xi$ with probability $e^{-t}$ and it is an independent $\mathrm{Ber}(p)$ random variable with probability $1 - e^{-t}$,
so that under $\E^{\xi} \E_{\sigma_i^{\xi}(x)}$, $X^t_i$ is a $\mathrm{Ber}(p)$ random variable independent of $(X^t_j)_{j \neq i}$. For $j \neq i$, $X^t_j$ has the same distribution under $\P_{\sigma_i^{\xi}(x)}$ as under $\P_x$. Therefore, the law of $X^t$ under $\E^{\xi} \E_{\sigma_i^{\xi}(x)}$ is the same as the law of $\sigma_i^{\xi}(X^t)$ under $\E^{\xi} \E_x$. As a consequence,
\[
\E^{\xi} \Big[ \E_{\sigma_i^{\xi}(x)} \big[f(X^t)\big] \Big] = \E^{\xi}  \Big[\E_x \big[f \circ \sigma_i^{\xi}(X^t)\big] \Big]  .
\]
Subtracting $P_t f(x)$ on both sides and using the definition of $P_t$, it follows that
\[
\E^{\xi} \big[ \big( P_t f \big) \circ \sigma_i^{\xi}  (x) \big] - P_t f(x) = \E_x \Big[ \E^{\xi}\big[f \circ \sigma_i^{\xi}(X^t)\big] - f(X^t) \Big]  .
\]
Finally, we obtain from the definition of $\nabla_i$ and $P_t$ that the left-hand side is $\nabla_i P_t f(x)$ and the right-hand side is $P_t \nabla_i f(x)$.
\end{proof}

The next lemma is an expression for the infinitesimal generator of $P_t$. We refer to this as a heat equation due to the infinitesimal generator $\sum_i \nabla_i = \sum_i \nabla_i \circ \nabla_i$ being a discrete version of the Laplacian operator. It is helpful to keep in mind this analogy with a partial differential equation for interpreting other operator properties. It could also be referred to as a `quenched' dynamical Margulis-Russo formula, whereas an `annealed' version would be, for instance, the case of monotone functions in \cite[Lemma~3.2]{tasvan23}. It would be tedious to express in which sense this formula holds for general $L^2$ functions, and we circumvent this difficulty by going through local functions.

\begin{lemma}[Heat equation] \label{dynmr}
Let $f \in L^2_{loc}$. For every $x \in \{0, 1\}^{\N}$, $t \mapsto P_t f(x)$ is differentiable and its time-derivative satisfies
\[
\frac{d}{dt} P_t f(x) = \sum_{i = 0}^{\infty} \nabla_i P_t f .
\]
\end{lemma}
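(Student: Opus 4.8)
The plan is to reduce the claim to a computation on a finite state space, where $t\mapsto P_tf(x)$ is manifestly smooth, and then to identify the generator of the restricted chain with $\sum_i\nabla_i$ — which is exactly the ``discrete Laplacian / heat equation'' picture alluded to in the statement.

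First I would use locality. Since $f\in L^2_{loc}$, there is a finite set $\Lambda$ of coordinates such that $f(x)$ depends only on $(x_i)_{i\in\Lambda}$; write $m:=|\Lambda|$. Because the bits of $\mathbf X$ are resampled by independent Poisson clocks, the law of $(X^t_i)_{i\in\Lambda}$ under $\P_x$ depends only on $(x_i)_{i\in\Lambda}$, and hence $P_tf$ is again local with the same dependence set $\Lambda$. Consequently, restricting $\mathbf X$ to the coordinates in $\Lambda$ yields a continuous-time Markov chain on the finite set $\{0,1\}^\Lambda$, and $t\mapsto P_tf(x)$ is a coordinate of $e^{t\mathcal L}f$, where $\mathcal L$ is the associated (finite) generator matrix. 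In particular $t\mapsto P_tf(x)$ is smooth in $t$, and $\frac{d}{dt}P_tf=\mathcal L e^{t\mathcal L}f=\mathcal L P_tf$ by standard matrix calculus. So it suffices to prove $\mathcal Lg=\sum_{i=0}^\infty\nabla_ig$ for every local $g$ (note $\nabla_ig=0$ for $i\notin\Lambda$, so the sum is really finite).

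To identify $\mathcal L$, I would expand $P_tf(x)$ to first order in $t$ by conditioning on which of the $m$ clocks attached to coordinates in $\Lambda$ have rung by time $t$. With probability $e^{-mt}$ none has rung and $X^t=x$ on $\Lambda$; with probability $(1-e^{-t})e^{-(m-1)t}$ exactly the clock of a given $i\in\Lambda$ has rung, in which case $X^t_i$ is a fresh $\mathrm{Ber}(p)$ variable (independent of everything, no matter how many times that clock rang) and $X^t_j=x_j$ for $j\neq i$; and the event that two or more clocks have rung has probability $O(t^2)$. Since $f$ is bounded on the finite state space, that last event contributes $O(t^2)$, and using $e^{-mt}=1-mt+O(t^2)$, $(1-e^{-t})e^{-(m-1)t}=t+O(t^2)$, together with the definition $\nabla_ig(x)=\E^\xi[g\circ\sigma_i^\xi(x)]-g(x)$, we get
\[
P_tf(x)=(1-mt)f(x)+t\sum_{i\in\Lambda}\E^\xi\big[f\circ\sigma_i^\xi(x)\big]+O(t^2)=f(x)+t\sum_{i\in\Lambda}\nabla_if(x)+O(t^2).
\]
Differentiating at $t=0$ yields $\mathcal Lf(x)=\sum_{i\in\Lambda}\nabla_if(x)$, and the same computation applies verbatim to any local $g$. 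Combined with the previous paragraph this gives $\frac{d}{dt}P_tf(x)=\mathcal L P_tf(x)=\sum_i\nabla_iP_tf(x)$. Alternatively, instead of the matrix exponential one may argue purely from the semigroup property~\eqref{semi}: writing $P_{t+h}f=P_h(P_tf)$ and applying the above expansion to the local function $P_tf$ gives $P_{t+h}f(x)=P_tf(x)+h\sum_i\nabla_iP_tf(x)+o(h)$, which (since $t\mapsto P_tf(x)$ is differentiable) identifies the derivative.

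The closest thing to an obstacle here is bookkeeping rather than anything deep: one must set up the reduction to finitely many coordinates carefully — this is precisely what makes $f$ and $P_tf$ bounded, so that the $O(t^2)$ and $o(h)$ errors above are legitimate and uniform — and one relies on the elementary fact that a continuous-time Markov chain on a finite state space has a well-defined bounded generator $\mathcal L$, whence $\frac{d}{dt}P_tf=\mathcal L P_tf$ with no analytic subtlety. The only step with genuine content is the first-order expansion identifying $\mathcal L$ with $\sum_i\nabla_i$, which is where the heat-equation analogy comes from.
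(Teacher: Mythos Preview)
Your proof is correct and uses the same core idea as the paper --- reduce to finitely many coordinates and expand $P_tf$ to first order by conditioning on which Poisson clocks have rung. The organization differs slightly: the paper expands the increment $P_tf(x)-P_sf(x)$ at a general pair $s<t$, obtains $\frac{d}{dt}P_tf=\sum_i P_t\nabla_i f$, and then invokes the commutativity lemma (Lemma~\ref{com}) to swap $P_t$ and $\nabla_i$. You instead first argue that the restricted chain on $\{0,1\}^\Lambda$ has a finite generator $\mathcal L$ with $P_t=e^{t\mathcal L}$, identify $\mathcal L=\sum_i\nabla_i$ by a single expansion at $t=0$, and conclude $\frac{d}{dt}P_tf=\mathcal L P_tf$ directly. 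This buys you a small economy: you avoid the commutativity lemma entirely (a point the paper itself notes in a footnote as a feature of an alternative argument). Both routes are equally elementary; yours leans a bit more on the abstract semigroup/matrix-exponential framework, the paper's is more hands-on.
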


\begin{proof}
Using the assumption that $f$ is local, let $m$ be an integer such that $f$ depends only on the first $m$ coordinates. This allows us to consider $f$ as being a function from $\{0, 1\}^m$ to $\R$. Note that $f$ is bounded since $\{0, 1\}^m$ is finite. Fix $x \in \{0, 1\}^m$ from now on. By linearity of the expectation, for all $0\le s<t$ we have
\[
 P_t f(x) - P_s f(x) = \E\big[f(X^t) - f(X^s) | X = x \big] .
\]
Let $E_{s, t}$ be the random set
\[
E_{s, t} := \{i \in \{0, \dots, m\} : (X_i) \text{ is resampled between $s$ and $t$}\} ,
\]
and for $0 \leq i \leq m$ let $R_i(s,t)$ be the event
\[
R_i(s, t) := \{E_{s, t} = \{i\} \}.
\]
By definition of `resampling at rate 1', for every $i$ the number of times at which $(X_i)$ is resampled between $s$ and $t$ is a Poisson random variable of parameter $t-s$ and these are independent for distinct $i$. The function $f$ is bounded, the powerset of $\{0, 1\}^m$ is finite and $f(X^t) - f(X^s)$ is 0 on $\{E_{s, t} = \emptyset\}$. Summing over the possible issues of $E_{s, t}$ yields, as $|t-s|\to0$,
\[
\E \big[f(X^t) - f(X^s) \big| X = x \big] = \sum_{i = 0}^m \E \big[ \big(f(X^t) - f(X^s) \big) \1_{R_i(s, t)} \big|X = x \big] + o(|t-s|) .
\]
The event $R_i(s, t)$ is independent of $X^s$ and has probability $(t-s)(1 + o(1))$. Moreover, the distribution of $X^t$ given $X^s$ and $R_i(s, t)$ is that of $\sigma_i^{\xi}(X^s)$ where $\xi$ is a $\mathrm{Ber}(p)$ random variable independent of everything else. As a consequence,
\begin{align*}
\E \big[\big(f(X^t) - f(X^s) \big) \1_{R_i(s, t)} \big| X^s \big] &= \big(\E \big[f \circ \sigma_i^{\xi}(X^s) |X^s\big] - f(X^s) \big) \P \big(R_i(s, t) \big) \\ &= \nabla_i f(X^s) (t-s)(1 + o(1)) .
\end{align*}
Thus,
\[
P_t f(x) - P_s f(x) = (t-s) \sum_{i = 0}^m \E[ \nabla_i f(X^s)|X = x] + o(|t-s|) .
\]
Dividing by $t-s$ and evaluating the limits $t \rightarrow s^+$ and $s \rightarrow t^-$ yields 
\[
\frac{d}{dt} P_t f(x) = \sum_{i = 0}^m P_t \nabla_i f(x) ,
\]
and we conclude by exchanging $\nabla_i$ and $P_t$ using Lemma \ref{com}.\footnote{One may prove Lemma~\ref{dynmr} in a similar way by evaluating the difference $\E[f(X_t | X_0 = x) - \E[f(X_t) | X_{-h} = x]$ and letting $h$ go to $0$. Interestingly, this alternative proof does not require Lemma~\ref{com}. 
}
\end{proof}

In the heat equation analogy, the following lemma is an integration by parts identity. Once again, this relies on the independence of the bits with respect to each other, and also on the stationarity and reversibility of the process.

\begin{lemma}[Integration by parts]\label{ipp}
For every $i \in \N$, and $f, g \in L^2$,
\[
\E \big[ f(\nabla_i g) \big] = - \E \big[(\nabla_i f)(\nabla_i g) \big]  .
\]
\end{lemma}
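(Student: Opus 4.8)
The plan is to prove the identity by a direct computation, exploiting the explicit formula~\eqref{nabiexpr} for $\nabla_i$ together with the independence of the coordinates of $X$ (so that, in particular, no appeal to the semigroup is needed; stationarity and reversibility do offer an alternative route, since $\nabla_i$ is itself the generator of the reversible dynamics that resamples only coordinate $i$, but the bare-hands approach is shortest). Write $D_i f := f \circ \sigma_i^1 - f \circ \sigma_i^0$. By construction $D_i f$ depends only on $(X_j)_{j \neq i}$, and $\nabla_i f = (p - X_i)\, D_i f$ by~\eqref{nabiexpr}.

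First I would record that all three expectations in the statement are well-defined. Since $f\circ\sigma_i^1(X)$ is measurable with respect to $\F_i := \sigma\big((X_j)_{j\neq i}\big)$ and coincides with $f(X)$ on $\{X_i = 1\}$, one has $\E\big[(f\circ\sigma_i^1(X))^2\big] = \frac1p\,\E\big[f(X)^2\1_{\{X_i=1\}}\big] \le \frac1p\,\E[f(X)^2] < \infty$, and likewise for $\sigma_i^0$; hence $D_i f, D_i g \in L^2$, and since $|\nabla_i f| \le |D_i f|$ pointwise, each of $\E[f\nabla_i g]$, $\E[(\nabla_i f)(\nabla_i g)]$ and $\E[D_i f\, D_i g]$ is finite by Cauchy--Schwarz. (Alternatively, one could first prove the lemma for $f,g \in L^2_{loc}$ by a finite computation on $\{0,1\}^m$, and then pass to the limit using density of $L^2_{loc}$ in $L^2$ and boundedness of $f \mapsto \nabla_i f$ on $L^2$.)

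The core step is to condition on $\F_i$. Since $D_i g$ is $\F_i$-measurable it may be pulled out, so
\[
\E[f\,\nabla_i g] = \E\big[\,(p - X_i)\, f(X)\, D_i g\,\big] = \E\big[\, D_i g \cdot \E[(p - X_i) f(X) \mid \F_i]\,\big].
\]
Using $f(X) = X_i\, f\circ\sigma_i^1(X) + (1 - X_i)\, f\circ\sigma_i^0(X)$, the $\F_i$-measurability of $f\circ\sigma_i^1(X)$ and $f\circ\sigma_i^0(X)$, and $\E[X_i \mid \F_i] = p$, one computes
\[
\E[(p - X_i) f(X) \mid \F_i] = p(p-1)\, f\circ\sigma_i^1(X) + (1-p)\,p\, f\circ\sigma_i^0(X) = -\,p(1-p)\, D_i f,
\]
whence $\E[f\,\nabla_i g] = -\,p(1-p)\,\E[D_i f\, D_i g]$. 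Running the same conditioning on the product $\nabla_i f\,\nabla_i g = (p - X_i)^2\, D_i f\, D_i g$ and using $\E[(p - X_i)^2 \mid \F_i] = p(1-p)^2 + (1-p)p^2 = p(1-p)$ gives $\E[(\nabla_i f)(\nabla_i g)] = p(1-p)\,\E[D_i f\, D_i g]$. Comparing the two identities yields $\E[f\,\nabla_i g] = -\,\E[(\nabla_i f)(\nabla_i g)]$, as claimed.

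I do not anticipate a genuine obstacle: once~\eqref{nabiexpr} is in hand the identity reduces to a two-line conditional-expectation computation, and the only mild care required is the integrability bookkeeping of the second paragraph (or, equivalently, the $L^2_{loc}$ approximation).
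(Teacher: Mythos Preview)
Your proof is correct and is essentially the same argument as the paper's, just written in a slightly different language: the paper introduces auxiliary $\mathrm{Ber}(p)$ variables $\xi,\xi'$ and uses the distributional identity $(X,\sigma_i^{\xi}(X))\stackrel{d}{=}(\sigma_i^{\xi'}(X),\sigma_i^{\xi}(X))$ to obtain $\E[(f+\nabla_i f)\nabla_i g]=0$, whereas you unpack this via the explicit formula $\nabla_i f=(p-X_i)D_i f$ and conditioning on $\F_i$. Both reduce to the same two-line computation with the same integrability caveat, and your version has the minor bonus of making the constant $p(1-p)$ visible along the way.
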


\begin{proof}
Let $f, g \in L^2$ and $i \in \N$. Let $\xi, \xi'$ be independent $\mathrm{Ber}(p)$ random variables independent of $X$. Recall $\E^{\xi}$ is the conditional expectation with respect to everything but $\xi$. Since $X$ is a sequence of i.i.d.\ $\mathrm{Ber}(p)$ random variables, $(X, \sigma_i^{\xi}(X))$ is equal in distribution to $(\sigma_i^{\xi'}(X), \sigma_i^{\xi}(X))$, so that
\[
\E  \big[ f \big( \sigma^{\xi}_i(X) \big) g(X) \big] = \E \big[ f \big( \sigma^{\xi}_i(X) \big) g \big( \sigma^{\xi'}_i(X) \big) \big]  ,
\]
Using independence of $\xi, \xi'$ and $X$ and regrouping all the terms inside the expectation, we obtain
\[
\E  \Big[  \E^{\xi}[f \circ \sigma^{\xi}_i(X)] g(X) - \E^{\xi}\big[f \circ \sigma^{\xi}_i(X) \big] \E^{\xi'} \big[ g  \circ \sigma^{\xi'}_i(X)  \big] \Big] = 0  ,
\]
which rewrites as $- \E [ (f+ \nabla_if) \nabla_i g ] = 0$ and gives the desired identity.
\end{proof}

The following lemma expresses that the effect of changing a single bit at time 0 decays exponentially in time. We shall mainly use it for large $t$ in order to integrate $|P_t \nabla_i f|$ near infinity. However, this property of $P_t$ and $\nabla_i$ to commute up to a multiplicative correction in $e^{\kappa t}$ with $\kappa \in \R$ is also important on its own even at small times, $\kappa$ being interpreted as a curvature.

\begin{lemma}[Time-decorrelation] \label{timedecor}
For $i \in \N$, we denote by $\mathrm{pr}_i:\{0, 1\}^{\N}\to\R$ the map $x \mapsto x_i$. For all $t \geq 0$ and $f \in L^2$,
\begin{equation}\label{eq:timedecor}
P_t \nabla_i f = e^{-t} (p-\mathrm{pr}_i) P_t \Big( \frac{\nabla_i f}{p - \mathrm{pr}_i} \Big)  .
\end{equation}
It follows that $\I_i(P_t f) = \E \big[|\nabla_i P_t f| \big]$ decays exponentially in $t$, in that
\[
\I_i(P_t f) \leq 2 e^{-t} \max(p, 1-p) \I_i(f)  .
\]
\end{lemma}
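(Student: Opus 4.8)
The plan is to prove the identity~\eqref{eq:timedecor} first, and then deduce the exponential decay of influences as a straightforward consequence. For the identity, I would start from the explicit formula~\eqref{nabiexpr}, namely $\nabla_i f(x) = (p - x_i)\big(f\circ\sigma_i^1(x) - f\circ\sigma_i^0(x)\big)$. The key observation is that the quantity $h(x) := \frac{\nabla_i f(x)}{p - \mathrm{pr}_i(x)} = f\circ\sigma_i^1(x) - f\circ\sigma_i^0(x)$ does not depend on the $i$-th coordinate. So the content of the lemma is: applying $P_t$ to the product $(p-\mathrm{pr}_i)\cdot h$, where $h$ is independent of coordinate $i$, produces $e^{-t}(p-\mathrm{pr}_i)\cdot P_t h$. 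Morally, the factor $(p-\mathrm{pr}_i)$ is a (non-normalized) eigenfunction of the one-site resampling dynamics with eigenvalue $e^{-t}$: under the rate-1 Poisson clock, $X_i^t$ equals $X_i$ with probability $e^{-t}$ and is a fresh $\mathrm{Ber}(p)$ sample with probability $1-e^{-t}$; since $\E[p-\xi]=0$ for $\xi\sim\mathrm{Ber}(p)$, the fresh-sample contribution vanishes and $\E_x[p - X_i^t] = e^{-t}(p - x_i)$.

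To make this rigorous I would argue as in Lemma~\ref{com}: reduce to $f \in L^2_{loc}$ by density of $L^2_{loc}$ in $L^2$, fix $x$, and condition on the state of the Poisson clock at site $i$. Write $P_t f(x) = \E_x[f(X^t)]$ and split according to whether site $i$ was resampled by time $t$. Because the clocks at distinct sites are independent, conditionally on site $i$'s clock the variables $(X_j^t)_{j\neq i}$ evolve as under $\P_x$ regardless, and $X_i^t$ is either $x_i$ (probability $e^{-t}$) or an independent $\mathrm{Ber}(p)$ variable (probability $1-e^{-t}$). Applying this with $f$ replaced by $(p-\mathrm{pr}_i)\, h$ and using that $h$ ignores coordinate $i$: on the non-resampled event the factor is $(p-x_i)\,\E_x[h(X^t)]$, while on the resampled event the factor is $\E[p-\xi]\cdot(\text{something})=0$. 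Multiplying by the respective probabilities gives exactly $e^{-t}(p-x_i)\,P_t h(x)$, which is~\eqref{eq:timedecor} for local $f$; the general case follows by taking limits, noting both sides are $L^1$-continuous in $f$ (the right-hand side because $1/(p-\mathrm{pr}_i)$ is bounded by $1/\min(p,1-p)$ and $P_t$ is an $L^1$-contraction).

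For the influence bound, I would apply $\I_i(P_t f) = \E[|\nabla_i P_t f|]$, use Lemma~\ref{com} to write $\nabla_i P_t f = P_t \nabla_i f$, then invoke~\eqref{eq:timedecor} to get $|\nabla_i P_t f| = e^{-t}\,|p - \mathrm{pr}_i|\cdot\big|P_t(h)\big|$ where $h = \nabla_i f/(p-\mathrm{pr}_i)$. Bounding $|p-\mathrm{pr}_i|\le\max(p,1-p)$ pointwise and taking expectations, then using that $P_t$ is an $L^1$-contraction with respect to the invariant measure (Jensen), yields $\I_i(P_t f)\le e^{-t}\max(p,1-p)\,\E[|h|]$. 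Finally $\E[|h|] = \E\big[|f\circ\sigma_i^1(X) - f\circ\sigma_i^0(X)|\big]$, and by~\eqref{exprofinfl} this equals $\I_i(f)/(2p(1-p))$; since $2p(1-p)\ge\min(p,1-p)\cdot\tfrac12 \cdot$ (a crude bound) one arrives at the stated constant $2$, or more cleanly one simply bounds $\E[|h|] \le \E[|\nabla_i f|]/\min(p,1-p) \cdot$ (reinterpreting) — in any case the factor $2\max(p,1-p)$ is obtained by combining $|p-\mathrm{pr}_i|\le\max(p,1-p)$ with $\E[|h|]\le\I_i(f)/\min(p,1-p)$ and noting $\max(p,1-p)/\min(p,1-p)\le\max(p,1-p)/(p(1-p))$, then absorbing into the constant $2$.

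The main obstacle is not conceptual but bookkeeping: making the conditioning-on-the-clock argument clean enough that the vanishing of the resampled contribution is visibly $\E[p-\xi]=0$ rather than getting tangled with the $(X_j^t)_{j\neq i}$ dependence, and handling the $L^2_{loc}$-to-$L^2$ passage for both sides of~\eqref{eq:timedecor} simultaneously. The numerical constant in the influence bound also requires a moment's care to land exactly on $2\max(p,1-p)$; I expect the honest route is $|\nabla_i P_t f| = e^{-t}|p-\mathrm{pr}_i|\,|P_t h| \le e^{-t}\max(p,1-p)\,P_t|h|$, then $\E[|\nabla_i P_t f|] \le e^{-t}\max(p,1-p)\,\E[|h|]$ and $\E[|h|] = \tfrac{1}{2p(1-p)}\I_i(f) \le \tfrac{1}{\max(p,1-p)}\cdot 2\,\I_i(f)$ using $2p(1-p)\ge\tfrac12\max(p,1-p)$ — wait, this last inequality is false near $p=1/2$; instead one should use $2p(1-p) = 2\min(p,1-p)\max(p,1-p) \ge \min(p,1-p)$, giving $\E[|h|]\le\I_i(f)/\min(p,1-p)$ and hence $\I_i(P_tf)\le e^{-t}\frac{\max(p,1-p)}{\min(p,1-p)}\I_i(f)$; since $\frac{\max}{\min}\le 2\max(p,1-p)/(2p(1-p))\cdot\ldots$ — the cleanest is just to note $\frac{\max(p,1-p)}{\min(p,1-p)} \le \frac{1}{\min(p,1-p)} \le 2$ only if $\min\ge 1/2$, which fails, so one genuinely needs $\frac{\max(p,1-p)}{\min(p,1-p)}\le 2\max(p,1-p)$, i.e. $\min(p,1-p)\ge 1/2$ — again false; the correct reconciliation is that the paper's bound $2\max(p,1-p)\I_i(f)$ uses a slightly different (weaker, hence valid) chain, and I would simply follow whichever pointwise bound makes $2\max(p,1-p)$ come out, most likely $|\nabla_i P_t f|\le e^{-t}\cdot 2\max(p,1-p)\cdot P_t\big(\tfrac12|h|\big)$ combined with $\E[|h|]=\tfrac{1}{2p(1-p)}\I_i(f)$ and the elementary $\tfrac{1}{2p(1-p)}\cdot\tfrac12\le\tfrac{1}{2\min(p,1-p)}$ — this is the one genuinely fiddly point and deserves to be written out carefully rather than sketched.
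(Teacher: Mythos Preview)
Your proof of the identity~\eqref{eq:timedecor} is correct and is essentially the paper's argument. The paper phrases it slightly more compactly by using the product structure of the semigroup: since $(p-\mathrm{pr}_i)$ depends only on coordinate $i$ and $h := f\circ\sigma_i^1 - f\circ\sigma_i^0$ depends only on the other coordinates, one has $P_t\nabla_i f = P_t\big((p-\mathrm{pr}_i)\,h\big) = P_t(p-\mathrm{pr}_i)\cdot P_t h$, and then $P_t(p-\mathrm{pr}_i)(x) = e^{-t}(p-x_i)$ exactly as you say. Your conditioning-on-the-clock argument is a valid unpacking of the same fact.

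Where you genuinely get stuck is the constant in the influence bound, and the confusion is diagnosable: you bound $|p-\mathrm{pr}_i|$ \emph{pointwise} by $\max(p,1-p)$ and then use the exact identity $\E[|h|] = \I_i(f)/(2p(1-p))$. This produces the constant $\frac{\max(p,1-p)}{2p(1-p)} = \frac{1}{2\min(p,1-p)}$, which for $p\neq 1/2$ is strictly larger than $2\max(p,1-p)$, so no amount of rearranging will rescue it. The paper's fix is to swap where independence and the pointwise bound are used. Since $P_t h$ does not depend on coordinate $i$, the factors $|p-\mathrm{pr}_i|$ and $|P_t h|$ are independent, so
\[
\I_i(P_t f) = e^{-t}\,\E\big[\,|p-\mathrm{pr}_i|\cdot|P_t h|\,\big] = e^{-t}\,\E\big[|p-X_i|\big]\,\E\big[|P_t h|\big] = e^{-t}\cdot 2p(1-p)\cdot \E\big[|P_t h|\big].
\]
Now bound $\E[|P_t h|]\le \E[P_t|h|]=\E[|h|]$ by stationarity, and bound $|h| = |\nabla_i f|/|p-\mathrm{pr}_i| \le |\nabla_i f|/\min(p,1-p)$ \emph{pointwise}. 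This gives the constant $\frac{2p(1-p)}{\min(p,1-p)} = 2\max(p,1-p)$ exactly. In short: use independence to trade $|p-\mathrm{pr}_i|$ for its mean $2p(1-p)$, and save the crude pointwise bound for the denominator in $h$.
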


\begin{proof}
Let $f \in  L^2, i \in \N, t > 0, x \in \{0, 1\}^{\N}$. Let $T_i$ be the first time at which $(X_i^t)_{t \geq 0}$ is resampled. Rewriting $\nabla_i f$ as $(p- \mathrm{pr}_i)(f \circ \sigma_i^1 - f \circ \sigma_i^0)$ as in (\ref{nabiexpr}), and using independence of bit $i$ with $f \circ \sigma_i^1 - f \circ \sigma_i^0$, we have
\[
P_t \nabla_i f = P_t(p-\mathrm{pr}_i) P_t (f \circ \sigma_i^1 - f \circ \sigma_i^0) = P_t (p - \mathrm{pr}_i) P_t \Big( \frac{\nabla_i f}{p - \mathrm{pr}_i} \Big)  ,
\]
and for all $x$ in $\{0, 1\}^{\N}$,
\[
P_t (p - \mathrm{pr}_i)(x) =  p - \E_x[X_i^t] = e^{-t} (p-x_i)  ,
\]
which gives~\eqref{eq:timedecor}.

To obtain the announced inequality on $\I_i(P_t f)$, using the above and the independence of bit $i$ with $P_t (f \circ \sigma_i^1 - f \circ \sigma_i^0)$ we write
\[
\I_i(P_t f) = e^{-t} \E \big[ |P_t (p-\mathrm{pr}_i) P_t (f \circ \sigma_i^1 - f \circ \sigma_i^0) | \big] \leq e^{-t} \E \big[ |p - \mathrm{pr}_i| \big] \E \Big[ P_t \Big| \frac{\nabla_i f}{p - \mathrm{pr}_i}  \Big| \Big] .
\]
Since $\mathrm{pr}_i$ takes values in $\{0,1\}$, $\frac{1}{p - \mathrm{pr}_i}$ is uniformly bounded by $\frac{1}{\min (p,1-p)}$, so
\[
P_t \Big|\frac{\nabla_i f}{p - \mathrm{pr}_i} \Big| \leq \frac{1}{\min(p, 1-p)} P_t| \nabla_i f|.
\]
We conclude by using $\E \big[|p-X_i| \big] = 2 p(1-p)$ and the stationarity of $X^t$.
\end{proof}

\subsection{Hypercontractivity}

By Jensen's inequality, the operator $P_t$ is a contraction from $L^2$ to itself. The following lemma, stated without a proof, expresses the so-called \textbf{hypercontractive} property of $P_t$ that it is actually a contraction from $L^2$ to $L^{q(t)}$, where $q(t) < 2$ when $t > 0$. The explicit form of the constant is stated in \cite[Page~5]{corled12}, and directly implied by Theorems 1.3.2 and 2.8.2 in \cite{abcfgmrs00}. See also \cite[Chapter~6]{garste15} for a proof of the case $p =1/2$.

\begin{lemma}[Hypercontractivity] \label{hclemma}
Let $\rho := 2 \frac{2p-1}{\log(p) - \log(1-p)}$ if $p \neq \frac{1}{2}$ and $\rho := 1$ if $p = \frac{1}{2}$. For all $t \geq 0$ and $f \in L^2$,
\[
\E \big[ (P_tf)^2 \big] \leq \E \big[ |f|^{1 + e^{- 2 \rho t}} \big]^{\frac{2}{1 + e^{- 2 \rho t}}} .
\]
\end{lemma}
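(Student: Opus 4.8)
The plan is to deduce this hypercontractive estimate, which is classical, from a sharp logarithmic Sobolev inequality for the Dirichlet form of the bit-resampling process, using Gross's equivalence between log-Sobolev inequalities and hypercontractivity, after a tensorisation step that reduces everything to a single biased bit.

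First I would reduce to $f \in L^2_{loc}$ by the density argument described just before Lemma~\ref{com}, fixing $m$ so that $f$ depends only on the first $m$ coordinates; then $(P_t)$ acts on $\{0,1\}^m$ as the $m$-fold tensor product of the single-bit resampling semigroup. By Lemma~\ref{dynmr} its generator is $\mathcal L = \sum_{i=1}^m \nabla_i$, and by Lemma~\ref{ipp} (applied with $g = f$) the associated Dirichlet form is
\[
\mathcal E(f,f) := -\E\big[f\,\mathcal L f\big] = \sum_{i=1}^m \E\big[(\nabla_i f)^2\big],
\]
while on a single bit one has $\mathcal E_1(f,f) = p(1-p)\big(f(1)-f(0)\big)^2$.

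The analytic core — and the step I expect to be the main obstacle — is the tight two-point logarithmic Sobolev inequality
\[
\tfrac{\rho}{2}\,\mathrm{Ent}_p\big(f^2\big) \;\le\; \E_p\big[(\nabla f)^2\big] \qquad\text{for every } f : \{0,1\} \to \R ,
\]
where $\mathrm{Ent}_p(h) := \E_p[h\log h] - \E_p[h]\log \E_p[h]$ and $\rho$ is the constant from the statement; equivalently, $\rho/2 = \frac{2p-1}{\log p - \log(1-p)}$ is the optimal log-Sobolev constant of the weighted two-point space, which equals $\tfrac12$ in the symmetric case $p = \tfrac12$. This is the Diaconis--Saloff-Coste two-point computation: after normalising $\E_p[f^2] = 1$ it becomes a one-variable optimisation comparing entropy with energy as a function of the ratio $f(1)/f(0)$, but pinning down the constant exactly is delicate, since for $p \ne \tfrac12$ there is no closed-form extremiser and one must argue via monotonicity of an auxiliary function rather than by solving an Euler--Lagrange equation. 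I note that any positive $\rho$ already suffices for Theorem~\ref{th:genbks}, as it enters only through $\theta(p,t) = \tanh(\rho t/2) > 0$; only the explicit value of $\theta$ needs the optimal constant.

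Granting the two-point inequality, I would tensorise: entropy is subadditive over product measures, $\mathrm{Ent}_{\mu^{\otimes m}}(f^2) \le \sum_{i=1}^m \E\big[\mathrm{Ent}_{p,i}(f^2)\big]$ with $\mathrm{Ent}_{p,i}$ acting on the $i$-th coordinate with the others frozen, while the Dirichlet forms add; hence $\tfrac{\rho}{2}\,\mathrm{Ent}(f^2) \le \mathcal E(f,f)$ holds on $\{0,1\}^m$ with the same constant, uniformly in $m$. Gross's theorem then converts this into the hypercontractive bound $\|P_t f\|_{q(t)} \le \|f\|_2$ with $q(t) = 1 + e^{2\rho t}$. (This implication also has a short self-contained proof: with $u = P_t f \ge 0$ and $F(t) = \|u\|_{q(t)}$, differentiation combined with the Stroock--Varopoulos bound $\E[u^{q-1}\mathcal L u] \le -\frac{4(q-1)}{q^2}\,\mathcal E(u^{q/2}, u^{q/2})$ and the log-Sobolev inequality applied to $u^{q/2}$ yields $F'(t) \le 0$ once $q'(t) = 2\rho(q(t)-1)$, and $F(0) = \|f\|_2$.) Finally, $P_t$ is self-adjoint by~\eqref{eq:Pt_symmetry}, so duality turns $\|P_t\|_{L^2 \to L^{q(t)}} \le 1$ into $\|P_t\|_{L^{q'(t)} \to L^2} \le 1$ with $q'(t) = \frac{q(t)}{q(t)-1} = 1 + e^{-2\rho t}$; that is $\|P_t f\|_2 \le \|f\|_{1 + e^{-2\rho t}}$, which squared is exactly $\E\big[(P_t f)^2\big] \le \E\big[|f|^{1 + e^{-2\rho t}}\big]^{2/(1 + e^{-2\rho t})}$. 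Extending from $L^2_{loc}$ to all of $L^2$ via monotone convergence and the $L^2$-contractivity of $P_t$ then finishes the proof.
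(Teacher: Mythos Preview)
The paper does not prove this lemma; it is stated without proof, with citations to \cite{corled12}, \cite{abcfgmrs00}, and \cite{garste15} for the general biased case and the explicit constant. Your sketch is correct and follows exactly the standard route those references take: the sharp two-point log-Sobolev inequality (the Diaconis--Saloff-Coste computation giving $\rho/2$ as the optimal constant), tensorisation over independent coordinates, Gross's equivalence to obtain $\|P_t\|_{L^2\to L^{1+e^{2\rho t}}}\le 1$, and then duality via the self-adjointness \eqref{eq:Pt_symmetry} to flip this into the stated $L^{1+e^{-2\rho t}}\to L^2$ contraction. You are also right that the delicate step is the exact two-point constant for $p\ne\tfrac12$, and that any positive $\rho$ would already suffice for the qualitative conclusion of Theorem~\ref{th:genbks}.
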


Combining hypercontractivity with Hölder's inequality, we can obtain the following corollary, see also \cite[Corollary~10]{roshan20}.

\begin{corollary} \label{hccor}
Let $f \in L^2$ and let $\rho = \rho(p)$ be as in the statement of Lemma \ref{hclemma}. For all $t \geq 0$,
\[
\E_p \big[ (P_t f)^2 \big] \leq \E_p \big[ f^2 \big]^{1 - \tanh(\rho t)} \E_p \big[ |f| \big] ^{2 \tanh(\rho t)} .
\]
\end{corollary}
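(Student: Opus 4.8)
The plan is to combine the hypercontractive bound of Lemma~\ref{hclemma} with the standard log-convexity (``interpolation'') of $L^r$-norms on a probability space, and then to verify that the exponents produced match those in the statement. Since $\rho=\rho(p)>0$, and the case $t=0$ is trivial (both sides equal $\E_p[f^2]$), assume $t>0$ and set $q := 1+e^{-2\rho t}$, which then lies strictly in $(1,2)$. Lemma~\ref{hclemma} reads precisely
\[
\E_p\big[(P_t f)^2\big]\leq\E_p\big[|f|^{q}\big]^{2/q},
\]
so it suffices to bound $\E_p[|f|^q]^{2/q}$ from above by $\E_p[f^2]^{1-\tanh(\rho t)}\,\E_p[|f|]^{2\tanh(\rho t)}$.

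For that, I would interpolate $L^q$ between $L^1$ and $L^2$: writing $|f|^q=|f|^{2-q}\cdot|f|^{2(q-1)}$ and applying H\"older's inequality with the conjugate exponents $\tfrac{1}{2-q}$ and $\tfrac{1}{q-1}$ (conjugate because $(2-q)+(q-1)=1$, and both exceeding $1$ since $1<q<2$) gives
\[
\E_p\big[|f|^{q}\big]\leq\E_p\big[|f|\big]^{2-q}\,\E_p\big[f^{2}\big]^{q-1},
\]
where finiteness of all terms uses that $\E_p$ is a probability measure and $f\in L^2$ (the degenerate case $f=0$ a.e.\ being immediate). Raising to the power $2/q$ yields
\[
\E_p\big[|f|^{q}\big]^{2/q}\leq\E_p\big[|f|\big]^{2(2-q)/q}\,\E_p\big[f^{2}\big]^{2(q-1)/q}.
\]

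It remains only to match exponents. Substituting $q-1=e^{-2\rho t}$ and using $\tanh(\rho t)=\tfrac{1-e^{-2\rho t}}{1+e^{-2\rho t}}$, one computes $\tfrac{2(q-1)}{q}=\tfrac{2e^{-2\rho t}}{1+e^{-2\rho t}}=1-\tanh(\rho t)$ and $\tfrac{2(2-q)}{q}=\tfrac{2(1-e^{-2\rho t})}{1+e^{-2\rho t}}=2\tanh(\rho t)$; chaining the three displays then gives the claim. There is no genuine obstacle here --- the corollary is essentially a one-line consequence of Lemma~\ref{hclemma} and H\"older's inequality --- and the only points deserving care are the exponent bookkeeping just carried out and the verification that $2-q$ and $q-1$ are nonnegative with sum $1$. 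As sanity checks: the bound is an equality when $t=0$, and it degenerates to $\E_p[(P_tf)^2]\leq\E_p[|f|]^2$ as $t\to\infty$, consistent with $P_tf\to\E_p[f]$.
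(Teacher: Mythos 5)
Your proposal is correct and follows essentially the same route as the paper: apply Lemma~\ref{hclemma}, then interpolate $L^q$ between $L^1$ and $L^2$ via H\"older's inequality (the paper writes the same split as $|f|^{2e^{-2\rho t}}\cdot|f|^{1-e^{-2\rho t}}$ with conjugate exponents $e^{2\rho t}$ and $(1-e^{-2\rho t})^{-1}$, which is your $1/(q-1)$ and $1/(2-q)$ in different notation), and finally match exponents using the identity for $\tanh(\rho t)$.
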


\begin{proof}
Let $t > 0$ and $f \in L^2$ and let $\rho$ be as required. By Lemma \ref{hclemma},
\[
\E \big[ (P_t f)^2 \big] \leq \E \big[ |f|^{1 + e^{-2 \rho t}} \big]^{\frac{2}{1 + e^{-2 \rho t}}} .
\]
Applying Hölder's inequality to the functions $|f|^{2 e^{- 2 \rho t}}$, $|f|^{1 - e^{- 2 \rho t}}$ and to the conjugate exponents $e^{2 \rho t}$, $(1 - e^{- 2 \rho t})^{-1}$, we obtain
\[
\E \big[ |f|^{1 + e^{-2 \rho t}} \big] \leq \E \big[ |f|^2\big]^{e^{-2 \rho t}} \E \big[ |f|\big]^{1 - e^{- 2 \rho t}} .
\]
Combining these last two inequalities yields
\[
\E \big[(P_t f)^2 \big] \leq \E \big[|f^2| \big]^{\frac{2e^{-2 \rho t}}{1 + e^{-2 \rho t}}} \E \big[ |f| \big]^{2 \frac{1 - e^{- 2 \rho t}}{1 + e^{- 2 \rho t}}},
\]
which is the desired inequality.
\end{proof}

\subsection{Proof of the generalized BKS theorem}

We hereby prove Theorem~\ref{th:genbks}. This proof is similar to the one given in~\cite{roshan20}, but differs in that we replace a martingale argument by splitting $P_s$ into $P_t P_{s-t}$ at the right place. We also invoke the Cauchy-Schwarz inequality before and after using Corollary~\ref{hccor} in order to deal with different functions $f$ and $g$.

\begin{proof}[Proof of Theorem \ref{th:genbks}] \label{proofofgenbks}

Let $f, g \in L^2_{loc}$. Any sum will be over $i \in \N$ in this proof and, $f, g$ being local, these sums will always be finite.
Recall from~\eqref{eq:cov_id} that $\Cov(f(X), g(X^{2t})) = \Cov(P_t f, P_t g)$.
Consider the map $t \mapsto \E[(P_t f)(P_t g)]$. By Lemma~\ref{dynmr}, differentiation of a product of two functions and the dominated convergence theorem, the map is twice differentiable and hence $C^1$. So, we can use integration by parts (Lemma~\ref{ipp}), then commutativity (Lemma~\ref{com}) to further compute the derivative as follows:
\[
\frac{d}{dt} \E\big[(P_t f)(P_t g)\big] = \sum_{i} \E\big[P_t f (\nabla_i P_t g) + (\nabla_i P_t f) P_t g\big] = - 2 \sum_i \E\big[(P_t \nabla_i f)(P_t \nabla_i g)\big] .
\]
Moreover, $P_t f$ converges to $\E[f]$ almost surely and thus in $L^2$ when $t \rightarrow \infty$, and similarly for $g$. When integrating from $t$ to $+ \infty$, we obtain
\begin{align} \label{intformula}
\Cov(P_t f, P_t g) &= - \big( \lim_{s \rightarrow + \infty}\E[(P_s f) P_s g)] - \E[P_t f P_t g] \big) \nonumber \\
&= 2 \int_t^{\infty} \sum_i \E \big[(P_s \nabla_i f)(P_s \nabla_i g) \big] ds .
\end{align}
By the Cauchy-Schwarz inequality,
\[
\E\big[(P_s \nabla_i f)(P_s \nabla_i g)\big] \leq \sqrt{\E[(P_s \nabla_i f)^2] \E[(P_s \nabla_i g)^2]} .
\]
By the semigroup property (\ref{semi}), for all $s \geq t$, $P_s \nabla_i f = P_t (P_{s-t} \nabla_i f)$. Applying Corollary \ref{hccor} and commutativity again yields
\[
\E \big[(P_s\nabla_i f)^2 \big] \leq \E \big[(P_{s-t} \nabla_i f)^2 \big]^{1 - \tanh(\rho t)} \I_i(P_{s-t} f)^{2 \tanh(\rho t)} .
\]
After applying this inequality to $P_s \nabla_i g$ as well and using it in (\ref{intformula}), we obtain
\[
\Cov(P_t f, P_t g) \leq 2 \int_t^{\infty} \sum_i \sqrt{\E[(P_{s-t} \nabla_i f)^2] \E[(P_{s-t} \nabla_i g)^2]}^{1 - \tanh(\rho t)} \big( \I_i(P_{s-t} f) \I_i(P_{s-t} g) \big)^{\tanh(\rho t)} ds .
\]

We then apply Hölder's inequality for the Lebesgue and counting measure $(\int \sum)$, with conjugate exponents $1/(1 - \tanh(\rho t))$ and $1/(\tanh(\rho t))$, resulting in
\[
\begin{split}
\Cov(P_t f, P_t g) \leq \Big(2 \int_t^{\infty} \sum_i \sqrt{\E[(P_{s-t} \nabla_i f)^2] \E[(P_{s-t} \nabla_i g)^2] \big]} ds \Big)^{1 - \tanh(\rho t)}\\
\times \Big(2 \int_t^{\infty} \sum_i \I_i(P_{s-t} f) \I_i(P_{s-t} g)  ds \Big)^{\tanh(\rho t)} .
\end{split}
\]

Consider the first integral in the above expression. After the change of variable $u = s-t$ and applying the Cauchy-Schwarz inequality to $\int \sum$ again, 
$$
2\int_t^{\infty} \sum_i \sqrt{\E\big[(P_{s- t} \nabla_i f)^2 \big] \E \big[(P_{s-t} \nabla_i g)^2 \big]} ds \leq \sqrt{2\int_0^{\infty} \sum_i \E[(P_u \nabla_i f)^2] du  } \sqrt{2\int_0^{\infty} \sum_i \E[(P_u \nabla_i g)^2] du},
$$
which by~\eqref{intformula} we identify as $\sqrt{\Var(f) \Var(g)}$.
We can upper-bound the second integral and relate it to the sum of $I_i(f) I_i(g)$ by the second inequality in Lemma \ref{timedecor}:
\[
2\int_t^{\infty} \sum_i \I_i(P_{s-t} f) \I_i(P_{s-t} g) ds \leq \big(2 \max(p, 1-p) \big)^2\int_{0}^{\infty} 2 e^{-2u} du \bigg(\sum_i \I_i(f) \I_i(g) \bigg) .
\]
The integral of $2 e^{-2u}du$ being $1$, and since $2 \max(p, 1 - p) \leq 2$, we deduce for all $f, g \in L^2_{loc}$, for all $t \geq 0$,
\[
\Cov \big( f(X), g(X^{2t}) \big) \leq \sqrt{\Var(f) \Var(g) }^{1 - \tanh(\rho t)} \left( 4 \sum_{i\in\N} \I_i(f) \I_i(g) \right)^{\tanh(\rho t)} .
\]

Finally, the proof extends to every $f, g \in L^2$ by density using approximation of $f \in L^2$ by $f_k:x \mapsto \E[f| X_0 = x_0, \dots, X_k = x_k]$ when $k \rightarrow +\infty$ (and respectively for $g$). Indeed, $\I_i(f_k) \leq \I_i(f)$ by Jensen's inequality, and $\Var(f_k) \rightarrow \Var(f)$ and $\Var(P_t f_k) \rightarrow \Var(P_t f)$ by the $L^2$ martingale convergence theorem. (Note that $P_t f_k = (P_t f)_k$ by stationarity of the Markov process.)
\end{proof}

We end this section by discussing the limitations in extending the above proof to other semigroups (say, of Markov reversible processes). A key property of the bit-resampling dynamics is that it acts independently on the bits, which is at the core of the proofs of the commutativity (Lemma \ref{com}), integration by parts (Lemma \ref{ipp}) and exponential decay of $\I_i(P_t f)$ (Lemma \ref{timedecor}). We expect these properties to hold with corrections for small values of $t$ for dynamics where the information propagates slowly, neighbor-wise for example.

The hypercontractive properties of the semigroup are also essential to the proof. An example of a resampling dynamics that is not hypercontractive is that of a geometric random variable resampled at exponential rate (see Section~\ref{se:geometric resampling not hyperc} below). This is why we later encode geometric random variables through $\{0, 1\}$-valued bits and run the bit-resampling dynamics, in order to obtain noise sensitivity of geometric last-passage percolation (for that specific noise). For a semigroup satisfying a version of hypercontractivity with corrections, one could possibly extract a noise sensitivity criterion similar to the BKS theorem. In the case of independent Ornstein-Uhlenbeck processes, hypercontractivity is known, so the following theorem follows with the same proof as Theorem~\ref{th:genbks}.

\begin{theorem}\label{th:gaussbks}
Let $(X_i^t)_{i \in \N, t \geq 0}$ be a family of i.i.d.\ Ornstein-Uhlenbeck processes started from their invariant measure. Then, for every sequence $(f_n)_{n\ge1}$ of square integrable local\footnote{We say that $f : \R^{\N} \rightarrow \R$ is local if there is an $m$ such that $f$ depends only on the first $m$ coordinates.} functions $f_n:\R^{\N} \rightarrow \R$,
\[
\lim_{n \rightarrow\infty} \frac{\sum_{i \in \N} \E[|\p_i f_n|]^2}{\Var(f_n)} = 0 \quad\Rightarrow\quad\lim_{n\to\infty}\Corr\big(f_n(X), f_n(X^t)\big)=0.
\]
\end{theorem}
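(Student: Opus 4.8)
The plan is to rerun the proof of Theorem~\ref{th:genbks} essentially word for word, with the bit-resampling semigroup replaced by the Ornstein-Uhlenbeck semigroup $(P_t)_{t\ge0}$ acting on $L^2(\gamma)$, where $\gamma$ is the standard Gaussian measure on $\R^\N$ (the common invariant law of the coordinate processes), the difference operators $\nabla_i$ replaced by the partial derivatives $\p_i$, and the influence recast as $\I_i(f):=\E[|\p_i f|]$. Since each $f_n$ is local, the computation takes place in finitely many coordinates, and since $P_s f\in C^\infty$ with all derivatives in every $L^q(\gamma)$ for $s>0$, all the differential manipulations below are legitimate when performed at times $s>0$.

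First I would record the Ornstein-Uhlenbeck counterparts of Lemmas~\ref{com}--\ref{timedecor} and of Corollary~\ref{hccor}. The classical intertwining relation $\p_i P_t = e^{-t}P_t\p_i$ plays the combined role of the commutativity lemma (Lemma~\ref{com}) and of the time-decorrelation lemma (Lemma~\ref{timedecor}): together with the fact that $P_t$ is a Markov operator, hence an $L^1(\gamma)$-contraction satisfying Jensen's inequality, it gives at once $\I_i(P_t f)=e^{-t}\E[|P_t\p_i f|]\le e^{-t}\I_i(f)$. The heat equation (Lemma~\ref{dynmr}) becomes $\frac{d}{dt}P_t f = LP_t f$ with $L=\sum_i(\p_i^2-x_i\p_i)$ the Ornstein-Uhlenbeck generator, and integration by parts (Lemma~\ref{ipp}) is the Gaussian Dirichlet-form identity $\E[f(-L)g]=\sum_i\E[\p_i f\,\p_i g]$. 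Finally, hypercontractivity is Nelson's theorem, $\E[(P_tf)^2]\le\E[|f|^{1+e^{-2t}}]^{2/(1+e^{-2t})}$, from which the Ornstein-Uhlenbeck analogue of Corollary~\ref{hccor}, namely $\E[(P_tf)^2]\le\E[f^2]^{1-\tanh t}\,\E[|f|]^{2\tanh t}$, follows by the same Hölder step; here the spectral gap equals $1$, which is why the constant $\rho$ of Theorem~\ref{th:genbks} does not appear.

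With these in place I would reproduce the argument of the proof of Theorem~\ref{th:genbks} in the case $f=g=f_n$. Differentiating $t\mapsto\E[(P_tf)^2]$ via the heat equation and the Dirichlet identity gives $\frac{d}{dt}\E[(P_tf)^2]=-2\sum_i\E[(\p_iP_tf)^2]$; since $P_sf\to\E[f]$ in $L^2(\gamma)$ as $s\to\infty$, integrating from $t$ to $\infty$ yields
\[
\Cov\big(f(X),f(X^{2t})\big)=\Cov(P_tf,P_tf)=2\int_t^{\infty}\sum_i\E\big[(\p_iP_sf)^2\big]\,ds,
\]
and in particular $\Var(f)=2\int_0^{\infty}\sum_i\E[(\p_iP_uf)^2]\,du$. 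Using $\p_iP_sf=e^{-t}P_t(\p_iP_{s-t}f)$ (from the intertwining relation and the semigroup property), applying the Ornstein-Uhlenbeck analogue of Corollary~\ref{hccor} to $\p_iP_{s-t}f$, discarding the harmless factor $e^{-2t}\le1$, and then applying Hölder in the pair $(i,s)$ with conjugate exponents $1/(1-\tanh t)$ and $1/\tanh t$, I obtain
\[
\Cov\big(f(X),f(X^{2t})\big)\le\Big(2\int_0^{\infty}\sum_i\E[(\p_iP_uf)^2]\,du\Big)^{1-\tanh t}\Big(2\int_0^{\infty}\sum_i\I_i(P_uf)^2\,du\Big)^{\tanh t}.
\]
The first bracket equals $\Var(f)$, and the second is at most $2\int_0^{\infty}e^{-2u}\,du\sum_i\I_i(f)^2=\sum_i\I_i(f)^2$ by the decay estimate above. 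Dividing by $\Var(f)$ and relabelling $2t$ as $t$ gives $\Corr(f_n(X),f_n(X^t))\le\big(\sum_i\I_i(f_n)^2/\Var(f_n)\big)^{\tanh(t/2)}$, which tends to $0$ for every fixed $t>0$ under the hypothesis; this is exactly noise sensitivity.

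The main obstacle I anticipate is not this computation, which is routine once the lemmas are transported, but the bookkeeping needed to justify the Ornstein-Uhlenbeck semigroup calculus at the stated level of generality: the generator identity and the Gaussian integration by parts are a priori valid only on a suitable domain, and the influence $\I_i(f_n)=\E[|\p_i f_n|]$ presupposes (weak) differentiability of $f_n$. The standard remedies should suffice: exploit the instantaneous smoothing $P_sf\in C^\infty$ for $s>0$ so that every step performed at times $s\ge t>0$ is classical; obtain $\Var(f)=2\int_0^\infty\sum_i\E[(\p_iP_uf)^2]\,du$ from the fundamental theorem of calculus on $(\eps,\infty)$ and strong $L^2$-continuity of $(P_s)$ at $0$, together with finiteness of the total integral; and note that if some $f_n$ fails to be weakly differentiable then $\I_i(f_n)=+\infty$ and the hypothesis cannot hold, so there is nothing to prove.
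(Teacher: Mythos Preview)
Your proposal is correct and is exactly the approach the paper indicates: it states that Theorem~\ref{th:gaussbks} ``follows with the same proof as Theorem~\ref{th:genbks}'' and gives no further argument, so your translation of Lemmas~\ref{com}--\ref{timedecor} into the intertwining relation $\p_i P_t=e^{-t}P_t\p_i$, Gaussian integration by parts, and Nelson hypercontractivity is precisely what is intended. Your treatment of the regularity issues (smoothing for $s>0$, locality, and the convention $\I_i(f_n)=+\infty$ when $f_n$ is not weakly differentiable) is more careful than anything the paper spells out, and the resulting quantitative bound $\Corr\le\big(\sum_i\I_i(f_n)^2/\Var(f_n)\big)^{\tanh(t/2)}$ matches the expected constant $\rho=1$.
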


Satisfying a log-Sobolev inequality is equivalent to hypercontractivity by Gross's theorem; we refer to \cite[Chapter~5]{led01} or \cite[Appendix~B]{cha14} for a proof of a log-Sobolev inequality in the case of an Ornstein-Uhlenbeck dynamics (this also relies on properties such as commutativity). Diaconis and Saloff-Coste \cite{diasal96} established log-Sobolev inequalities for a number of Markov chains on finite sets, so one could obtain similar criteria of noise sensitivity in these contexts up to managing the long-time behavior of the chains and providing an analogue to the sum of influences squared.

\subsection{Non-hypercontractive semi-groups}
 \label{se:geometric resampling not hyperc}
 
The site-resampling semigroup for the geometric distribution is not hypercontractive. We sketch an argument to prove this for functions of a single geometric variable. We will omit a few computations related to computing the moments of a geometric random variable, and we will exchange derivative and expectations without justification.

Let $G$ be a geometric random variable of parameter $p$ and let $G^t$ be obtained by jumping to an independent geometric random variable of the same parameter at every mark of a unit Poisson point process.
We let $P_t$ denote the associated Markov semigroup, which for $f : \N \rightarrow \R$ and $k \in \N$ is defined by
\[
P_t f(k) = \E[f(G^t) |G^0 = k]
\]
Assume, by contradiction, that there exists $\rho > 0$ such that for all $f : \N \rightarrow \R$ with $\E[f(G)^2] < \infty$,
\[
\| P_t f \|_2 \leq \|f\|_{1 + e^{- 2 \rho t}} ,
\]
Let $f :\N \rightarrow \R$ be a positive function such that $\E[f(G)^2]$ is finite. Let $\phi(t) := \log\|f\|_{1 + e^{- 2 \rho t}}^2$ and $\psi(t):= \log \|P_t f \|_2^2$, so that $\phi(t) \geq \psi(0)$ for all $t \geq 0$. Since $\phi(0) = \psi(0) = \log \E[f^2]$, we deduce $\phi'(0) \geq \psi'(0)$. Computing both derivatives yields
\begin{equation} \label{eqn:lsi}
\rho(\E[f^2 \log(f^2)] - \E[f^2] \log \E[f^2]) \leq \Var(f) .
\end{equation}
where the expectations and variance are with respect to the law of $G$. We then let $u > 0$ be a small real number, and we let $f_u(k) := \sqrt{\frac{1-u}{1-p}}^k$ for all $k \in \N$. We have, when $u \rightarrow 0$,
\[
\E[f_u^2 \log(f_u^2)] - \E[f_u^2] \log \E[f_u^2] = 2p \log \left(\sqrt{\frac{1 - u}{1 - p}}\right)(1-u)u^{-2} - \frac{p}{u} \log(\frac{p}{u}) \sim \frac{p \log(\frac{1}{1-p})}{u^2} ,
\]
\[
\Var(f_u) = \frac{p}{u} - \frac{p^2}{(1 - \sqrt{(1 - p)(1 - u)})^2} \sim \frac{p}{u}
\]
Thus, when $u \rightarrow 0$,
\[
\frac{\E[f_u^2 \log(f_u^2)] - \E[f_u^2] \log \E[f_u^2]}{\Var(f_u)} \rightarrow + \infty ,
\]
which contradicts (\ref{eqn:lsi}). Therefore, the site-resampling semigroup is not hypercontractive for the geometric distribution. 

\section{Stationary last-passage percolation} \label{se:stationarylpp}

In this section we shall review the probabilistic approach to the study of exactly solvable models. We will introduce a stationary version of last-passage percolation, and expand on some of the properties arising from the coupling approach, that will be instrumental in subsequent sections. This approach dates back to work of Cator and Groeneboom~\cite{catgro06}, and Bal\'azs, Cator and Sepp\"al\"ainen~\cite{balcatsep06}, and allows one to compute an explicit formula for the so-called shape function and establish the $\chi=1/3$ and $\xi=2/3$ fluctuation exponents. 

We refer the reader to~\cite{sep09,sep18} for a more thorough introduction to last-passage percolation with geometric and exponential weights.
The most recent developments in this regard, that will be used here, come from~\cite{balbussep20,grojanras25}. The constructions we give below are analogous to those in~\cite{balbussep20,grojanras25}, and we shall refer to the latter for proofs of key statements. We avoid as much as possible treating these key statements as black boxes, in an attempt for Sections \ref{se:stationarylpp} and \ref{se:probageod} to be understandable without extensive prerequisites.
We start out by listing some of the notation that will recur in the next few sections.

\subsection{Some notation}

For points $u = (u_1, u_2)$ and $v = (v_1, v_2)$ in $\Z^2$ we write $u\le v$ if $u_1\leq v_1$ and $u_2\leq v_2$; we write $u<v$ if $u_1<v_1$ and $u_2<v_2$. Given $u\leq v$ we let $R_{u, v}$ be the rectangle $[u_1, v_1] \times [u_2, v_2]$. We let $\e_1=(1,0)$ and $\e_2=(0,1)$ denote the coordinate directions, and let
$$
\e_+ := \e_1+\e_2.
$$
For $u,v\in\Z^2$ we occasionally write $[u, v]$ for the line segment from $u$ to $v$, i.e.\ $[u,v]=\{tu+(1-t)v:t\in[0,1]\}$. We shall refer to the line segment $[0,n\e_+]$ as the {\bf main diagonal} of the square $R_{0, n \e_+}$ and to the line segment $[n\e_1, n \e_2]$ as its {\bf transverse diagonal}.
For every $v = (v_1, v_2) \in \Z^2$, let
\begin{eqnarray*}
H_v \text{ be the horizontal line } \R \times \{v_2\}\\ 
\text{ and } V_v \text{ the vertical line }\{v_1\} \times \R.
\end{eqnarray*}
We denote by $|v| := \sqrt{v_1^2 + v_2^2}$ the Euclidean norm of $v$ and $|v|_1 := |v_1| + |v_2|$ its $\ell^1$ norm.



In the following, for each $\lambda\in(0,1)$, we introduce a collection of last-passage times $\{T(\lambda;x,y):x,y\in\Z^2\}$ where $T(\lambda;x,y)$ has the weights along the bottom-left boundary of the rectangle $R_{x,y}$ replaced by another set of weights. The parameter $\lambda$ indexing the boundary last-passage times is meant to correspond to the vector $\u_{\lambda} := \lambda \e_1 + (1 - \lambda) \e_2$, which we later interpret as a `preferred' or `characteristic' direction of the geodesics associated to the boundary travel times at parameter $\lambda$.

The construction below will first be carried out to obtain a collection of boundary travel times $T(\lambda;x,y)$ on the quarter plane $u+\Z_{\ge0}^2$, for some arbitrary reference vertex $u\in\Z^2$. It will thereafter be extended to all of $\Z^2$ through a limiting argument. Let us remark already that the full-plane extension of the boundary travel times is not essential for this paper. It is however convenient and will allow us to give a geometric interpretation of the boundary weights in terms of so-called Busemann functions, that have become an essential tool in the study of geodesics in planar growth models.

Finally, recall that we adopt the convention that $\N=\{0,1,2,\ldots\}$, and that a random variable $G$ has a {\bf geometric distribution} with parameter $p \in (0, 1)$ if
\[
\P(G = k) = p (1-p)^{k} \quad\text{for }k\in\N .
\]

\subsection{Construction of stationary last-passage times}

Let $u\in\Z^2$ and $\lambda\in(0,1)$ be fixed. Let $(\omega_x)_{x\in\Z^2}$ be i.i.d.\ geometrically distributed with parameter $p$, and let $T(x,y)$ be defined for $x\le y$ as before. For $p\in(0,1)$ and $\lambda\in(0,1)$ we define $q:(0,1)\to(0,p)$ as
$$
q(\lambda):=\frac{p\lambda+p\sqrt{(1-p)\lambda(1-\lambda)}}{1-p+p\lambda+2\sqrt{(1-p)\lambda(1-\lambda)}}.
$$
We note that $q(0)=0$, $q(1)=p$ and that $q(\lambda)$ is continuously differentiable on $(0,1)$ with positive derivative
\begin{equation}
q'(\lambda)=\frac{p(1-p)}{2\sqrt{(1-p)\lambda(1-\lambda)}\big(\sqrt{\lambda}+\sqrt{(1-p)\lambda(1-\lambda)}\big)^2}. \label{eqn: derivative of q(lambda)}
\end{equation}
Hence, $q(\lambda)$ is a bijection from $(0,1)$ to $(0,p)$. Finally, we set
$$
p_H(\lambda):=q(\lambda)\quad\text{and}\quad p_V(\lambda):=1-\frac{1-p}{1-q(\lambda)}.
$$
We note that $p_H(\lambda)$ is increasing (from $0$ to $p$) and $p_V(\lambda)$ is decreasing (from $p$ to $0$) in $\lambda$.

Next, we construct the set of boundary weights $\{\omega_x^H(\lambda),\omega_x^V(\lambda):x\in u+\Z_{\ge0}^2\}$ through an auxiliary process $L_x^\lambda$ defined for $x\in u+\Z_{\ge0}\times\Z$. Let $\{\omega_{u+j\e_2}^V(\lambda):j\in\Z\}$ be i.i.d.\ $\textrm{Geom}(p_V(\lambda))$ and independent of the bulk weights $(\omega_x)_{x\in\Z^2}$ that define $T(x,y)$. Set, for $j\in\Z$,
$$
L_\lambda(u)=0\quad\text{and}\quad L_\lambda(u+j\e_2)-L_\lambda(u+(j-1)\e_2)=\omega^V_{u+j\e_2}(\lambda).
$$
(Note that $L_\lambda(u+j\e_2)<0$ for $j<0$.) For $x\in u+\Z_{>0}\times\Z$ we set
\begin{equation}\label{eq:L_lambda}
L_\lambda(x):=\sup_{j\le x_2-u_2}\big[L_\lambda(u+j\e_2)+T(u+\e_1+j\e_2,x)\big].
\end{equation}
(The supremum is attained for finite $j$ since the weights along the boundary stochastically dominate the weights in the bulk.) Finally, for $x\in u+\Z_{>0}\times\Z$ we define boundary weights by
\begin{equation}\label{eq:boundary}
\begin{aligned}
\omega^H_x(\lambda)&:=L_\lambda(x)-L_\lambda(x-\e_1),\\
\omega^V_x(\lambda)&:=L_\lambda(x)-L_\lambda(x-\e_2),
\end{aligned}
\end{equation}
and the {\bf boundary travel time} or {\bf last-passage time} for $u\le x\le y$ as
\begin{equation}\label{eq:T_lambda}
T(\lambda;x,y):=\max\bigg\{\sum_{i=1}^k\omega^H_{x+i\e_1}(\lambda)+T(x+k\e_1+\e_2,y)\bigg\}\bigvee\max\bigg\{\sum_{j=1}^\ell\omega^V_{x+j\e_2}(\lambda)+T(x+\e_1+\ell\e_2,y)\bigg\}.
\end{equation}
We remark that we could equivalently define the boundary travel time by first defining the travel time of a directed path $\gamma$ from $x$ to $y$ as
\begin{equation}\label{eq:alternative T_lambda}
T(\lambda; \gamma) :=  \sum_{z \in H_x \cap \gamma \backslash \{x\} }\omg_z^H(\lambda) + \sum_{z \in V_x \cap \gamma \backslash \{x\} } \omg_z^V(\lambda) + \sum_{z \in R_{x + \e_+, y} \cap \gamma} \omg_z,
\end{equation}
and then maximising over all directed paths from $x$ to $y$.

As mentioned before, boundary travel times are here defined with respect to a base vertex $u$, but can be extended in a consistent way to the full plane. For this reason, $u$ is not explicit in the notation.

Several useful properties of the boundary travel times are straightforward from the construction. First, note that any path attaining the supremum in~\eqref{eq:L_lambda} must visit either $x-\e_1$ or $x-\e_2$, and hence that
$$
L_\lambda(x)=\max\{L_\lambda(x-\e_1)+\omega_x,L_\lambda(x-\e_2)+\omega_x\},
$$
which can be rewritten as the identity
\begin{equation}\label{eq:domination}
\omega_x=\min\{\omega_x^H(\lambda),\omega^V_x(\lambda)\}.
\end{equation}
An analogous argument shows, for $u\le x\le y$, that
$$
L_\lambda(y)=\max_{1\le i\le y_1-x_1}\big\{L_\lambda(x+i\e_1)+T(x+i\e_1+\e_2,y)\big\}\bigvee\max_{1\le j\le y_2-x_2}\big\{L_\lambda(x+j\e_2)+T(x+\e_1+j\e_2,y)\big\}.
$$
Subtracting $L_\lambda(x)$ from both sides gives, using~\eqref{eq:boundary} and~\eqref{eq:T_lambda}, that
$$
T(\lambda;x,y)=L_\lambda(y)-L_\lambda(x),
$$
and hence that the boundary travel times are additive, meaning that for $x\le y\le z$ in $u+\Z_{\ge0}^2$ we have
\begin{equation}\label{eq:additivity}
T(\lambda;x,z)=T(\lambda;x,y)+T(\lambda;y,z).
\end{equation}

More remarkable properties of the above construction can be obtained via techniques from queuing theory. Indeed, interpreting the boundary weights $(\omega^V_{u+j\e_2}(\lambda))_{j\in\Z}$ as inter-arrival times and the bulk weights $(\omega_{u+\e_1+j\e_2})_{j\in\Z}$ as service times, the weights $(\omega^V_{u+\e_1+j\e_2}(\lambda))_{j\in\Z}$ can be interpreted as inter-departure times and $(\omega^H_{u+\e_1+j\e_2}(\lambda))_{j\in\Z}$ as the sojourn times of a single server queue at equilibrium. This connection is by now classical, and we summarise the consequences thereof in the following theorem.

\begin{theorem}\label{thm:boundary1}
For every $p\in(0,1)$, $\lambda\in(0,1)$ and $u\in\Z^2$, we have for every $x\in u+\Z_{\ge0}^2$ that
\begin{enumerate}[label=(\roman*)]
    \item (stationarity) $\{T(\lambda;x,x+y):y\in\Z_{\ge0}^2\}\stackrel{d}{=}\{T(\lambda;u,u+y):y\in\Z_{\ge0}^2\}$;
    \item (distribution) $\omega^H_x(\lambda)\sim\textrm{Geom}(p_H(\lambda))$ and $\omega^V_x(\lambda)\sim\textrm{Geom}(p_V(\lambda))$;
    \item (independence) the collection $\{\omega^H_{x+i\e_1},\omega^V_{x+j\e_2}:i,j>0\}$ consists of mutually independent variables; the same holds for $\{\omega^H_{x-i\e_1},\omega^V_{x-j\e_2}:0\le i<x_1-u_1,0\le j<x_2-u_2\}$.
\end{enumerate}
\end{theorem}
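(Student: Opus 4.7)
My strategy is to interpret the construction as the discrete analogue of a stationary single-server queue, column by column, and then invoke the discrete-time Burke theorem. Fix a column index $k\ge 0$. I regard the vertical boundary weights $\big(\omega^V_{u+k\e_1+j\e_2}(\lambda)\big)_{j\in\Z}$ on column $V_{u+k\e_1}$ as inter-arrival times of a queue, and the bulk weights $\big(\omega_{u+(k+1)\e_1+j\e_2}\big)_{j\in\Z}$ on the next column as service times. The recursion \eqref{eq:L_lambda}, specialised to these two columns, is precisely the two-sided Lindley recursion for the departure epochs $L_\lambda\big(u+(k+1)\e_1+j\e_2\big)$, so that $\omega^V_{u+(k+1)\e_1+j\e_2}(\lambda)$ and $\omega^H_{u+(k+1)\e_1+j\e_2}(\lambda)$ are the inter-departure times and the sojourn times, respectively, of successive customers at equilibrium.

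\textbf{Input from queueing theory.} I would then invoke the discrete Burke theorem for the geometric M/M/1 queue: if arrivals are i.i.d.\ $\textrm{Geom}(\alpha)$ and services are i.i.d.\ $\textrm{Geom}(\beta)$ with $\alpha<\beta$, then in equilibrium (a) the inter-departure times are i.i.d.\ $\textrm{Geom}(\alpha)$, (b) the sojourn times are i.i.d.\ $\textrm{Geom}(\gamma)$ for an explicit $\gamma=\gamma(\alpha,\beta)$, and (c) the past departure sequence is independent of the future arrival sequence. A short calculation confirms that $q(\lambda)$ is tuned precisely so that $\gamma\big(p_V(\lambda),p\big)=p_H(\lambda)$; the characteristic algebraic identity $(1-p_V(\lambda))(1-p_H(\lambda))=1-p$ is built into the definition of $q$. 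The stability condition $p_V(\lambda)<p$ is immediate from the formula for $p_V(\lambda)$ and the fact that $q(\lambda)\in(0,p)$.

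\textbf{Column-by-column induction.} Parts (ii) and (iii) would now be proved by induction on the column index $k$. The base case $k=0$ holds by the input construction: the weights $\omega^V_{u+j\e_2}(\lambda)$ are i.i.d.\ $\textrm{Geom}(p_V(\lambda))$ and independent of the bulk. For the inductive step, I apply Burke to the pair of columns $V_{u+k\e_1}$ and $V_{u+(k+1)\e_1}$: the i.i.d.\ $\textrm{Geom}(p_V(\lambda))$ law is propagated to the new column of vertical weights, the produced sojourn variables on the new column are i.i.d.\ $\textrm{Geom}(p_H(\lambda))$, and they are independent of all vertical weights further east and of the bulk weights further east. Iterating over $k$ yields the marginal statements in (ii) and the northeastern independence in the first half of (iii); the southwestern half follows either by the time-reversed version of Burke's theorem or by re-indexing $j\mapsto -j$ and running the induction backwards from column $k$ inwards, using that $x\in u+\Z_{\ge0}^2$ is fixed.

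\textbf{Stationarity and main difficulty.} Once (ii) and (iii) are in place, (i) is essentially immediate: the process $\{T(\lambda;x,x+y):y\in\Z_{\ge0}^2\}$ is a measurable function of the horizontal boundary weights along $H_x$ going east, the vertical boundary weights along $V_x$ going north, and the bulk weights on $x+\Z_{\ge0}^2$; by (ii), (iii) and the i.i.d.\ nature of the bulk, this triple has the same joint law as the analogous one rooted at $u$. The substantive difficulty lies in the inductive step described above, and specifically in tracking the independence across columns. This is the non-trivial content of Burke's theorem, which ultimately rests on time-reversibility of the workload Markov chain at equilibrium; everything else amounts to careful bookkeeping of the dependency graph.
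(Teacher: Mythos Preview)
Your proposal is correct and follows exactly the approach the paper indicates: the paper's own ``proof'' simply cites \cite[Theorem~A.2]{grojanras25} for all three parts, but the paragraph immediately preceding the theorem explicitly spells out the queueing-theory interpretation you use (vertical boundary weights as inter-arrival times, bulk weights as service times, next column's vertical and horizontal boundary weights as inter-departure and sojourn times), and the discrete Burke theorem is precisely the engine behind the cited result. Your column-by-column induction, the identity $(1-p_V(\lambda))(1-p_H(\lambda))=1-p$, and the deduction of (i) from (ii)--(iii) together with independence from the bulk northeast of $x$ are the standard ingredients; the only thing to be careful about in writing it out is to carry the independence from the \emph{bulk} weights in the induction hypothesis (you note this in the inductive step but lean on it implicitly in the argument for (i)).
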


\begin{proof}
We refer the reader to~\cite[Theorem~A.2]{grojanras25} for details; part~(i) is their (a.v), part~(ii) is their (d) and part~(iii) is their (c). Note that our notation corresponds to their notation in that $q(\lambda)=1-\overline{p}(\lambda,1-\lambda)$ with $r=1-p$.
\end{proof}

Additivity of the boundary travel times~\eqref{eq:additivity}, together with the distributional properties of the above theorem, show that
$$
\E[T(\lambda;0,x)]=x_1\E[\omega^H_0(\lambda)]+x_2\E[\omega^V_0(\lambda)]=x_1\frac{1-q(\lambda)}{q(\lambda)}+x_2\frac{1-p}{p-q(\lambda)}.
$$
From~\eqref{eq:domination} we further obtain that
$$
\E[T(0,x)]\le\inf_{\lambda\in(0,1)}\E[T(\lambda;0,x)]=\frac1p\Big[(1-p)(x_1+x_2)+2\sqrt{(1-p)x_1x_2}\Big].
$$
In the limit, one may in fact show that the above gives an equality, and thus gives an explicit expression for the shape function $\psi(x)$, in that
$$
\psi(x):=\lim_{n\to\infty}\frac1n\E[T(0,nx)]=\frac1p\Big[(1-p)(x_1+x_2)+2\sqrt{(1-p)x_1x_2}\Big].
$$

Just as in~\cite{balbussep20,grojanras25}, for the analysis below, we shall be required to consider boundary travel times for pairs of values of the parameter $\lambda$ as a time. For this purpose we shall extend the above construction into a simultaneous construction of boundary weights for two different values of $\lambda$. More precisely, given $0<\lambda<\lambda'<1$, let $(\omega^V_{u+j\e_2}(\lambda))_{j\in\Z}$ be as above. Next, let $(\tilde\omega_j)_{j\in\Z}$ be i.i.d.\ $\textrm{Geom}(p_V(\lambda'))$, and take $(\omega^V_{u+j\e_2}(\lambda'))_{j\in\Z}$ to be the inter-departure times of a single server queue with inter-arrival times given by $(\tilde\omega_j)_{j\in\Z}$ and service times given by $(\omega^V_{u+j\e_2}(\lambda))_{j\in\Z}$. It follows from standard result of queuing theory that $(\omega^V_{u+j\e_2}(\lambda'))_{j\in\Z}$ is i.i.d.\ $\textrm{Geom}(p_V(\lambda'))$, and hence that in the simultaneous construction the boundary travel times, for $\lambda$ and $\lambda'$, individually satisfy the properties of Theorem~\ref{thm:boundary1}. In addition to the above, the joint process satisfies a few additional properties that we summarise in the next theorem.

\begin{theorem}\label{thm:boundary2}
    For every $p\in(0,1)$, $0<\lambda<\lambda'<1$ and $u\in\Z^2$, we have for every $x\in u+\Z_{\ge0}^2$ that
\begin{enumerate}[label=(\roman*)]
    \item (stationarity) $\{(T(\lambda;x,x+y),T(\lambda';x,x+y)):y\in\Z_{\ge0}^2\}\stackrel{d}{=}\{(T(\lambda;u,u+y),T(\lambda';u,u+y)):y\in\Z_{\ge0}^2\}$;
    \item (monotonicity) $\omega_x\le\omega^H_x(\lambda')\le\omega^H_x(\lambda)$ and $\omega_x\leq\omega^V_x(\lambda)\le\omega^V_x(\lambda')$;
    \item (independence) $\{\omega^V_{x+j\e_2}(\lambda):j>0\}$ and $\{\omega^V_{x+j\e_2}(\lambda'):u_2-x_2<j\le0\}$ are independent; the same holds for $\{\omega^H_{x+i\e_1}(\lambda):i>0\}$ and $\{\omega^V_{x+i\e_1}(\lambda'):u_1-x_1<i\le0\}$.
\end{enumerate}
\end{theorem}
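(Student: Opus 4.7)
The plan is to adapt the queueing arguments behind Theorem~\ref{thm:boundary1} to the simultaneous coupling, in which $\omega^V(\lambda')$ is realised as the output process of a single-server queue with inter-arrival times $\tilde\omega_j$ and service times $\omega^V_{u+j\e_2}(\lambda)$. All three parts should follow from classical queueing identities applied to this coupled system together with the already-established single-$\lambda$ properties.

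For (i), I would observe that the pair $(\omega^V_{u+j\e_2}(\lambda), \omega^V_{u+j\e_2}(\lambda'))_{j \in \Z}$ is jointly shift-stationary, as both components are stationary outputs of queues indexed over all of $\Z$ driven by stationary inputs. The joint boundary weights on any column $V_{x}$ with $x_1 \geq u_1$ are then obtained by iterating the queueing map through $x_1 - u_1$ columns of i.i.d.\ bulk weights, and this iteration preserves joint shift-stationarity. Writing $T(\lambda; x, x+y)$ and $T(\lambda'; x, x+y)$ as the same measurable functional of the boundary weights on $V_x \cup H_x$ and the bulk weights in the interior of $R_{x, x+y}$, we deduce the joint stationarity of the travel times as in the proof of Theorem~\ref{thm:boundary1}(i).

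For (ii), the inequalities $\omega_x \leq \omega^H_x(\lambda) \wedge \omega^V_x(\lambda)$ are immediate from~\eqref{eq:domination}. For the inter-parameter comparison on the column $V_u$, I would invoke the Lindley recursion for the coupling queue: each inter-departure time pointwise dominates the corresponding service time, yielding $\omega^V_{u+j\e_2}(\lambda) \leq \omega^V_{u+j\e_2}(\lambda')$ for all $j \in \Z$. Propagation to columns $V_{u+k\e_1}$ with $k \geq 1$ would proceed by induction on $k$, using that the queueing map sending the boundary on one column to that on the next is monotone in its boundary input when the bulk weights are held fixed. The reverse inequality $\omega^H_x(\lambda') \leq \omega^H_x(\lambda)$ follows from the dual Lindley form $s_{j+1} = (s_j - \tau_{j+1})^+ + \sigma_{j+1}$, which shows that sojourn times are monotone decreasing in inter-arrival times, combined with the monotonicity $\omega^V(\lambda) \leq \omega^V(\lambda')$ just established.

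For (iii), I would use a discrete Burke-type theorem applied to the coupled stationary queue, giving that the departure sequence indexed by $j \leq 0$ is independent of the service sequence indexed by $j > 0$. On the column $V_u$ this directly yields the independence of $\{\omega^V_{u+j\e_2}(\lambda'): j \leq 0\}$ and $\{\omega^V_{u+j\e_2}(\lambda): j > 0\}$. For general $x \in u + \Z_{\geq 0}^2$, propagation from $V_u$ to $V_x$ requires an iterated Burke-type argument applied column by column, which is essentially a joint version of the proof of Theorem~\ref{thm:boundary1}(iii); the horizontal statement follows symmetrically. I expect the main obstacle to be this last step: unlike the single-$\lambda$ case, where reversibility of one stationary queue suffices, here the coupling entangles the two systems through the auxiliary arrival process $\tilde\omega$, so one must verify that the relevant reversal argument remains compatible with the coupling at each column.
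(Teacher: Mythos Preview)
The paper does not prove this theorem: it simply refers to \cite[Theorem~A.2]{grojanras25}, identifying (i) with their (a.v), (ii) with their (a.iv.2), and (iii) with their (b). Your proposal therefore goes well beyond what the paper itself supplies, by sketching the queueing-theoretic arguments that underlie the cited result.

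Your outline is broadly on target. The monotonicity in (ii) does indeed follow from the Lindley recursion (inter-departure times dominate service times pointwise), and the independence in (iii) is a Burke-type statement for the coupled stationary queue. The propagation step you identify as the main obstacle---extending the column-$V_u$ independence to general $x$ via an iterated Burke argument compatible with the coupling---is exactly the delicate point, and is handled in the appendix of \cite{grojanras25} through a careful induction over columns together with the reversibility of the discrete $M/M/1$ queue. One small correction: for the $H$-inequality in (ii), the comparison $\omega^H_x(\lambda') \le \omega^H_x(\lambda)$ is not quite a direct statement about sojourn times of the coupling queue (which has arrivals $\tilde\omega$, not $\omega^V(\lambda')$); rather, it follows by observing that the map $(\omega^V_{u+\cdot\e_2}) \mapsto \omega^H_x$ defined through $L_\lambda$ is antitone in the vertical boundary input, so $\omega^V(\lambda) \le \omega^V(\lambda')$ on $V_u$ forces $\omega^H(\lambda') \le \omega^H(\lambda)$ everywhere. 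With that adjustment, your plan matches the content of the referenced proof.
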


\begin{proof}
We refer the reader to~\cite[Theorem~A.2]{grojanras25} for details; part~(i) is their (a.v), part~(ii) is their (a.iv.2) and part~(iii) is their (b).
\end{proof}

\begin{remark}\label{rem:construction}
    For future reference, we also point out that it is immediate from construction that the boundary weights $\omega^H_x(\lambda)$ and $\omega^V_x(\lambda)$ are measurable with respect to the weights $\{\omega^V_{u+j\e_2}:j\le x_2-u_2\}$ and $\{\omega_y:y\le x\}$. Similarly, the coupled weights $\omega^H_x(\lambda)$, $\omega^V_x(\lambda)$ and $\omega^H_x(\lambda')$, $\omega^V_x(\lambda')$ are measurable with respect to the weights $\{\omega^V_{u+j\e_2}:j\le x_2-u_2\}$, $\{\tilde\omega_j:j\le x_2-u_2\}$ and $\{\omega_y:y\le x\}$.
\end{remark}

We proceed with a brief explanation of how to extend the above construction of boundary passage times on $u+\Z_{\ge0}^2$ to the full plane $\Z^2$. We reiterate that this extension is not essential for what is to come, but will allow us to talk about semi-infinite geodesics, and relate them to the boundary weights.

To extend the construction, we note that the configuration $\{(\omega_x,\omega^H_x,\omega^V_x):x\in u+\Z_{\ge0}^2\}$ induces, for every $u\in\Z^2$, a measure on $(\R\times\R\times\R)^{\Z^2}$, equipped with the product topology and Borel sigma algebra. As noted in~\cite{grojanras25}, taking $u=(-n,-n)$, and sending $n\to\infty$, results in a limiting measure $\P'$, due to the stationarity in Theorem~\ref{thm:boundary1}, which restricted to $u+\Z_{\ge0}^2$ is equal in distribution to the construction above, and in particular enjoys the properties stipulated in Theorem~\ref{thm:boundary1}. In a similar manner we may extend the simultaneous construction for two values $\lambda<\lambda'$.

Although the above extension is not essential, we shall henceforth assume it is in force, and for that reason make no mentioning of a base vertex $u$ with respect to which the boundary travel times are defined. Moreover, whenever we are considering boundary travel times for two different parameter values, we shall assume the coupling construction to be in force. We will continue to use the notation above.

\subsection{Geodesics for boundary travel times}

Recall that $\pi(u,v)$ denotes the set of vertices belonging to some geodesic for $T(u,v)$. Similarly, we let $\pi(\lambda;u,v)$ denote the set of vertices belonging to some geodesic for $T(\lambda;u,v)$, i.e.\
$$
   \pi(\lambda;u, v) := \{x \in \Z^2 : \exists \text{ a $\lambda$-geodesic } \gamma \text{ from } u \text{ to } v \text{ such that } x \in \gamma\},
$$
where we by $\lambda$-geodesic refer to a path $\gamma$ satisfying $T(\lambda; \gamma) = T(\lambda; u, v)$, where $T(\lambda; \gamma)$ is as in (\ref{eq:alternative T_lambda}).

In order to be able to talk about an `upmost' and `downmost' geodesic between two points, we need a notion of order between paths. Given two (finite, directed) paths $\gamma$ and $\gamma'$, we say that $\gamma$ is \textbf{above} $\gamma'$ if and only if for every vertical line $V$ intersecting both $\gamma$ and $\gamma'$, the minimum of $\gamma \cap V$ is larger or equal to the minimum of $\gamma' \cap V$ (these minima exist because $V$ is a fully ordered set and paths are finite); see Figure~\ref{fig:abovedef}.
\begin{figure}[htbp]
\begin{center}
\includegraphics[width = 0.5 \linewidth]{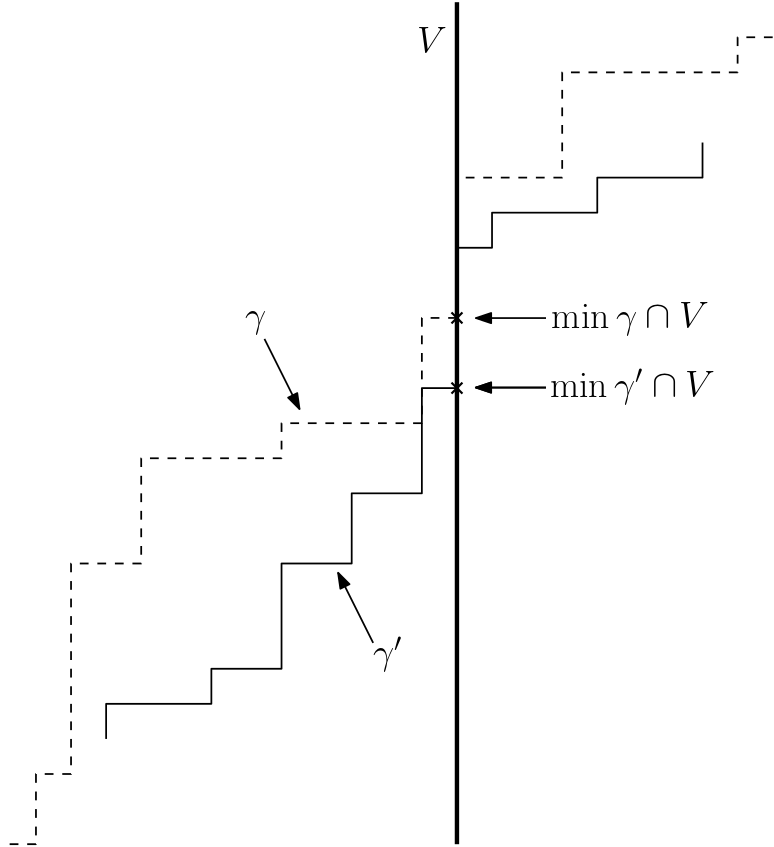}
\caption{The directed path $\gamma$ is above $\gamma'$.} \label{fig:abovedef}
\end{center}
\end{figure}
Equivalently, for every horizontal line $H$ intersecting both paths, the maximum of $\gamma \cap H$ is lesser or equal to the maximum of $\gamma' \cap H$.

For $u \leq v \in \Z^2$, we now define the \textbf{upmost} geodesic from $u$ to $v$ to be the geodesic from $u$ to $v$ that lies above all other, and denote this by $\pi_+(u, v)$ and $\pi_+(\lambda;u,v)$, depending on whether we refer to geodesics with respect to $T(u,v)$ or $T(\lambda;u,v)$. We define the \textbf{downmost} geodesic between $u$ and $v$ analogously, and use the notation $\pi_-(u, v)$ and $\pi_-(\lambda;u,v)$. 

From~\eqref{eq:domination} we note that boundary weights dominate bulk weights, and it may therefore be favourable for the geodesic associated to $T(\lambda;u,v)$ to remain longer at the lower-left boundary of $R_{u,v}$ than the geodesic associated to $T(u,v)$. This will provide us with a comparison between geodesics for travel times with or without boundary. Let $E_{\rightarrow}(\lambda; u, v)$ be the event that the first step of $\pi_-(\lambda; u, v)$ is to the right and $E_{\uparrow}(\lambda; u, v)$ that the first step of $\pi_+(\lambda; u, v)$ is up, so
\begin{align*}
E_{\rightarrow}(\lambda; u, v) &:= \{u+\e_1 \in \pi_-(\lambda;u, v) \} , \\
E_{\uparrow}(\lambda; u, v) &:= \{u+ \e_2 \in \pi_+(\lambda;u, v)\}.
\end{align*}
We then have the following lemma establishing monotonicity in the positions of geodesics with respect to $\lambda$-geodesics provided $E_{\rightarrow}(\lambda; u, v)$ or $E_{\uparrow}(\lambda; u, v)$ is satisfied.

\begin{lemma} \label{monoofgeod}
For all $u \le v \in \Z^2$ and $\lambda \in (0, 1)$, on the event $E_{\rightarrow}(\lambda; u, v)$ the path $\pi_-(u, v)$ is above $\pi_-(\lambda; u, v)$, and on $E_{\uparrow}(\lambda; u, v)$ the path $\pi_+(\lambda; u, v)$ is above $\pi_+(u, v)$.
\end{lemma}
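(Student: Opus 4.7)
My plan is a swap-and-contradict argument; I describe it for the first statement, the second following symmetrically. Write $\alpha := \pi_-(u,v)$ and $\beta := \pi_-(\lambda; u, v)$ and suppose, for contradiction, that on the event $E_\rightarrow(\lambda; u, v)$ the path $\alpha$ is not above $\beta$. Ordering the common vertices of $\alpha$ and $\beta$ along either path (the orderings coincide, as both are monotone), between consecutive common vertices $a$ and $b$ the subpaths $\alpha_{a,b}$ and $\beta_{a,b}$ (the restrictions of $\alpha,\beta$ to the segment from $a$ to $b$) share no interior vertex and, being monotone lattice paths with shared endpoints, must be linearly ordered. The assumption that $\alpha$ is not above $\beta$ lets me pick such $a, b$ with $\alpha_{a,b}$ strictly below $\beta_{a,b}$.

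The first key step is to locate $a$ using $E_\rightarrow$. Since $\beta$ starts rightward, $\beta \cap V_u = \{u\}$, so $a \notin V_u \setminus \{u\}$. Moreover, $a = u$ would force $\alpha$ to step right and $\beta$ to step up from $u$ (so that $\alpha_{a,b}$ lies below $\beta_{a,b}$), again contradicting $E_\rightarrow$. Hence $a \in R_{u+\e_+, v}$ or $a \in H_u \setminus \{u\}$; in particular $a_1 > u_1$, so neither $\alpha_{a,b}$ nor $\beta_{a,b}$ meets $V_u$. A similar analysis shows $b \in R_{u+\e_+, v}$.

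I would then define $\tilde\alpha$ and $\tilde\beta$ by swapping the middle subpaths between $a$ and $b$. Since $\beta_{a,b}$ leaves $a$ with an up step and $b$ lies in the interior, $\beta_{a,b} \cap H_u \subseteq \{a\}$; as $\alpha_{a,b}$ lies below $\beta_{a,b}$, the set $\alpha_{a,b} \cap H_u$ consists of $a$ together with a (possibly empty) stretch $\{a + \e_1, \ldots, a + k\e_1\}$ before $\alpha$ leaves the boundary. Reading the weights off the definition of $T(\lambda; \cdot)$ and invoking the dominance $\omega^H_x(\lambda) \ge \omega_x$ from~\eqref{eq:domination}, a direct computation gives
\[
\bigl[T(\lambda; \alpha_{a,b}) - T(\lambda; \beta_{a,b})\bigr] - \bigl[T(\alpha_{a,b}) - T(\beta_{a,b})\bigr] = \sum_{i=1}^{k} \bigl(\omega^H_{a + i\e_1}(\lambda) - \omega_{a+i\e_1}\bigr) \ge 0.
\]
Optimality of $\alpha$ for $T$ (against the competitor $\tilde\alpha$) gives $T(\alpha_{a,b}) \ge T(\beta_{a,b})$, and optimality of $\beta$ for $T(\lambda; \cdot)$ (against $\tilde\beta$) gives $T(\lambda; \alpha_{a,b}) \le T(\lambda; \beta_{a,b})$. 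Substituting these into the displayed identity forces both bracketed differences to vanish; in particular $T(\lambda; \tilde\beta) = T(\lambda; \beta)$, so $\tilde\beta$ is itself a $\lambda$-geodesic, lying strictly below $\beta$ on $(a,b)$. This contradicts the minimality of $\beta = \pi_-(\lambda; u, v)$.

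The second statement is proved by the analogous argument, using $\omega^V$-weights and the vertical boundary $V_u$ in place of $\omega^H$-weights and $H_u$: from a putative crossing segment rooted in the interior or on $V_u \setminus \{u\}$, the swap produces a $\lambda$-geodesic strictly above $\pi_+(\lambda; u, v)$, contradicting its maximality. The main subtlety is the case analysis in the second paragraph: it is precisely the hypothesis $E_\rightarrow$ (respectively $E_\uparrow$) that rules out configurations in which $\omega^V$- and $\omega^H$-boundary weights could appear on opposite sides of the swap, which would spoil the sign control in the displayed inequality and obstruct the argument.
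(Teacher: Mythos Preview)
Your swap-and-contradict argument is correct, and it is a genuinely different route from the paper's proof. The paper argues in two separate steps: first it compares exit times, showing that on $E_\rightarrow(\lambda;u,v)$ the downmost $\lambda$-geodesic stays on $H_u$ at least as long as $\pi_-(u,v)$ does (using the same inequality $\omega_x\le\omega^H_x(\lambda)$ that you use); second, it shows that once the two downmost geodesics meet in the bulk $R_{u+\e_+,v}$ they coalesce, since both restrict to $\pi_-(x,v)$ from any common bulk point $x$. These two facts together force $\pi_-(u,v)$ to stay weakly above $\pi_-(\lambda;u,v)$ throughout.

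Your approach bypasses this decomposition: you locate a single offending excursion $[a,b]$, use $E_\rightarrow$ to pin $a$ away from $V_u$ and from $u$ itself (so that only $\omega^H$-weights can appear in the boundary difference and the sign is controlled), and then the swap simultaneously exploits optimality of $\alpha$ for $T$ and of $\beta$ for $T(\lambda;\cdot)$ to manufacture a $\lambda$-geodesic strictly below $\pi_-(\lambda;u,v)$. This is cleaner and more self-contained; the paper's version, on the other hand, isolates the bulk coalescence property, which is of some independent interest. Two small presentational points: the justification that $b\in R_{u+\e_+,v}$ is not really ``similar'' to the analysis of $a$ (it doesn't use $E_\rightarrow$; it follows because $\alpha$ must enter $b$ from $b-\e_2$), and your description of $\alpha_{a,b}\cap H_u$ as ``$a$ together with a stretch'' tacitly assumes $a\in H_u$; when $a\in R_{u+\e_+,v}$ the intersection is empty and $k=0$, which is harmless but worth saying. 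Also note that $T(\lambda;\alpha_{a,b})$ should be read as the contribution of that subpath to the $\lambda$-weight of a $u$-to-$v$ path (boundary on $H_u\cup V_u$), not via the formula~\eqref{eq:alternative T_lambda} with base point $a$.
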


\begin{proof}
The two statements of the proposition are equivalent by symmetry, so we only prove the first one. Let $u \leq v \in \Z^2$ and let $\lambda \in (0, 1)$.
Let $k$ be the largest integer such that $u+ k \e_1 \in \pi_-(u,v)$ and let $k_{\lambda}$ be the largest integer such that $u + k_{\lambda} \e_1 \in \pi_-(\lambda; u, v)$. (These random integers $k$ and $k_{\lambda}$ are in this context often referred to as `exit times'.) We prove two things: on $E_{\rightarrow}(\lambda; u, v)$ we have $k \leq k_{\lambda}$, and if the geodesics $\pi_-(u,v)$ and $\pi_-(\lambda;u,v)$ intersect in the bulk, then they coincide after their first point of intersection.

We first prove $k \leq k_{\lambda}$ on $E_{\rightarrow}(\lambda; u, v)$. It is clear if $u_2 = v_2$, because the geodesics are then both the horizontal line from $u$ to $v$. It is also clear if $k = 0$, so we can assume that $u_2 < v_2$ and $k > 0$. For $j \geq 0$, let $\pi_j$ be the downmost geodesic from $u+j\e_1+\e_2$ to $v$. We then have that $\pi_-(u,v)$ is the concatenation of $(u, u+\e_1, \dots, u+k \e_1)$ and $\pi_k$, and $k$ is the largest maximizer of $\{\sum_{i = 1}^j \omg_{u+i \e_1} + T(\pi_j): j \geq 1\}$. Thus, for any $j \in \{1, \dots, k\}$,
\[
\sum_{i=1}^j \omg_{u+i\e_1} + T(\pi_j) \leq \sum_{i=1}^k \omg_{u+i\e_1} + T(\pi_k)  .
\]
By~\eqref{eq:domination} we have $\omega_{u+i\e_1}\le\omega^H_{u+i\e_1}$ for all $i$. Applying this equality for $j+1\le i \le k$ in the above, we deduce for all $1 \leq j \leq k$
\begin{equation} \label{jlessk}
\sum_{i=1}^j \omg_{u+i\e_1}^H + T(\pi_j) \leq \sum_{i=1}^k \omg_{u+i\e_1}^H + T(\pi_k) .
\end{equation}
Moreover, on the event $E_{\rightarrow}(\lambda; u, v)$, the $\lambda$-geodesic $\pi_-(\lambda;u,v)$ does not pick up weight on the left boundary, so that $\pi_-(\lambda;u,v)$ is the concatenation of $(u, u+ \e_1, \dots, u+ k_{\lambda} \e_1)$ and $\pi_{k_{\lambda}}$, and $k_{\lambda}$ is the largest maximizer of $\{\sum_{i = 1}^j \omg_{u+i \e_1}^H +T(\pi_j): j > 0 \}$. Therefore, as $k > k_{\lambda}$ would contradict~\eqref{jlessk}, we deduce $k \leq k_{\lambda}$ on $E_{\rightarrow}(\lambda; u, v)$.

It remains to prove that if $\pi_-(u,v)$ and $\pi_-(\lambda;u,v)$ intersect outside of the boundary, they coincide from that point onwards. Assume $\pi_-(u,v)$ visits $x \in R_{u+ \e_+, v}$. By maximality of $\pi_-(u,v)$ and the fact that it is downmost, we have $\pi_-(u,v) \cap R_{x, v} = \pi_-(x, v)$. Similarly, if $\pi_-(\lambda;u,v)$ visits $x \in R_{u+ \e_+, v}$, since the restriction of $\pi_-(\lambda;u,v)$ to $R_{x, v}$ is maximal for $T(x,v)$, as well as downmost, we have $\pi_-(\lambda;u,v) \cap R_{x, v} = \pi_-(x, v)$. Therefore, if $\pi_-(u,v)$ and $\pi_-(\lambda;u,v)$ visit the same $x \in R_{u + \e_+, v}$, then they coincide after $x$.

This concludes the proof as $k \leq k_{\lambda}$ means that $\pi_-(u,v)$ is above $\pi_-(\lambda;u,v)$ until these geodesics leave the boundary, and the coalescence property implies that $\pi_-(u,v)$ cannot visit a point strictly below $\pi_-(\lambda;u,v)$, inside the rectangle $R_{u + \e_+, v}$.
\end{proof}

The next lemma establishes that $\lambda$-geodesics ending in the same point are consistent with each other in the sense that changing the starting point does not change the geodesic in the region where both geodesics are defined. This is a rather straightforward consequence of the additivity of the boundary travel times.

\begin{lemma} \label{ctyofgeod}
Consider $u, u', v$ in $\Z^2$ such that $u, u' < v$, and let $\lambda \in (0, 1)$. Then $\pi(\lambda; u, v)$ and $\pi(\lambda; u', v)$ coincide in $R_{u+\e_+, v} \cap R_{u'+\e_+, v}$.
\end{lemma}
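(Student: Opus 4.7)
The plan is to characterize membership in $\pi(\lambda; u, v)$ by a condition that only involves $y$, $v$, and $\lambda$, thereby making the starting point $u$ invisible. The key input will be the additivity property~\eqref{eq:additivity} of the boundary travel times, combined with careful bookkeeping of which weights (boundary vs.\ bulk) are picked up along a decomposed path.

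First I would fix $y \in R_{u+\e_+, v}$ and consider any directed path $\gamma$ from $u$ to $v$ through $y$, split as $\gamma = \gamma_{u,y} \cup \gamma_{y,v}$. Since $y > u$, the sub-path $\gamma_{y,v}$ is entirely contained in $R_{u+\e_+, v}$ and thus, in view of~\eqref{eq:alternative T_lambda}, contributes only bulk weights to $T(\lambda;\gamma)$ relative to base $u$. Collecting contributions and correcting for the double-counted weight at $y$ gives
\[
T(\lambda; \gamma) = T(\lambda; \gamma_{u,y}) + T(\gamma_{y,v}) - \omega_y.
\]
Maximising over directed paths from $u$ to $v$ passing through $y$,
\[
\max\{T(\lambda; \gamma) : y \in \gamma\} = T(\lambda; u, y) + T(y, v) - \omega_y,
\]
and this equals $T(\lambda;u,v)$ precisely when $y \in \pi(\lambda; u, v)$.

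Next I would invoke additivity, $T(\lambda;u,v) = T(\lambda;u,y) + T(\lambda;y,v)$, to cancel the $T(\lambda;u,y)$ term and obtain the clean characterization
\[
y \in \pi(\lambda; u, v) \iff T(\lambda; y, v) = T(y, v) - \omega_y \qquad \text{for } y \in R_{u+\e_+, v}.
\]
The right-hand side does not involve $u$. Running the same argument with $u'$ in place of $u$ yields the identical condition for $y \in \pi(\lambda; u', v)$, provided $y \in R_{u'+\e_+, v}$. For $y \in R_{u+\e_+, v} \cap R_{u'+\e_+, v}$ both characterizations apply, and because they coincide we conclude $y \in \pi(\lambda; u, v) \iff y \in \pi(\lambda; u', v)$, which is the claim.

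I do not expect a real obstacle here; the argument is essentially a bookkeeping exercise. The one point requiring care is the path decomposition in the first step, where one must correctly track the boundary weights along $H_u \cup V_u$, the bulk weights along $R_{u+\e_+, y} \cap \gamma_{u,y}$ and along $\gamma_{y,v}$, and subtract off the duplicated $\omega_y$. Once this is set up cleanly, the identity $T(\lambda;y,v) = T(y,v) - \omega_y$ emerges as an intrinsic condition on $y$ (relative to $v$ and $\lambda$), which is exactly what makes the starting point drop out.
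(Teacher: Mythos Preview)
Your argument is correct, and it is in fact cleaner than the route the paper takes. Both hinge on the additivity~\eqref{eq:additivity}, but they are organised differently. The paper fixes the bottom-left corner $y$ of $R_{u,v}\cap R_{u',v}$, uses additivity to show that $T(\lambda;u,x)-T(\lambda;u',x)$ is constant over $x$ on the lower-left boundary of $R_{y,v}$, and then argues that the exit point of a $\lambda$-geodesic from $u$ on this boundary is also an exit point for some $\lambda$-geodesic from $u'$, finally splicing the two geodesics together after that exit point. Your approach bypasses the exit-point analysis entirely: you derive the intrinsic characterisation
\[
y\in\pi(\lambda;u,v)\ \Longleftrightarrow\ T(\lambda;y,v)=T(y,v)-\omega_y\qquad(y\in R_{u+\e_+,v}),
\]
whose right-hand side is visibly independent of $u$, so the conclusion is immediate. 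What you gain is brevity and a pointwise statement that is reusable; what the paper's argument makes more explicit is how to actually transform a geodesic from $u$ into one from $u'$.
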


\begin{proof}
Fix $u, u'<v$ and $\lambda\in(0,1)$. Let $y$ denote the bottom-left corner of the rectangle $R_{u, v} \cap R_{u', v}$.
Let $x\in R_{y,v}$. Using additivity of the boundary travel times~\eqref{eq:additivity} we find that
$$
T(\lambda;u,x)=T(\lambda;u,y)+T(\lambda;y,x)
$$
and that
$$
T(\lambda;u',x)=T(\lambda;u',y)+T(\lambda;y,x).
$$
Combining the two equations leads to the identity, for all $x\in R_{y,v}$,
\begin{equation}\label{eq:diff}
T(\lambda; u, x) - T(\lambda; u, y) = T(\lambda; u', x) - T(\lambda; u', y).
\end{equation}

For $x\neq y$ on the bottom-left boundary of $R_{y, v}$, let $\tilde{x}$ be the neighbor of $x$ that is inside $R_{y + \e_+, v}$. Now, let $\gamma$ be a $\lambda$-geodesic from $u$ to $v$. This geodesic visits the bottom-left boundary of $R_{y, v}$, and leaves it at a point $x_0$, so that $T(\lambda; \gamma) = T(\lambda; u, x_0) + T(\tilde x_0, v)$. Moreover, since $\gamma$ is a geodesic,
\[
x_0 \in \argmax \{T(\lambda; u, x) + T(\tilde{x}, v): x \in R_{y, v} \backslash R_{y + \e_+, v},x\neq y\}  .
\]
By~\eqref{eq:diff}, note that $T(\lambda; u, x)$ equals $T(\lambda; u, y) - T( \lambda; u', y) + T(\lambda; u', x)$ for $x$ in $R_{y, v} \backslash R_{y + \e_+, v}$, and since the difference $T(\lambda; u, y) - T( \lambda; u', y)$ does not depend on $x$, we have
\begin{align*}
x_0 \in & \, \argmax \{T(\lambda; u, y) - T( \lambda; u', y) + T(\lambda; u', x) + T(\tilde{x}, v): x \in R_{y, v} \backslash R_{y + \e_+, v},x\neq y\} \\
&= \argmax \{T(\lambda; u', x) + T(\tilde{x}, v): x \in R_{y, v} \backslash R_{y + \e_+, v}, x\neq y\}.
\end{align*}
We deduce that there exists a $\lambda$-geodesic $\gamma'$ from $u'$ to $v$ going through $x_0$ and $\tilde x_0$, and by replacing the segment of $\gamma'$ from $\tilde x_0$ to $v$ by the corresponding segment of $\gamma$  if necessary (which does not change the fact that it is a geodesic, since both paths must be geodesics from $\tilde{x}_0$ to $v$), we can assume that $\gamma$ and $\gamma'$ coincide in $R_{y + \e_+, v}$. This concludes the proof as any element of $\pi(\lambda; u, v) \cap R_{y + \e_+, v}$ thus belongs to $\pi(\lambda; u', v) \cap R_{y + \e_+, v}$, and vice versa.
\end{proof}

The backwards compatibility of geodesics, established in the previous lemma, allows us to define the \textbf{infinite backwards} $\lambda$-geodesic $\pi(\lambda; -\infty, v)$ as the union
\[
\pi(\lambda; - \infty, v) := \bigcup_{\underset{u < v}{u \in \Z^2:}} \big(\pi(\lambda; u, v) \cap R_{u+ \e_+, v}\big).
\]
The above union is increasing due to Lemma \ref{ctyofgeod}. We also let $\pi_+(\lambda; - \infty, v)$ and $\pi_-(\lambda; -\infty, v)$ denote the upmost and downmost infinite backwards $\lambda$-geodesics. This definition of infinite backwards geodesics is convenient to describe how $\lambda$-geodesics behave geometrically. The next theorem connects the $\lambda$-geodesics to the direction $\mathbf{u}_{\lambda} = \lambda \e_1 + (1 - \lambda) \e_2$. Moreover, it gives a precise quantitative bound on how unlikely it is that a $\lambda$-geodesic deviates significantly from that direction; see also Figure~\ref{fig:backgeodlargedev}. Given $x = (x_1, x_2) \in \R^2$ we let $\lfloor x \rfloor$ denote $(\lfloor x_1 \rfloor , \lfloor x_2 \rfloor)$, the couple of floors of the coordinates of $x$.
\begin{figure}[htbp]
\begin{center}
\includegraphics[width = 0.6 \linewidth]{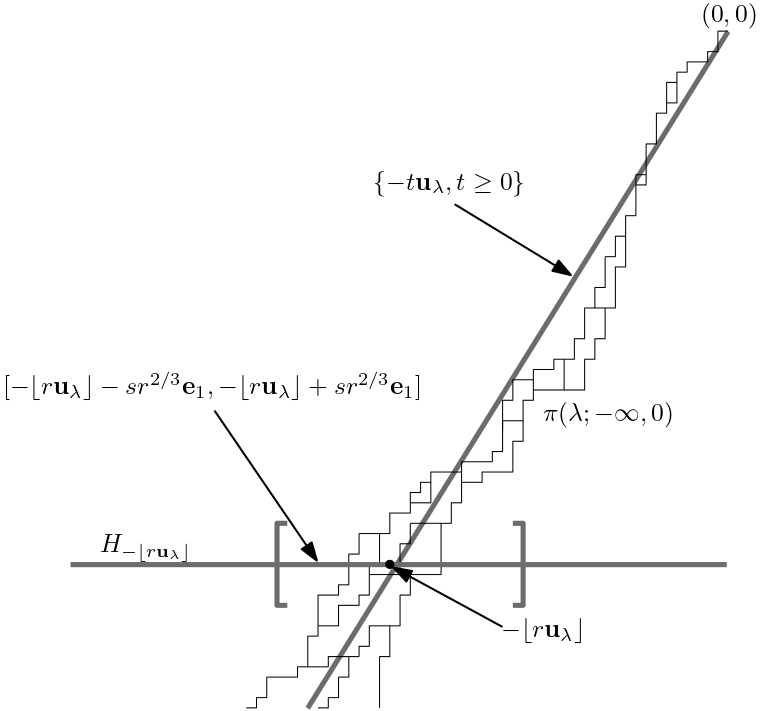}
\caption{Illustration of Theorem \ref{backgeodlargedev}: with probability at least $1 - e^{-c s^3}$, the intersections of $\pi(\lambda; - \infty, 0)$ with $H_{- \lfloor r \u_{\lambda} \rfloor}$ are at distance less than $s r^{2/3}$ from $-r \mathbf{u}_{\lambda}$.}
\label{fig:backgeodlargedev}
\end{center}
\end{figure}

\begin{theorem} \label{backgeodlargedev}
For every $\delta \in (0, \frac{1}{2})$, there exist constants $c, C, r_0 >0$ such that for all $\lambda \in (\delta, 1-\delta)$, $r \geq r_0$ and all $s \in [0, r^{1/3}]$
\[
\P \Big(\pi(\lambda; -\infty, 0)\cap H_{- \lfloor r \u_{\lambda} \rfloor}   \subseteq \big[ - \lfloor r \u_{\lambda} \rfloor - s r^{2/3} \e_1 , - \lfloor r \u_{\lambda} \rfloor + s r^{2/3} \e_1 \big]  \Big)
\geq 1 - C \exp(-c s^3).
\]
\end{theorem}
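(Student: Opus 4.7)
The plan is to derive Theorem~\ref{backgeodlargedev} as a translation of the exit-point large deviation estimate for stationary LPP proved in~\cite{grojanras25}. That estimate controls, for the $\lambda$-stationary LPP from a base vertex to a target in the characteristic direction $\u_\lambda$, the location at which the geodesic leaves the two boundary rays attached to the base: its displacement from the corresponding characteristic boundary point has cubic-exponential tails on the scale of the $2/3$ power of the distance between base and target, with constants uniform for $\lambda$ bounded away from $\{0,1\}$.

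First I would reduce to a finite rectangle. Fix $\lambda \in (\delta, 1-\delta)$, $r \geq r_0$, $s \in [0, r^{1/3}]$ and set $v_r := -\lfloor r \u_\lambda \rfloor$. By Lemma~\ref{ctyofgeod} and the definition of the infinite backwards $\lambda$-geodesic, for every base vertex $u \leq v_r - M \e_+$ with $M$ sufficiently large, $\pi(\lambda; u, 0)$ coincides with $\pi(\lambda; -\infty, 0)$ inside the horizontal slab containing $H_{v_r}$. Hence the random set $\pi(\lambda; -\infty, 0) \cap H_{v_r}$ equals $\pi(\lambda; u, 0) \cap H_{v_r}$ almost surely, and it suffices to control the horizontal coordinate of the latter for a single suitable $u$.

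Next I would identify a large horizontal deviation of the $\lambda$-geodesic from $u$ to $0$ with a large exit off the boundary. Take $u$ proportional to $-(r+M)\u_\lambda$ (suitably rounded) for some fixed multiple $M$ of $r$, so that $v_r$ lies well inside the rectangle $R_{u, 0}$ and $0$ is in the characteristic direction from $u$ at scale $r + M$. The $\lambda$-geodesic from $u$ to $0$ leaves the rays $u + \N\e_1$ and $u + \N\e_2$ at an exit point $Z_\lambda$, and the portion of the geodesic beyond $Z_\lambda$ is a bulk geodesic in a rectangle where the classical transversal-fluctuation estimates of~\cite{joh00,baideimclmilzho01,grojanras25} apply. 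A horizontal displacement of the crossing $\pi(\lambda; u, 0) \cap H_{v_r}$ by more than $s r^{2/3}$ from $v_r$ forces either (a) the exit point $Z_\lambda$ to be displaced from its characteristic location by at least $\tfrac{1}{2} s (r+M)^{2/3}$, or (b) the bulk geodesic from $Z_\lambda$ to $0$ to have transversal fluctuations exceeding $\tfrac{1}{2} s r^{2/3}$. Both events are controlled by cubic-exponential bounds of the form $Ce^{-c s^3}$ available in~\cite{grojanras25}; a union bound and an appropriate choice of $M$ comparable to $r$ deliver the required inequality.

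The main technical obstacle is matching conventions and ensuring uniformity of the constants as $\lambda$ ranges over $(\delta, 1-\delta)$, since both the boundary parameter $q(\lambda)$ and its derivative~\eqref{eqn: derivative of q(lambda)} degenerate at the endpoints; this is precisely why the statement restricts $\lambda$ to a compact subinterval. Secondary nuisances are the floor in $v_r = -\lfloor r \u_\lambda \rfloor$, the possibility that $\pi(\lambda; -\infty, 0) \cap H_{v_r}$ is a nontrivial horizontal segment of vertices rather than a single point, and the need to take $M \to \infty$ consistently along a sequence of base vertices. All three are harmless for the stated estimate and can be absorbed by enlarging $r_0$ and $C$.
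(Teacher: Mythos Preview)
Your overall idea of reducing to the exit–point estimate in \cite{grojanras25} is correct, and the use of Lemma~\ref{ctyofgeod} to pass from the backwards–infinite geodesic to a finite $\lambda$-geodesic is exactly how the paper begins. However, the core of your argument diverges from the paper's and carries a real gap.

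The paper does \emph{not} invoke any bulk transversal–fluctuation estimate. Instead it applies the exit bound~\eqref{eq:exit} from \emph{two} base points, chosen so that each application directly controls one side of the interval: (i) from the base $-\lfloor r\u_\lambda\rfloor$, which sits \emph{on} the line $H_{-\lfloor r\u_\lambda\rfloor}$, the event $\{Z_H<sr^{2/3}\}$ literally says the $\lambda$-geodesic leaves that horizontal line before reaching $-\lfloor r\u_\lambda\rfloor+sr^{2/3}\e_1$; and (ii) from a base $z$ placed on the \emph{vertical} line through $-\lfloor r\u_\lambda\rfloor-sr^{2/3}\e_1$ at the characteristic height, the event $\{Z_V<(\delta/2)sr^{2/3}\}$ forces the geodesic to leave that vertical line below $H_{-\lfloor r\u_\lambda\rfloor}$, hence to enter the horizontal line to the right of $-\lfloor r\u_\lambda\rfloor-sr^{2/3}\e_1$. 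The only input is the exit bound, used twice.

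Your scheme instead places a single base $u\approx -(r+M)\u_\lambda$ and splits into (a) a large exit and (b) a large transversal fluctuation of the bulk geodesic from the random exit point $Z_\lambda$ to $0$. Step~(b) is the problem. First, within the paper's logical structure the bulk transversal bound (Theorem~\ref{boundoutsidebulk}) is \emph{derived from} Theorem~\ref{backgeodlargedev}, so invoking it here is circular; and for geometric weights, cubic–exponential transversal tails are not in \cite{joh00,baideimclmilzho01} (those establish the exponent $2/3$, not tail bounds at this precision). Second, even granting such a bound, the starting point of the bulk geodesic is random: you would need either a union bound over $O(s(r+M)^{2/3})$ possible exit locations (introducing a polynomial prefactor you have not addressed) or an ordering argument reducing to two fixed starting points, neither of which you supply. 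The dichotomy itself also needs a line of geometry you only assert: the line from a boundary point at displacement $k$ to $0$ crosses $H_{v_r}$ at a displacement proportional to $k$, and the constants have to be matched.

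In short, the paper's two–base trick is both simpler and self-contained. If you want to salvage your route, you must either source an independent cubic–exponential transversal bound for geometric LPP (not derived from the present theorem) and handle the random starting point rigorously, or adopt the paper's device of anchoring one base on $H_{-\lfloor r\u_\lambda\rfloor}$ and another on the displaced vertical line.
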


\begin{proof}
The theorem is a slight reformulation of Theorem~B.1 in~\cite{grojanras25}. Given $\lambda\in(0,1)$ and $u<v$, let
\begin{align*}
Z_H(\lambda;u,v)&:=\max\{j\ge0:u+j\e_1\in\pi_-(\lambda;u,v)\},\\
Z_V(\lambda;u,v)&:=\max\{j\ge0:u+j\e_2\in\pi_+(\lambda;u,v)\}.
\end{align*}
Theorem~B.1 in~\cite{grojanras25} states that for every $\delta\in(0,1-p)$ there exist constants $c_0$, $r_0$ and $s_0$ such that for all $\lambda\in(\delta,1-\delta)$ and $v\in\Z_{\ge0}^2$ satisfying $|v|_1\ge r_0$ and $|\lambda-v_1/|v|_1|\le |v|^{-1/3}$ we have for $s\ge s_0$ that
\begin{equation}\label{eq:exit}
\P \big(\{Z_H(\lambda;0,v) \geq s |v|_1^{2/3} \} \cup \{ Z_V(\lambda;0,v) \geq s |v|_1^{2/3} \} \big) \leq\exp(-c_0 s^3).
\end{equation}
Key to proving this bound comes from being able to compute the moment generating function of the stationary travel times; see~\cite{grojanras25}. We here show how the announced theorem follows from~\eqref{eq:exit}.

It suffices to verify the statement for $\delta\in(0,1/2)$ small enough, so we may assume $\delta<1-p$. Let $c_0=c_0(\delta)$, $r_0=r_0(\delta)$ and $s_0=s_0(\delta)$ be as stated above. Let $y=-\lfloor r\u_\lambda\rfloor-\lfloor sr^{2/3}\rfloor\e_1$ and let $z$ denote the point in $V_y$ closest to the line $\{t\u_\lambda:t\in\R\}$. We first claim that on the event that $Z_H(\lambda;-\lfloor r\u_\lambda\rfloor,0)<sr^{2/3}$ and $Z_V(\lambda;z,0)<(\delta/2) sr^{2/3}$, we have that
\begin{equation}\label{eq:containment}
\pi(\lambda; -\infty, 0) \cap H_{- \lfloor r \u_{\lambda} \rfloor} \subseteq \big[- \lfloor r \u_{\lambda} \rfloor - s r^{2/3} \e_1, - \lfloor r \u_{\lambda} \rfloor+s r^{2/3} \e_1\big].
\end{equation}

To see this, note that the definition of $\pi(\lambda;-\infty,0)$ and Lemma~\ref{ctyofgeod}, if $Z_H(\lambda;-\lfloor r\u_\lambda\rfloor,0)<sr^{2/3}$ then any $\lambda$-geodesic leaves the line $H_{-\lfloor r\u_\lambda\rfloor}$ to the left of $- \lfloor r \u_{\lambda} \rfloor + s r^{2/3} \e_1$. Similarly, since the point $z$ lies at least at distance $(\delta/2) sr^{2/3}$ below the line $H_{-\lfloor r\u_\lambda\rfloor}$, if $Z_V(\lambda;z,0)<(\delta/2) sr^{2/3}$ then it follows that any $\lambda$-geodesic leaves the vertical line $V_y$ before it reaches the horizontal line $H_{-\lfloor r\u_\lambda\rfloor}$, which it therefore must enter to the right of $- \lfloor r \u_{\lambda} \rfloor - s r^{2/3} \e_1$. In conclusion,~\eqref{eq:containment} holds.

We now apply~\eqref{eq:exit} twice, first for $v=\lfloor r\u_\lambda\rfloor$ and then for $v=-z$. We note that in both cases we have $|v|_1\ge r_0$ and $|\lambda-v_1/|v|_1|\le1/r<|v|^{1/3}$, as long as $r\ge r_0$ is not too small. We conclude, from~\eqref{eq:exit}, that for $s,s'\ge s_0$ we have
$$
\P\big(Z_H(\lambda;-\lfloor r\u_\lambda\rfloor,0)\ge sr^{2/3}\big)\le\exp(-c_0s^3)\quad\text{and}\quad\P\big(Z_V(\lambda;z,0)\ge(\delta/2) sr^{2/3}\big)\le\exp(-c_0(s')^3),
$$
where $s'$ is defined such that $s'|z|_1^{2/3}=(\delta/2)sr^{2/3}$. Since $|z|_1\approx r+\frac1\lambda sr^{2/3}$ we obtain for $sr^{2/3}\le r$ that
$$
s'\approx\frac{\delta/2}{(1+\frac1\lambda sr^{-1/3})^{2/3}}s\ge \left(\frac\delta2\right)^{5/3} s \geq \frac{\delta^2}4 s.
$$
Hence, taking $c=c_0\cdot(\delta^2/4)^3$, we find that~\eqref{eq:containment} holds with probability at least $1-2\exp(-cs^3)$, for $s\ge (4/\delta^2) s_0$. Finally, we may remove the restriction on $s$ by setting $C=\max\{2,\exp(c (4 s_0 /\delta^2)^3 )\}$.
\end{proof}

\subsection{Boundary weights and Busemann functions}

As an aside, which will not be used further in this paper, we end this section by interpreting the boundary weights introduced above in terms of so-called Busemann functions. We refer the reader to the survey~\cite{rassoul18} for further references to the literature.

Given a semi-infinite geodesic $\pi=(v_0,v_1,\ldots)$, directed `backwards' in that $v_0\ge v_1\ge...$, we define the Busemann function $B_\pi:\Z^2\times\Z^2\to\R$ of $\pi$ as the limit
$$
B_\pi(x,y):=\lim_{k\to\infty}[T(v_k,x)-T(v_k,y)].
$$
(The limit exists almost surely for all $x$ and $y$ in $\Z^2$.)

It follows from a well-known compactness argument that for every $\lambda\in(0,1)$ and every $x\in\Z^2$ there exists an almost surely unique upmost backwards-infinite geodesic $\gamma_\lambda(x)$ with asymptotic direction $-\u_\lambda$. Moreover, for every $x$ and $y$, the geodesics $\gamma_\lambda(x)$ and $\gamma_\lambda(y)$ coalesce almost surely. Indeed, Theorem~\ref{backgeodlargedev} says that $\pi_+(\lambda;-\infty,x)$ is the upmost geodesic in direction $-\u_\lambda$.

By definitions of boundary weights~\eqref{eq:boundary} and the additivity of boundary travel times~\eqref{eq:additivity}, we have that
$$
\omega^V_x(\lambda)=T(\lambda;x,x-\e_2)=T(\lambda;u,x)-T(\lambda;u,x-\e_2).
$$
Since geodesics coalesce, we may for every $x$ find a point $z\in\pi_+(\lambda;-\infty,x)\cap\pi_+(\lambda;-\infty,x-\e_2)$. Due to Lemma~\ref{ctyofgeod}, the finite geodesics from $u<z$ coincide with the infinite geodesics on $R_{u+\e_+,x}$, so that $z\in\pi_+(\lambda;u,x)\cap\pi_+(\lambda;u,x-\e_2)$. In particular,
\begin{align*}
    T(\lambda;u,x)&=T(\lambda;u,z)+T(z,x),\\
    T(\lambda;u,x-\e_2)&=T(\lambda;u,z)+T(z,x-\e_2),
\end{align*}
and hence
$$
\omega_x^V(\lambda)=T(z,x)-T(z,x-\e_2)=B_{\pi(\lambda;-\infty,0)}(x,x-\e_2).
$$
A similar argument shows that $\omega^H_x(\lambda)=B_{\pi(\lambda;-\infty,0)}(x,x-\e_1)$.

In conclusion, it is possible to construct the boundary weights from Busemann functions, given the existence of a unique geodesic in direction $-\u_\lambda$. Indeed, this gives a simultaneous construction of the boundary weights for all $\lambda\in(0,1)$. However, this construction does not identify the distributional properties of Theorems~\ref{thm:boundary1} and~\ref{thm:boundary2}, for which the queuing theory seems to be essential.

\section{Bounds on the probability of being on the geodesic} \label{se:probageod}

In this section, we prove upper-bounds on the probability that a vertex $v \in R_{0, n \e_+}$ belongs to the set $\pi(0, n \e_+)$ of vertices belonging to some geodesic between the origin and $n\e_+$. We will prove two bounds, that together amount to Theorem~\ref{th:probageod}. The first bound is a large deviation estimate on transversal fluctuations, which we derive from Theorem~\ref{backgeodlargedev}. Since the transversal fluctuation exponent is $2/3$, this bound will be nontrivial only for vertices at least this distance from the main diagonal.

The second bound provides a precise bound on the probability of being on the geodesic for vertices within distance $n^{2/3}$ of the main diagonal. We may expect that points within distance $n^{2/3}$ from the diagonal are roughly equally likely to be visited by the geodesic, resulting in a heuristic upper bound of order $n^{-2/3}$. The second bound verifies this with a logarithmic correction.

The bound for vertices `far' from the main diagonal we derive from Theorem~\ref{backgeodlargedev}, which in turn was deduced from precise bounds on so-called exit times in stationary last-passage percolation obtained in~\cite{grojanras25}. The bound for vertices `close' to the main diagonal will follow an approach from~\cite{balbussep20}, also employed in~\cite{grojanras25}, which reduced the event of being on the geodesic to an estimate of a random walk being non-negative for order $n^{2/3}$ steps.

In both cases a notable difference from the above mentioned works is our bounds hold for all vertices in the square $R_{0,n\e_+}$, and not only for vertices at a macroscopic distance from the boundary.

\subsection{Vertices `far' from the diagonal}

Given a directed path $\gamma$ and a vertex $v$, we say that $\gamma$ is {\bf above} $v$ if every point in $\gamma \cap V_{v}$ is above or equal to $v$. Similarly, $\gamma$ is said to be {\bf below} $v$ if every point in $\gamma\cap V_v$ is below or equal to $v$.

\begin{theorem} \label{boundoutsidebulk}
There exists $0<c, C <\infty$ such that for all $n \in \N$ and all $v\in R_{0, n \e_+}$ such that $|v|_1 \leq n$ and $v_1 \geq v_2$ we have
\[
\P \big( \pi(0, n \e_+) \text{ is above } v\big) \geq 1 - C \exp \Big(-c \frac{(v_1 - v_2)^3}{|v|_1^2} \Big).
\]
\end{theorem}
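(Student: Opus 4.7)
My plan is to prove this by combining the monotonicity of $\lambda$-geodesics (Lemma~\ref{monoofgeod}), the coincidence between finite and infinite backward $\lambda$-geodesics inside a rectangle (Lemma~\ref{ctyofgeod}), and the transversal large deviation bound for infinite backward $\lambda$-geodesics (Theorem~\ref{backgeodlargedev}). The event ``$\pi(0,n\e_+)$ is above $v$'' is equivalent to ``$\pi_-(0,n\e_+)$ is above $v$'', since the downmost geodesic is the lowest one. By Lemma~\ref{monoofgeod}, on the event $E_\rightarrow(\lambda;0,n\e_+)$ the downmost bulk geodesic lies above the downmost $\lambda$-geodesic, so it suffices to show, for a carefully chosen $\lambda$, that both $\pi_-(\lambda;0,n\e_+)$ lies above $v$ and $E_\rightarrow(\lambda;0,n\e_+)$ holds with the desired probability.

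Set $a=n-v_1$ and $b=n-v_2$, so $0\le a\le b$ and $a+b\ge n\ge|v|_1$. The parameter $\lambda^\star:=a/(a+b)$ is the one for which the ray $n\e_+-t\u_{\lambda^\star}$ passes exactly through $v$. I would pick $\lambda=\lambda^\star+\eta$ with $\eta$ of order $(v_1-v_2)/(a+b)$, so that the ray $n\e_+-t\u_\lambda$ crosses the horizontal line $H_v$ at an $x$-coordinate smaller than $v_1$ by a margin of order $v_1-v_2$. Applying the translation-invariant form of Theorem~\ref{backgeodlargedev} to $\pi(\lambda;-\infty,n\e_+)$ at $H_v$, the characteristic parameter there is $r=b/(1-\lambda)\asymp a+b$, and the transversal fluctuation at scale $s$ is $sr^{2/3}\asymp s(a+b)^{2/3}$. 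Setting $s\asymp(v_1-v_2)/|v|_1^{2/3}$, and using $a+b\ge|v|_1$ to bound $s(a+b)^{2/3}$ against the margin, one obtains
\[
\P\bigl(\pi(\lambda;-\infty,n\e_+)\cap H_v\subseteq(-\infty,v_1]\times\{v_2\}\bigr)\ge 1-C\exp\bigl(-c(v_1-v_2)^3/|v|_1^2\bigr).
\]
On this event, any forward directed path tracking the infinite backward geodesic hits $V_v$ at a point with $y$-coordinate at least $v_2$, i.e.\ it is above $v$. Lemma~\ref{ctyofgeod} then transfers the statement to $\pi_-(\lambda;0,n\e_+)$, which agrees with the infinite backward geodesic inside $R_{\e_+,n\e_+}$.

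It remains to control $\P(E_\rightarrow(\lambda;0,n\e_+)^c)$. For our choice of $\lambda\in(0,1/2)$, the horizontal boundary weights are heavy and the $\lambda$-geodesic typically begins with a right step; the complementary event corresponds to the exit time along the vertical boundary being positive, which can be bounded by the same large-deviation estimates underlying Theorem~\ref{backgeodlargedev} (namely the exit-time bounds of~\cite{grojanras25}), again at the scale $s\asymp(v_1-v_2)/|v|_1^{2/3}$.

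The main obstacle I foresee is the treatment of vertices near the boundary of the square, especially when $v_1$ is close to $n$ (so $a$ is small and $\lambda^\star$ is close to $0$) or when $v_2=0$, because Theorem~\ref{backgeodlargedev} requires $\lambda$ bounded away from $0$ and $1$. In these regimes the target bound $\exp(-c(v_1-v_2)^3/|v|_1^2)$ degrades to something as strong as $\exp(-c|v|_1)$, which suggests a separate and simpler direct argument: the probability that the geodesic passes through a vertex on or near the horizontal axis requires the path to accumulate bulk weight along nearly a straight line, which is exponentially unlikely in $v_1$ by standard large-deviation estimates. I would handle the bulk regime via the $\lambda$-geodesic argument above, clamp $\lambda$ to $[\delta_0,1-\delta_0]$ for a fixed $\delta_0$ when the geometric choice would leave this range, and absorb the remaining boundary vertices with this direct estimate.
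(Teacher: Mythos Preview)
Your overall strategy---compare $\pi_-(0,n\e_+)$ to a $\lambda$-geodesic via Lemma~\ref{monoofgeod}, pass to $\pi(\lambda;-\infty,\cdot)$ via Lemma~\ref{ctyofgeod}, and control the latter by Theorem~\ref{backgeodlargedev}---is exactly the paper's. But there is a real scaling gap, and it is not the one you identify.

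First, your boundary worry is misplaced. With $\eta=c_0(b-a)/(a+b)$ for any fixed $c_0\in(0,\tfrac12)$ one has $\lambda=(a+c_0(b-a))/(a+b)\in[c_0,\tfrac12]$ for all admissible $v$, so $\lambda$ never approaches $0$; and when $v_2=0$ the event ``above $v$'' is automatic. No clamping or separate argument is needed for that.

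The actual gap is in your sentence ``using $a+b\ge|v|_1$ to bound $sr^{2/3}$ against the margin''. Anchoring the backward geodesic at $n\e_+$, the distance from the anchor to $H_v$ along $-\u_\lambda$ is $r=b/(1-\lambda)\asymp a+b\ge n$, so the transversal window in Theorem~\ref{backgeodlargedev} is $sr^{2/3}\asymp s(a+b)^{2/3}$. Your margin at $H_v$ is only $\asymp v_1-v_2$. To have the inclusion event imply ``above $v$'' you need $sr^{2/3}\le c(v_1-v_2)$, which forces $s\lesssim(v_1-v_2)/(a+b)^{2/3}$ and hence exponent $s^3\lesssim(v_1-v_2)^3/(a+b)^2\le(v_1-v_2)^3/n^2$. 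This matches the target $(v_1-v_2)^3/|v|_1^2$ only when $|v|_1\asymp n$; for $|v|_1\ll n$ it is too weak. The inequality $a+b\ge|v|_1$ goes the wrong way here---you would need $a+b\lesssim|v|_1$. Enlarging $\eta$ to get a bigger margin pushes $\lambda$ past $\tfrac12$, which then breaks the control of $E_\rightarrow(\lambda;0,n\e_+)$ at the same $s$.

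The paper's fix is to anchor at the \emph{near} corner. It reflects across the line $\{y=-x\}$, so that $\pi(0,n\e_+)$ becomes $\pi(-n\e_+,0)$ and $v$ becomes $\tilde v=(-v_2,-v_1)$; now $|\tilde v|_1=|v|_1\le n$ is the distance from the anchor $0$, and the direction is chosen as $\lambda=\tfrac12-(v_1-v_2)/(4|v|_1)\in[\tfrac14,\tfrac12]$ (pointing through the midpoint $\tilde v'$ between $\tilde v$ and the diagonal). The first application of Theorem~\ref{backgeodlargedev} at level $H_{\tilde v'}$ has $r=|v|_1$, so $s=(v_1-v_2)/|v|_1^{2/3}$ gives precisely the claimed exponent. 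The event $E_\rightarrow(\lambda;-n\e_+,0)$ is handled by a second application at level $H_{-n\e_+}$: the ray through $\tilde v'$ meets that line at a point $\tilde v''$ whose distance from the corner is $\asymp n(v_1-v_2)/|v|_1$, and one checks this exceeds $(s/8)|\tilde v''|_1^{2/3}$ with the \emph{same} $s$. No boundary cases arise.
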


Note that, by symmetry with respect to the main diagonal, this translates to a similar bound for $v$ such that $v_1 \leq v_2$, but with `below' instead of `above'. Moreover, by symmetry with respect to the transverse diagonal, the theorem translates into a bound for $v\in R_{0,n\e_+}$ such that $|v|_1 \geq n$, with $|v|_1$ replaced with $2n - |v|_1$. 

The idea behind the proof of Theorem~\ref{boundoutsidebulk} is as follows: we compare the geodesics between the corners of the square to the backwards-infinite geodesic issued from $n \e_+$ in direction $- \u_{\lambda}$, for carefully chosen $\lambda$, and rely on Theorem~\ref{backgeodlargedev}. For a vertex $v$ `far' from the diagonal we choose $\lambda$ so that the $\lambda$-geodesic is likely to intersect $V_v$ above $v$ and to intersect the horizontal axis to the right of the origin. On this event we rely on Lemmas~\ref{monoofgeod} and~\ref{ctyofgeod} to control $\pi(0,n\e_+)$ in terms of the $\lambda$-geodesic.


\begin{proof}[Proof of Theorem \ref{boundoutsidebulk}]
In order to facilitate the comparison with infinite backwards geodesics, we consider instead the square $R_{- n \e_+, 0}$ for $n \in \N$. By symmetry with respect to the line $\{(x,y):y=-x\}$, and the fact that this symmetry preserves being above or below for directed paths, for all $n \in \N$ and $v=(v_1,v_2) \in R_{0, n \e_+}$ we have
\[
\P \big( \pi(0, n \e_+) \text{ is above } v \big) = \P \big(\pi(-n \e_+, 0) \text{ is above } -(v_2,v_1) \big)  .
\]
As a consequence, we may reformulate the statement we set out to prove as there exists $c, C > 0$ such that for all $n \in \N$ and $v \in R_{- n \e_+, 0}$ with $|v|_1 \leq n$ and $0\ge v_1 \geq v_2$,
\[
\P \big(\pi(- n \e_+, 0) \text{ is above }v \big) \geq 1 - C \exp \left( -c \frac{|v_2 - v_1|^3}{|v|_1^2} \right)  .
\]
So, let $n \in \N$ and $v\in R_{-n \e_+, 0}$ be such that $|v|_1 \leq n$ and $0\ge v_1 \geq v_2$, so that $v$ is in the `right quadrant' of $R_{-n\e_+,0}$ corresponding to the shaded region in Figure~\ref{fig:geodabovev} (left). We may further assume that $v_1-v_2\ge 8$, as the statement is otherwise trivial assuming that $C$ is sufficiently large and $c$ sufficiently small.

We let $v'$ denote the point at half-distance to the main diagonal from $v$, we take $\lambda$ such that $v' = - |v'|_1 \u_{\lambda}$ and $v''$ such that $v''_2 = -n$ and $v'' = - |v''|_1 \u_{\lambda}$; see Figure~\ref{fig:geodabovev}. We can then express $v'$, $\lambda$ and $v''$ explicitly as
\begin{align*}
v' &= \frac{3 v_1 + v_2}{4} \e_1 + \frac{v_1 + 3 v_2}{4}\e_2  ,\\
\lambda &= \frac{3v_1 + v_2}{4 v_1 + 4v_2} = \frac{1}{2} - \frac{v_1 - v_2}{4 |v|_1}  ,\\
v'' &= -\frac{3v_1 + v_2}{v_1 + 3v_2} n \e_1 - n \e_2  .
\end{align*}

\begin{figure}
    \begin{minipage}{.48 \textwidth}
    \centering
	        \includegraphics[width = 0.9\textwidth]{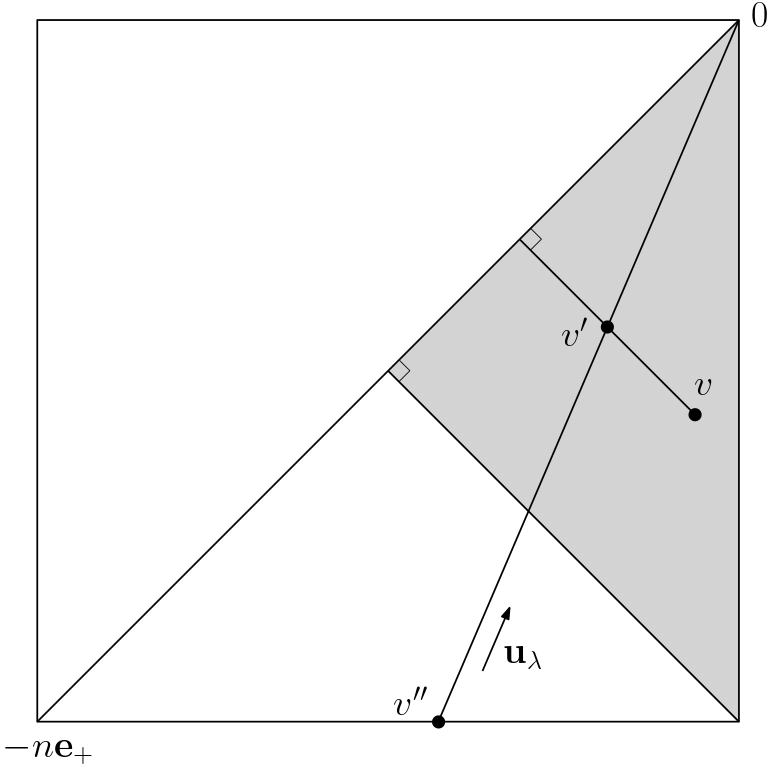}
    \end{minipage}
    \begin{minipage}{.48 \textwidth}
    \centering
	        \includegraphics[width = 0.9\textwidth]{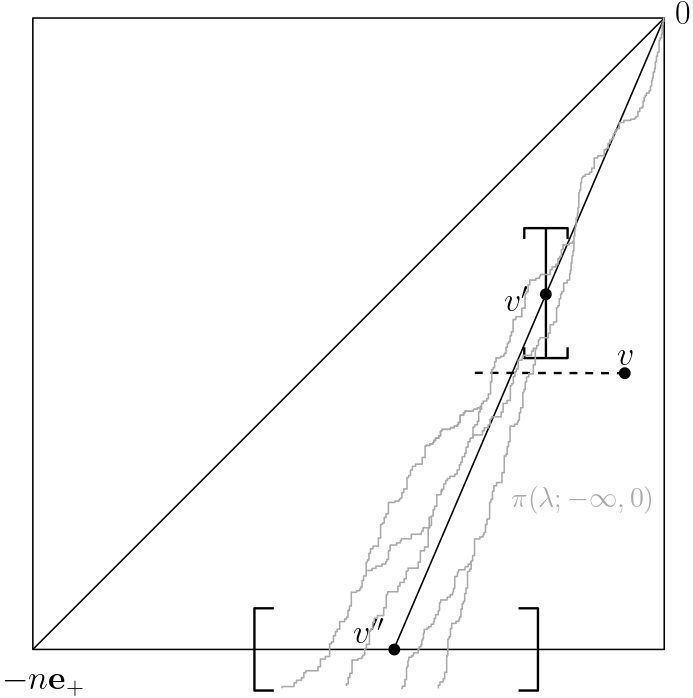}
    \end{minipage}
    \caption{Illustration of $v'$ and $v''$ and the shaded right quadrant of $R_{-n \e_+, 0}$ (left). Illustration of the intervals around $v'$ and $v''$ which $\pi(\lambda; - \infty, 0)$ goes through with high probability (right).}
    \label{fig:geodabovev}
\end{figure}

Since $v$ lies below the main diagonal, and $v'$ half-way between $v$ and the diagonal, we have $\lambda \in [\frac{1}{4}, \frac{1}{2}]$. We now apply Theorem~\ref{backgeodlargedev} twice, with $\delta=1/5$, and obtain $c, C, r_0 > 0$ such that for $|v|_1\ge r_0$ and all $0\le s\le 8|v|_1^{1/3}$ we have, with $c'=c/8^3$, that
\begin{equation} \label{vertdev}
\P \Big( \pi( \lambda; - \infty, 0)\cap V_{\lfloor v' \rfloor} \subseteq \Big[ \lfloor v' \rfloor - \frac{s}{8} |v'|_1^{2/3} \e_2,  \lfloor v' \rfloor + \frac{s}{8} |v'|_1 ^{2/3} \e_2\Big] \Big) \geq 1 - C e^{-c's^3}  ,
\end{equation}
and
\begin{equation} \label{horizdev}
\P \Big( \pi(\lambda; - \infty, 0)\cap H_{- n \e_+}  \subseteq \Big[ \lfloor v'' \rfloor - \frac{s}{8} |v''|_1^{2/3} \e_1, \lfloor v'' \rfloor + \frac{s}{8} |v''|_1^{2/3} \e_1 \Big] \Big) \geq 1 - C e^{-c' s^3}  .
\end{equation}

We now specify $s$ above to be
\[
s = \frac{v_1 - v_2}{|v|_1^{2/3}},
\]
and note that $0\le s\le |v|_1^{1/3}$. Since $v_1-v_2\ge10$, this choice of $s$ gives
\begin{equation}\label{eq:vertdev2}
v'_2 - \frac{s}{8} |v'|_1^{2/3} = \frac{v_1 + 3v_2}{4} - \frac{v_1 - v_2}{8 |v|_1^{2/3}} |v|_1^{2/3} = v_2 + \frac{v_1 - v_2}{8} \ge v_2+1 ,
\end{equation}
and hence shows that the lower endpoint of the interval in~\eqref{vertdev} lies strictly above $H_v$, as depicted in Figure~\ref{fig:geodabovev} (right). We further claim, with the above choice of $s$, that
\begin{equation}\label{eq:horizdev2}
v''_1 - \frac{s}{8} |v''|_1^{2/3} \ge -n+4,
\end{equation}
so that the left endpoint of the interval in~\eqref{horizdev} lies strictly to the right of $V_{-n\e_+}$, again as depicted in Figure~\ref{fig:geodabovev} (right). We postpone the verification of~\eqref{eq:horizdev2} to the end of the proof.

We henceforth assume that $s$ is specified as above, so that~\eqref{eq:vertdev2} and~\eqref{eq:horizdev2} hold. On the event in~\eqref{vertdev} we have that $\pi_-(\lambda; - \infty, 0)$ is above $\lfloor v' \rfloor - \frac{s}{8} |v'|_1^{2/3} \e_2$. By~\eqref{eq:vertdev2} and the fact that $\pi_-(\lambda; \infty, 0)$ is a directed path, this implies $\pi_-(\lambda; - \infty, 0)$ is also above $v$. It follows, from~\eqref{vertdev}, that
\begin{equation}\label{eq:vertdev3}
\P \big(\pi_-(\lambda; - \infty, 0) \text{ is above } v \big) \geq 1 - C e^{-c' s^3}  .
\end{equation}
Similarly, on the event in~\eqref{horizdev} we have by~\eqref{eq:horizdev2} that every point in the intersection of $\pi_-(\lambda; - \infty, 0)$ with $H_{-n\e_+}$ lies to the right of $-n\e_++4\e_1$. By Lemma~\ref{ctyofgeod}, the geodesics $\pi_-(\lambda;-\infty,0)$ and $\pi_-(\lambda;-n\e_+,0)$ coincide inside $R_{- (n-1) \e_+, 0}$, so $\pi_-(\lambda; - \infty, 0)$ lying to the right of $-n\e_+4\e_1$ implies that $E_{\rightarrow}(\lambda; - n \e_+, 0)$ occurs.
It follows, from~\eqref{horizdev}, that
\begin{equation}\label{eq:horizdev3}
\P \big( E_{\rightarrow}(\lambda; - n \e_+, 0) \big) \geq 1 - C e^{-c' s^3}  .
\end{equation}
Finally, by Lemma \ref{monoofgeod}, on the event $E_{\rightarrow}(\lambda; - n \e_+, 0)$, $\pi_-(- n \e_+, 0)$ is above $\pi_-(\lambda; - n \e_+, 0)$, so that
\[
\{\pi_-(\lambda; - \infty, 0) \text{ is above } v\} \cap E_{\rightarrow}(\lambda; - n \e_+, 0) \subseteq \{\pi(- n \e_+, 0) \text{ is above } v\}  ,
\]
and we conclude from~\eqref{eq:vertdev3} and~\eqref{eq:horizdev3} that
\[
\P \big( \pi(- n \e_+, 0) \text{ is above } v \big) \geq 1 - 2C e^{-c' s^3}  .
\]
The above bound holds only as long as $|v|_1 \geq r_0$, but it suffices to replace the factor $2C$ by $C' := \max(2 C, e^{c' r_0} )$ for the bound to apply to all $v \in R_{- n \e_+, 0}$ such that $|v|_1\le n$ and $0\ge v_1\ge v_2$.

It only remains to verify~\eqref{eq:horizdev2}. For this, first note that
\[
|v''|_1 = \Big(1 + \frac{3v_1 + v_2}{v_1 + 3v_2} \Big)n =  \frac{4 |v|_1}{|v_1 + 3v_2|}n  .
\]
Since $8 \geq 2 \cdot 4^{2/3} \cdot 3^{1/3}$,
\[
v''_1 - \frac{s}{8} |v''|_1^{2/3}  \geq v''_1 - \frac{s}{2 \cdot s 4^{2/3} \cdot 3^{1/3}} |v''|_1^{2/3} = \frac{3v_1 + v_2}{|v_1 + 3v_2|}n- \frac{v_1 - v_2}{2 \cdot 3^{1/3}|v_1 + 3v_2|^{2/3}} n^{2/3}  .
\]
Factorize by $n$, then use $3 n \geq |v|_1 - 2v_2 \geq |v_1 + 3 v_2|$ to obtain
\[
v''_1 - \frac{s}{8} |v''|_1^{2/3} \geq \bigg( \frac{3v_1 + v_2}{|v_1 + 3v_2|} - \frac{v_1 - v_2}{2 \times 3^{1/3} n^{1/3}|v_1 + 3v_2|^{2/3}} \bigg)n \geq \Big(3v_1 + v_2 - \frac{1}{2}(v_1 - v_2) \Big) \frac{n}{|v_1 + 3v_2|}.
\]
Next, $(3 - 1/2)v_1 + (1 + 1/2)v_2 = (v_1 + 3v_2)/2 +2 v_1$. Moreover, the assumption $v_1-v_2\ge8$ implies that $0\ge 4v_1 \ge v_1 +3v_2+24$. This leads to the further lower bound
\[
\bigg(-\frac{1}{2} + \frac{2v_1}{|v_1 + 3v_2|}\bigg)n \geq \bigg(-\frac{1}{2} + \frac{v_1+3v_2+24}{2|v_1+3v_2|}\bigg)n \geq -n+12\frac{n}{|v_1+3v_2|} \ge -n+4,
\]
using the previously established bound $|v_1+3v_2|\le3n$. This completes the proof.
\end{proof}


\subsection{Vertices `close' to the diagonal}

We proceed in this section with an argument that provides a better bound on the probability of being on the geodesic for vertices close to the main diagonal.

Let $v\in R_{0,n\e_+}$ be fixed. We start by shifting the plane so that $v$ is mapped to the origin. By translation invariance, we have that
\begin{equation}\label{eq:vbound_shift}
\P\big(v\in \pi(0, n\e_+)\big) = \P\big(0\in\pi(-v, n\e_+-v)\big).
\end{equation}
We next note that if the geodesic between $-v$ and $n\e_+-v$ visits the origin, then it also visits either $\e_1$ or $\e_2$. That is, we have
\begin{equation}\label{eq:vbound_split}
\P\big(0\in\pi(-v, n\e_+-v)\big) \leq \P\big(0, \e_1\in\pi(-v, n\e_+-v)\big)+\P\big(0, \e_2\in\pi(-v, n\e_+-v)\big).
\end{equation}
Since the two terms in the above right-hand side can be handled analogously, or by referring to symmetry with respect to the main diagonal, we shall henceforth only consider the former of the two. Indeed, we shall prove the following bound on the probability of visiting the origin and $\e_1$.

\begin{theorem} \label{thm: reformulated theorem}
There exist constants $0<c,C<\infty$ such that the following holds. For all $n\ge1$, $s\ge1$ and $v\in R_{0,n\e_+}$ such that $|v|_1\leq n$, $\min\{v_1,v_2\}\ge C$ and $|v_2-v_1|\le s |v|_1^{2/3}$ we have
$$
\P\big(0, \e_1\in\pi(-v, n\e_+-v)\big) \leq C\Big(s^2 |v|_1^{-2/3} +\exp(-cs^3)\Big).
$$
\end{theorem}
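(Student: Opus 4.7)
The plan is to follow the probabilistic framework developed by Balázs-Busani-Seppäläinen~\cite{balbussep20} and Groathouse-Janjigian-Rassoul-Agha~\cite{grojanras25}, reducing the probability that a geodesic uses a specific edge to a random walk estimate via stationary last-passage percolation. Set $\lambda := v_1/|v|_1$ so that the characteristic direction $\u_\lambda$ is parallel to $v$; the hypotheses $\min(v_1,v_2)\ge C$ and $|v|_1\le n$ ensure that $\lambda \in [\delta, 1-\delta]$ for some $\delta>0$ once $C$ is taken large enough, and the proximity of $v$ to the main diagonal forces $|\lambda-1/2|\lesssim s/|v|_1^{1/3}$. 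Install the stationary boundary at $-v$ with parameter $\lambda$, producing coupled boundary weights $\omega_x^H(\lambda), \omega_x^V(\lambda)$ that dominate the bulk weights by~\eqref{eq:domination}, and $\lambda$-geodesics $\pi_\pm(\lambda; -v, n\e_+-v)$.

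The first step is to reduce to a favorable event $\mathcal{F}$ on which the $\lambda$-geodesic $\pi(\lambda; -v, n\e_+-v)$ remains within an $O(s|v|_1^{2/3})$-tube of the line $-v+\R\u_\lambda$ at least until its crossing of the anti-diagonal through $0$, and on which both $E_{\rightarrow}(\lambda; -v, n\e_+-v)$ and $E_{\uparrow}(\lambda; -v, n\e_+-v)$ hold. By applying Theorem~\ref{backgeodlargedev} to the backward infinite $\lambda$-geodesic from the origin, and using Lemma~\ref{ctyofgeod} to transfer from the infinite to the finite geodesic, I obtain $\P(\mathcal{F}^c)\le Ce^{-cs^3}$, accounting for the second term in the claimed bound. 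On $\mathcal{F}$, Lemma~\ref{monoofgeod} sandwiches the unboundaried geodesic $\pi(-v, n\e_+-v)$ between the upmost and downmost $\lambda$-geodesics, confining it to the same tube.

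Next, on $\mathcal{F}$, the event $\{0, \e_1\in\pi(-v, n\e_+-v)\}$ can be translated, via additivity~\eqref{eq:additivity} and Busemann-type identifications, into an $\argmax$ condition at two consecutive transverse anti-diagonals (through $0$ and through $\e_1$). The centered differences $T(-v, x)-T(-v, 0)$ for $x$ on these anti-diagonals are, with negligible error on $\mathcal{F}$, partial sums of i.i.d.\ geometric boundary weights $\omega^H(\lambda), \omega^V(\lambda)$ (Theorem~\ref{thm:boundary1}), so the $\argmax$ conditions reduce to two essentially independent random walks of length $O(s|v|_1^{2/3})$ attaining their maxima at prescribed locations. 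The independence of the walks along disjoint rays is guaranteed by Theorem~\ref{thm:boundary1}(iii), and combining a local central limit theorem with Sparre-Andersen-type fluctuation estimates for these walks is what is expected to yield the $s^2 |v|_1^{-2/3}$ factor.

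The main obstacle is the rigorous execution of the random walk reduction: identifying and controlling the Busemann-function approximation errors in expressing $T(-v, x)-T(-v, 0)$ as a sum of i.i.d.\ boundary weights, and showing these errors are absorbed on $\mathcal{F}$. A further subtlety is handling the double event at $0$ and $\e_1$ without double-counting, which likely requires a joint comparison at two nearby parameters $\lambda<\lambda'$, exploiting the coupling from Theorem~\ref{thm:boundary2} to produce two genuinely independent walks rather than just one. Finally, vertices close to the corners of $R_{0,n\e_+}$ require care to ensure the walks have enough steps for the CLT estimates to be meaningful; this is precisely the role of the assumption $\min(v_1,v_2)\ge C$.
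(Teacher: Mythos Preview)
Your proposal has the right overall framework (stationary LPP coupling, reduction to a random walk estimate), but the execution plan misidentifies the key mechanism in several places. First, the event $\{0,\e_1\in\pi(-v,w)\}$ is a \emph{single} argmax condition, namely that the maximum of $i\mapsto T(-v,i\e_2)+T(i\e_2+\e_1,w)$ is attained at $i=0$; equivalently $D_i\ge0$ for all $i$, where $D_i$ is the deficit relative to $i=0$. There is no need for two anti-diagonals. The two independent random walks in the paper come from splitting this into $i>0$ and $i<0$, not from two separate crossings. Second, the increments $\Delta_j=T(-v,j\e_2)-T(-v,(j-1)\e_2)$ are \emph{not} approximately i.i.d.\ boundary weights at a single characteristic $\lambda$; the paper instead \emph{bounds} them from both sides by boundary weights at two perturbed parameters $\lambda^\pm=\tfrac12\pm 8s|v|_1^{-1/3}$ (Lemma~\ref{lem: bound the difference of times}), and this two-parameter comparison is central rather than a subtlety. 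It is needed because $D_i$ for $i>0$ requires a lower bound on $\Delta_j$, while $D_i$ for $i<0$ requires an upper bound; the independence of the resulting walks $(Y_j)_{j\ge1}$ and $(Z_j)_{j\le0}$ then comes precisely from Theorem~\ref{thm:boundary2}(iii).

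Third, you only install a boundary at $-v$, but the increments $\Delta'_j=T(\e_1+(j-1)\e_2,w)-T(\e_1+j\e_2,w)$ involve travel times \emph{to} $w=n\e_+-v$ and require a separate stationary construction from that endpoint (the rotated weights $\hat\omega^V$ in Lemma~\ref{lem: another bound on the difference of times}); the independence of $\Delta_j$ and $\Delta'_j$ is then automatic since they depend on disjoint half-planes. Finally, the random walk input is not a local CLT or Sparre--Andersen estimate, but the elementary bound $\P(S_1,\ldots,S_N\ge0)\le C(N^{-1/2}+\mu)$ for a walk with small positive drift $\mu\asymp s|v|_1^{-1/3}$ (Proposition~\ref{prop: random walk}); squaring this over the two independent walks of length $k\asymp s|v|_1^{2/3}$ gives the $s^2|v|_1^{-2/3}$ term.
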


In order to prove this theorem we will follow the approach from~\cite{balbussep20}, also employed in~\cite{grojanras25}, which boils down to an estimate of a random walk staying non-negative for a certain number of steps.

In order to facilitate the notation, we shall for the remainder of this section let $w=n\e_+-v$. We are hence interested in the geodesic $\pi(-v,w)$. Note that
\begin{equation}\label{eq:Tmax}
   T(-v,w) = \max\limits_i \big[T(-v, i\e_2)+T(i\e_2+\e_1, w)\big],
\end{equation}
where the maximum is over $-v_2\le i\le w_2=n-v_2$. Note that some geodesic from $-v$ to $w$ leaving the vertical axis at the origin is equivalent to the maximum in~\eqref{eq:Tmax} being attained for $i=0$. For $-v_2\le i\le w_2$, let
\begin{equation}\label{eq:Di_def}
    D_i := 
    \big[T(-v, 0)+T(\e_1, w)\big] - \big[T(-v, i\e_2)+T(\e_1+i\e_2, w)\big].
\end{equation}
Then $D_i$ denotes the difference in travel time when traveling from $-v$ to $w$ leaving the vertical axis at the origin as opposed to at $i\e_2$. Then, the maximum in~\eqref{eq:Tmax} being attained for $i=0$ is equivalent to the event that $D_i\ge0$ for all $-v_2\le i\le w_2$. This leads to the identity
\begin{equation}\label{eq:piDi}
\P\big(0, \e_1\in\pi(-v, w)\big)=\P\big(D_i\ge0\text{ for }-v_2\le i\le w_2\big)
\end{equation}
By relying on Theorem~\ref{boundoutsidebulk}, we find that it is unlikely that the maximum in~\eqref{eq:Tmax} is attained for $|i|\gg|v|_1^{2/3}$. This means that it is unlikely for $D_i$ to be negative for $i\gg|v|_1^{2/3}$ and that, subject to a small error, it will suffice to consider the sequence $D_i$ for $i$ between $-k$ and $k$, where $k$ is approximately of the order $|v|_1^{2/3}$.

Next, we introduce the notation
\begin{equation}
\begin{aligned}
\Delta_j &:= T(-v, j\e_2)-T(-v, (j-1)\e_2),\\
\Delta_j'&:= T(\e_1+(j-1)\e_2, w) - T(\e_1+j\e_2, w).
\end{aligned}
\end{equation}
Note that since any directed path from $x$ to $y$ extends to a directed path from $x$ to $y+\e_2$, it follows that $\Delta_j\ge0$. By a similar argument we conclude that also $\Delta'_j\ge0$. Note further that, by a telescoping argument, we have
\begin{equation*}
    T(-v,0)-T(-v,i\e_2)=\left\{
\begin{aligned}
    \sum_{j=1}^i-\Delta_j &&\text{for }i\ge0;\\
    \sum_{j=i+1}^0\Delta_j &&\text{for }i<0.
\end{aligned}
    \right.
\end{equation*}
In addition, we have
\begin{equation*}
    T(\e_1+i\e_2,w)-T(\e_1,w)=\left\{
\begin{aligned}
    \sum_{j=1}^i-\Delta'_j &&\text{for }i\ge0;\\
    \sum_{j=i+1}^0\Delta'_j &&\text{for }i<0.
\end{aligned}
    \right.
\end{equation*}
In this notation, we may express the differences $D_i$ in terms of the incremental differences $\Delta_j$ and $\Delta'_j$ as
\begin{equation}
D_i=\left\{
    \begin{aligned}
        \sum_{j=1}^i(-\Delta_j+\Delta'_j) && \text{for }i\ge0;\\
        \sum_{j=i+1}^0(\Delta_j-\Delta'_j) && \text{for }i<0.
    \end{aligned}
    \right.
\end{equation}

We next note that the sequence $(\Delta_j)_j$ is independent of the sequence $(\Delta'_j)_j$, since the former is a function of the weight configuration restricted to the left half-plane (including the vertical axis) and the latter is a function of the weight configuration restricted to the right half-plane (excluding the vertical axis). It does not imply that the variables within each sequence $(\Delta_j)_j$ and $(\Delta'_j)_j$ are mutually independent. However, in the limit as $|v|_1$ and $n$ tends to infinity, for $v$ `close' to the main diagonal of $R_{0,n\e_+}$,
the variables $\Delta_j$ approach the boundary variables $\omega^V_{j\e_2}(1/2)$, which are independent and identically distributed. An analogous statement holds for the variables $\Delta'_j$.
That is, for $1\ll|v|_1\le n$, and $v$ close to the main diagonal, $(D_i)_{0\le i\le k}$ is approximately equal in distribution to a zero-mean random walk with i.i.d. increments. Consequently, the probability in~\eqref{eq:piDi} should be approximately equal to the probability of a zero-mean random walk staying above its starting point for order $|v|_1^{2/3}$ steps.

In order to take advantage of the asymptotic independence for finite $n$, the approach will be to bound the increments $\Delta_j$ and $\Delta'_j$ by the analogues for travel times with boundary, which are precisely the boundary weights $\omega^v_{j\e_2}(\lambda)$, for $\lambda$ slightly above or below $1/2$. This will allow us to bound $D_i$ from above and below by a random walk with a small drift instead of zero drift.

\subsubsection{Approximating the increments $\Delta_j$ and $\Delta_j'$ with boundary weights}

For $v\in R_{0,n\e_+}$ and $0\le s\le \frac{1}{18}|v|_1^{1/3}$, let
$$
\lambda^+ := \dfrac{1}{2}+8s|v|_1^{-1/3}\quad \text{and}\quad    \lambda^- := \dfrac{1}{2}-8s|v|_1^{-1/3}.
$$
Then, as $s$ varies in the given interval, we have $\frac{1}{18}\le\lambda^-\le\frac12\le\lambda^+\le\frac{17}{18}$. We move to bound $\Delta_j$ in terms of boundary weights associated to the directions $\lambda^+$ and $\lambda^-$, for $-k\le j\le k$. Below we shall choose $k$ also depending on $v$ and $s$, but for the moment we impose no restriction on $k$ other than $k\le \min\{v_2,w_2\}$.


\begin{lemma}\label{lem: bound the difference of times} Let $0\le k\le\min\{v_2,w_2\}$. On the event that $E_{\rightarrow}(\lambda^-; -v, k\e_2)$ and $E_{\uparrow}(\lambda^+; -v, -k\e_2)$ occur, it holds that
\begin{eqnarray*}
\omega_{j\e_2}^V(\lambda^-)\leq \Delta_j \leq \omega_{j\e_2}^V(\lambda^+),\quad \text{for all }-k\leq j\leq k.
\end{eqnarray*}
\end{lemma}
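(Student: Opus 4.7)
The claim is a two-sided bound on the bulk increment $\Delta_j$ by the boundary increments $\omega^V_{j\e_2}(\lambda^\pm)$. My plan is first to use additivity~\eqref{eq:additivity} to express $\omega^V_{j\e_2}(\lambda)=L_\lambda(j\e_2)-L_\lambda((j-1)\e_2)$, where $L_\lambda(y):=T(\lambda;-v,y)$, and to note that both $L_\lambda$ and $T(-v,\cdot)$ satisfy the interior recurrence $F(y)=\omega_y+\max\{F(y-\e_1),F(y-\e_2)\}$ at vertices $y$ strictly inside the quadrant $-v+\Z_{\ge 0}^2$. Applied at $y=j\e_2$ (which is interior whenever $v_1\ge 1$ and $j>-v_2$), this yields
\[
\omega^V_{j\e_2}(\lambda)-\omega_{j\e_2}=\big(L_\lambda((-1,j))-L_\lambda((j-1)\e_2)\big)_+,\quad\Delta_j-\omega_{j\e_2}=\big(T(-v,(-1,j))-T(-v,(j-1)\e_2)\big)_+.
\]
Writing $D_\lambda(y):=L_\lambda(y)-T(-v,y)$, the desired sandwich is therefore equivalent to the pair of monotonicity statements $D_{\lambda^+}((-1,j))\geq D_{\lambda^+}((j-1)\e_2)$ and $D_{\lambda^-}((-1,j))\leq D_{\lambda^-}((j-1)\e_2)$ for each $-k\leq j\leq k$.

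I would focus on the upper bound; the lower bound follows by the mirror argument after exchanging $\lambda^+\leftrightarrow\lambda^-$, the two coordinate axes, and $E_\uparrow(\lambda^+;-v,-k\e_2)\leftrightarrow E_\rightarrow(\lambda^-;-v,k\e_2)$. The event $E_\uparrow(\lambda^+;-v,-k\e_2)$ forces the argmax in the decomposition
\[
L_{\lambda^+}(y)=\max_{i\ge 0}\bigg\{\sum_{\ell=1}^i\omega^V_{-v+\ell\e_2}(\lambda^+)+T(-v+\e_1+i\e_2,y)\bigg\}
\]
at $y=-k\e_2$ to satisfy $i\geq 1$, meaning the upmost $\lambda^+$-geodesic to $-k\e_2$ takes at least one vertical-boundary step before turning right into the bulk. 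A standard exit-point monotonicity with respect to the endpoint (a classical consequence of the supermodular structure of LPP, in the same spirit as Lemma~\ref{monoofgeod}) then propagates this first-step-up property to the upmost $\lambda^+$-geodesics ending at $(-1,j)$ and at $(j-1)\e_2$ for every $-k\leq j\leq k$.

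The last step, which is the main technical obstacle, is to leverage this propagated property to establish the inequality $D_{\lambda^+}((-1,j))\geq D_{\lambda^+}((j-1)\e_2)$. The basic idea is that $L_{\lambda^+}$ and $T(-v,\cdot)$ satisfy the same interior recurrence and differ only in their boundary values on the south-west boundary of $-v+\Z_{\ge 0}^2$, where by~\eqref{eq:domination} one has $\omega^V_z(\lambda^+),\omega^H_z(\lambda^+)\geq\omega_z$; the first-step-up property guarantees that the $\lambda^+$-geodesic passing through $(-1,j)$ accumulates the vertical-boundary premium $\omega^V(\lambda^+)-\omega\geq 0$ at least as effectively as the one ending at $(j-1)\e_2$. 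Making this precise---either via an explicit path-exchange argument at suitable crossings of the relevant geodesics inside $-v+\Z_{>0}^2$, or through a more global comparison exploiting the coupling of Theorem~\ref{thm:boundary2}---is where the real work lies, and the lower bound involving $\omega^V_{j\e_2}(\lambda^-)$ then follows verbatim by the symmetric argument invoking $E_\rightarrow(\lambda^-;-v,k\e_2)$.
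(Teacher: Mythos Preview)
Your overall architecture is correct: propagate the first-step event from the endpoint $-k\e_2$ (resp.\ $k\e_2$) to all relevant endpoints via exit-point monotonicity, then compare bulk and boundary increments using the domination $\omega^V_z(\lambda)\ge\omega_z$. Your recurrence-based reduction to $D_{\lambda^+}((-1,j))\ge D_{\lambda^+}((j-1)\e_2)$ is also valid (it is sufficient, though not literally equivalent, for the upper bound).

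The problem is that you explicitly leave the decisive step undone: in your own words, ``Making this precise\ldots is where the real work lies.'' That is the entire content of the lemma. The paper closes this gap cleanly by invoking the deterministic increment-comparison Lemma~\ref{lem: lemma B1 from 8} (Lemma~B.1 of~\cite{balbussep20}): if two weight configurations agree in the bulk and one dominates the other on the vertical boundary of the quadrant, then the vertical increments of the associated passage times satisfy $\tilde T(0,x+\e_2)-\tilde T(0,x)\ge T(0,x+\e_2)-T(0,x)$ for all $x$. Applying this with $\tilde\omega$ obtained from $\omega$ by replacing the left-boundary weights by $\omega^V_{-v+i\e_2}(\lambda^+)$, and observing that on $E_\uparrow(\lambda^+;-v,j\e_2)$ the $\lambda^+$-geodesic avoids the horizontal boundary so that $\tilde T(-v,j\e_2)=T(\lambda^+;-v,j\e_2)$, yields $\Delta_j\le\omega^V_{j\e_2}(\lambda^+)$ in one line. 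This is more direct than your route through $(-1,j)$, which forces you to track an off-axis endpoint for no gain.

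Two smaller remarks. First, your appeal to Theorem~\ref{thm:boundary2} as a possible tool is misplaced: that theorem couples boundary processes at two \emph{different} values of $\lambda$, whereas the comparison needed here is between bulk and boundary passage times at a \emph{single} $\lambda$. Second, the paper's propagation of $E_\uparrow$ only needs endpoints $j\e_2$ on the vertical axis, for which the path-crossing argument is immediate; your reduction requires the propagated event also at $(-1,j)$, which is a minor extra wrinkle you would have to justify.
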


For the proof, we will use the following deterministic lemma.

\begin{lemma}\label{lem: lemma B1 from 8}
    Let $(\omg_x)_{x\in\Z^2}$ and $(\tilde\omg_x)_{x\in\Z^2}$ be two collection of (deterministic) weights satisfying the relation $\omega_{i\e_1}\ge\tilde\omega_{i\e_1}$ and $\omega_{i\e_2}\le\tilde\omega_{i\e_2}$ for $i\ge0$, and $\omega_x=\tilde\omega_x$ for $x\ge \e_+$.
    Let $T(0,x)$ and $\tilde{T}(0, x)$ denote the travel time between the origin and $x$ with respect to $\omega$ and $\tilde\omega$, respectively. Then, for all $x\in\Z^2_{\geq 0}$ we have
    \begin{eqnarray*}
    \tilde{T}(0, x+\e_2)-\tilde{T}(0, x)\geq T(0, x+\e_2)-T(0, x);\\
    \tilde{T}(0, x+\e_1)-\tilde{T}(0, x)\leq T(0, x+\e_1)-T(0, x).
    \end{eqnarray*}
\end{lemma}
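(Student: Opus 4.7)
The plan is to prove both inequalities simultaneously by induction on $n := x_1 + x_2$. Set $\Delta_i(x) := T(0, x + \e_i) - T(0, x)$ and $\tilde\Delta_i(x) := \tilde T(0, x + \e_i) - \tilde T(0, x)$, so the goal is to show $\tilde\Delta_1(x) \le \Delta_1(x)$ and $\tilde\Delta_2(x) \ge \Delta_2(x)$ for every $x \in \Z^2_{\ge 0}$. The base case $x = 0$ is immediate, since $\Delta_i(0) = \omega_{\e_i}$ and $\tilde\Delta_i(0) = \tilde\omega_{\e_i}$, and the two announced comparisons are built into the hypothesis on the axis weights.

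For the inductive step, the main tool is the LPP recursion $T(0, y) = \omega_y + \max\{T(0, y - \e_1), T(0, y - \e_2)\}$, valid whenever both predecessors of $y$ lie in $\Z^2_{\ge 0}$. When $x_1 \ge 1$, the point $y = x + \e_2$ has both predecessors in $\Z^2_{\ge 0}$, so unfolding the maximum and telescoping through the vertex $x - \e_1$ yields the identity
\[
\Delta_2(x) = \omega_{x + \e_2} + \bigl( \Delta_2(x - \e_1) - \Delta_1(x - \e_1) \bigr)^+ .
\]
Crucially, $x + \e_2 \ge \e_+$ in this case, so $\tilde\omega_{x + \e_2} = \omega_{x + \e_2}$ and the same identity holds for $\tilde\Delta_2(x)$. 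Since $x - \e_1$ has smaller $\ell^1$-norm, the induction hypothesis gives $\tilde\Delta_2(x - \e_1) - \tilde\Delta_1(x - \e_1) \ge \Delta_2(x - \e_1) - \Delta_1(x - \e_1)$, and monotonicity of the positive part yields $\tilde\Delta_2(x) \ge \Delta_2(x)$. The remaining case $x_1 = 0$ is immediate, since then $x + \e_2$ sits on the vertical axis and $\Delta_2(x) = \omega_{x + \e_2} \le \tilde\omega_{x + \e_2} = \tilde\Delta_2(x)$ by hypothesis. The horizontal increment $\Delta_1(x)$ is treated symmetrically via the identity $\Delta_1(x) = \omega_{x + \e_1} + (\Delta_1(x - \e_2) - \Delta_2(x - \e_2))^+$ when $x_2 \ge 1$, and by direct reading of the axis hypothesis when $x_2 = 0$.

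The main (and essentially only) obstacle is the bookkeeping between the axis-versus-bulk cases: each time the recursion is applied one needs to verify that both predecessors of $x + \e_i$ belong to $\Z^2_{\ge 0}$ and that the pair of increments to which the induction hypothesis is applied is well-defined with the correct direction of comparison. Once this accounting is in place, the argument is driven entirely by the monotonicity of the positive part together with the signs of $\tilde\omega_y - \omega_y$ on the two axes, and presents no real difficulty.
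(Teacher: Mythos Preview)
Your proof is correct. The paper does not give its own proof of this lemma but simply cites Lemma~B.1 of \cite{balbussep20}; your inductive argument via the positive-part recursion $\Delta_2(x)=\omega_{x+\e_2}+(\Delta_2(x-\e_1)-\Delta_1(x-\e_1))^+$ is precisely the standard proof found in that reference, so there is nothing to compare.
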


\begin{proof}
See Lemma~B.1 in~\cite{balbussep20}.
\end{proof}

\begin{proof}[Proof of Lemma \ref{lem: bound the difference of times}]
Via a standard path-crossing argument, we note that if some $\lambda^+$-geodesic from $-v$ to $j\e_2$ takes the first step upwards, then some $\lambda^+$-geodesic from $-v$ to $(j+1)\e_2$ will take its first step upwards too. It follows that if $E_\uparrow(\lambda^+,-v,-k\e_2)$ occurs, then $E_\uparrow(\lambda^+,-v,j\e_2)$ occurs for all $j\ge-k$.

    Let $(\tilde\omega_x)_{x\in\Z^2}$ denote the weights configuration where $\tilde{\omega}_{-v+i\e_2}=\omega_{-v+i\e_2}^V(\lambda^+)$ for $i\in\Z$, and $\tilde{\omega}_x=\omega_x$ elsewhere.
    From Lemma~\ref{lem: lemma B1 from 8}, and the observation that on $E_{\uparrow}(\lambda^+;-v, j\e_2)$ we have $\tilde{T}(-v, j\e_2) = T(\lambda^+;-v, j\e_2)$, it follows that 
    \begin{align*}
        \Delta_j&=T(-v, j\e_2) - T(-v, (j-1)\e_2) \\
        &\leq\tilde{T}(-v, j\e_2) - \tilde{T}(-v, (j-1)\e_2)\\
        &= T({\lambda^+};-v, j\e_2)-T({\lambda^+};-v, (j-1)\e_2) = \omega_{j\e_2}^V(\lambda^+),
    \end{align*}
    where  the last equality follows from additivity of the boundary travel times~\eqref{eq:additivity}.

A analogous argument shows that if $E_{\rightarrow}(\lambda^-; -v, k\e_2)$ occurs, then $E_{\rightarrow}(\lambda^-; -v, k\e_2)$ occur for all $j\le k$, and hence that $\Delta_j \ge \omega_{j\e_2}^V(\lambda^-)$ for all $j\le k$ on the event $E_{\rightarrow}(\lambda^-; -v, k\e_2)$. 
\end{proof}

We proceed with an estimate of the probability of the events in Lemma~\ref{lem: bound the difference of times}. The events will occur with probability close to one for $k$ of order $s|v|_1^{2/3}$. We henceforth set $k:=\lfloor 2s|v|_1^{2/3}\rfloor+1$.

\begin{figure}
    \centering
    \includegraphics[width=0.7\linewidth]{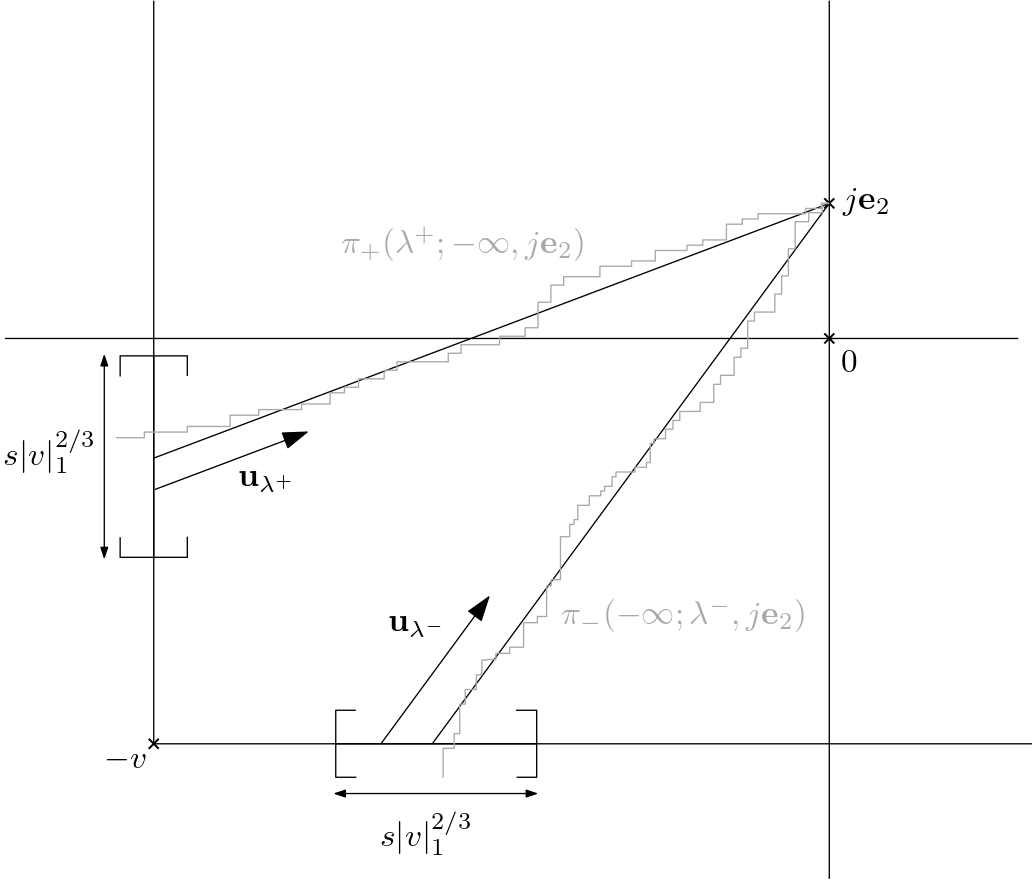}
    \caption{Schematic representation of the directed geodesics given by $\lambda^+$ and $\lambda^-$}
    \label{fig:directions for lambda}
\end{figure}

\begin{lemma}\label{lem: assumption}
    There exist constants $0<c,C<\infty$ such that the following holds. For every $v\in\Z^2_{\ge0}$ and $1\le s\le\frac{1}{18}|v|_1^{1/3}$ such that $\min\{v_1,v_2\} \geq C$ and $|v_2-v_1|\le s|v|_1^{2/3}$,
    we have with $k = \lfloor 2s|v|_1^{2/3}\rfloor+1$ that
$$
\P\Big(\omega_{j\e_2}^V(\lambda^-)\leq \Delta_j \leq \omega_{j\e_2}^V(\lambda^+)\text{ for all }-k\leq j\leq k\Big)\ge1-C\exp(-cs^3).
$$
\end{lemma}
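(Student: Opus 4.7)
The plan is to apply Lemma~\ref{lem: bound the difference of times}, thereby reducing the problem to bounding
\[
\P\big(E_{\rightarrow}(\lambda^-;-v,k\e_2)\big) \quad\text{and}\quad \P\big(E_{\uparrow}(\lambda^+;-v,-k\e_2)\big)
\]
from below by $1-Ce^{-cs^3}$. The diagonal reflection $(x_1,x_2)\mapsto(x_2,x_1)$ interchanges up- and right-steps and sends the parameter $\lambda$ to $1-\lambda$, mapping the second event to an instance of the first (with $v$ replaced by its reflection and $k\e_2$ by $-k\e_1$). A union bound combined with this symmetry reduces the whole statement to estimating $\P\big(\neg E_{\rightarrow}(\lambda^-;-v,k\e_2)\big)$.

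For this, the key step is to recast the event $E_{\rightarrow}(\lambda^-;-v,k\e_2)$ in terms of the backwards infinite $\lambda^-$-geodesic from $k\e_2$. By Lemma~\ref{ctyofgeod}, inside the bulk rectangle $R_{-v+\e_+,k\e_2}$ the geodesic $\pi(\lambda^-;-v,k\e_2)$ coincides with $\pi(\lambda^-;-\infty,k\e_2)$. Since the finite geodesic must enter this bulk either through the bottom edge $H_{-v+\e_2}\cap R_{-v+\e_+,k\e_2}$ (which is exactly the case $E_{\rightarrow}$) or through the left edge, and since the $\e_1$-coordinate along the backwards infinite geodesic is nonincreasing, $E_{\rightarrow}$ is equivalent to the intersection $\pi(\lambda^-;-\infty,k\e_2)\cap H_{-v+\e_2}$ containing a point of $\e_1$-coordinate at least $-v_1+1$.

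I would then invoke Theorem~\ref{backgeodlargedev}, translated so that $k\e_2$ plays the role of the origin, with parameter $r=(k+v_2-1)/(1-\lambda^-)$ chosen so that the characteristic point $k\e_2-r\u_{\lambda^-}$ lies on $H_{-v+\e_2}$. The theorem then guarantees that, off an event of probability at most $Ce^{-cs^3}$, the intersection above is contained in an interval of length $2sr^{2/3}$ centered at $\e_1$-coordinate $-r\lambda^-$. Writing $\alpha=\lambda^-/(1-\lambda^-)$ and decomposing
\[
v_1-1-r\lambda^-=(v_1-v_2)+(1-\alpha)v_2-\alpha k+(\alpha-1),
\]
the decisive estimate is that the choice $\lambda^-=\tfrac12-8s|v|_1^{-1/3}$ yields $(1-\alpha)v_2\ge 8s|v|_1^{2/3}$ (using $v_2\ge 17|v|_1/36$, which follows from $|v_1-v_2|\le s|v|_1^{2/3}$ and $s\le|v|_1^{1/3}/18$), and this dominates the remaining terms $|v_1-v_2|\le s|v|_1^{2/3}$, $\alpha k\le 2s|v|_1^{2/3}+1$, and $|\alpha-1|\le 1$, giving $v_1-1-r\lambda^-\ge 3s|v|_1^{2/3}$, which is comfortably larger than the fluctuation bound (which is of order $s|v|_1^{2/3}$ since $r$ is comparable to $|v|_1$).

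The hard part will be this last quantitative calculation: the constant $8$ in the definition of $\lambda^{\pm}$ and the factor $2$ in $k\approx 2s|v|_1^{2/3}$ are tuned precisely so that the contribution of $(1-\alpha)v_2$ beats the competing error terms. The assumption $\min\{v_1,v_2\}\ge C$ ensures both that $v_2$ is a macroscopic fraction of $|v|_1$ and that the hypotheses $r\ge r_0$ and $s\le r^{1/3}$ of Theorem~\ref{backgeodlargedev} are met.
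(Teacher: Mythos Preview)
Your proposal is correct and follows essentially the same strategy as the paper: reduce via Lemma~\ref{lem: bound the difference of times} to the two exit-direction events, treat one of them, and invoke Theorem~\ref{backgeodlargedev} after identifying the characteristic point and checking that the typical fluctuation window of width $sr^{2/3}$ misses the corner. The only cosmetic differences are that the paper works out $E_{\uparrow}(\lambda^+;-v,-k\e_2)$ and intersects the infinite backward geodesic with the vertical line $V_{-v}$ (thereby using the symmetric version of Theorem~\ref{backgeodlargedev}), whereas you work out $E_{\rightarrow}(\lambda^-;-v,k\e_2)$ and intersect with the horizontal line, applying Theorem~\ref{backgeodlargedev} as stated; your decomposition of $v_1-1-r\lambda^-$ is algebraically equivalent to the paper's verification of~\eqref{eq:paraboundagain}. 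One small wording point: the relation you call an ``equivalence'' between $E_\rightarrow$ and the infinite-geodesic condition is really only the implication you need (the whole intersection lying to the right of $-v_1+1$ forces $E_\rightarrow$), but your subsequent estimate controls the entire interval, so this does not affect the argument.
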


\begin{proof}
According to Lemma~\ref{lem: bound the difference of times}, it will suffice to show that each of the events $E_{\rightarrow}(\lambda^-; -v, k\e_2)$ and $E_{\uparrow}(\lambda^+; -v, -k\e_2)$ occur with probability $1-C\exp(-cs^3)$ for some $0<c,C<\infty$. Since the two events can be treated similarly, we consider only the latter.

Recall that $V_{-v}$ denotes the vertical line passing through $-v$. Let $u$ denote the intersection between $V_{-v}$ and the ray $\big\{-k\e_2- t\u_{\lambda^+}\big\}_{t\ge0}$, and let $x$ be such that $u = -k\e_2- x\u_{\lambda^+}$. That is, $x = v_1/\lambda^+$. Let $r_0$ be as in Theorem~\ref{backgeodlargedev}. By assumption we have $\frac12\le\lambda^+\le\frac{17}{18}$, and for $v_1\ge r_0$ we have $x\ge v_1/\lambda^+\ge r_0$. By Theorem~\ref{backgeodlargedev} there exist constants $0<c,C<\infty$ such that for $0\le s\le x^{1/3}$ we have
\begin{equation}\label{eq:geoboundagain}
\P\left(\pi(\lambda^+; -\infty, -k\e_2)\cap V_{-v}\subseteq \big[u-sx^{2/3}\e_2, u+sx^{2/3}\e_2\big]\right)\geq 1-C\exp(-cs^3).
\end{equation}
Note that the assumptions that $|v_2-v_1|\le s|v|_1^{2/3}$ and  $s\le\frac{1}{18}|v|_1^{1/3}$ imply, in particular, that $|v|_1\le3v_1$. Moreover, $s\le\frac{1}{18}|v|_1^{1/3}$ and $|v|_1\le3v_1$ guarantee that $s\le v_1^{1/3}\le x^{1/3}$.
All that remains is therefore to verify that $u-sx^{2/3}\e_2$ is above $-v$, and hence that the event in~\eqref{eq:geoboundagain} implies $E_{\uparrow}(\lambda^+; -v, -k\e_2)$.

Since $u_2=-k-\frac{1-\lambda^+}{\lambda^+}v_1$ and $x=v_1/\lambda^+$, we need to verify that
\begin{equation}\label{eq:paraboundagain}
v_2-\frac{1-\lambda^+}{\lambda^+}v_1-s\Big(\frac{v_1}{\lambda^+}\Big)^{2/3}>k.
\end{equation}
Using the assumption that $v_2\ge v_1-s|v|_1^{2/3}$, and the bounds $\lambda^+\ge\frac12$ and $(1-\lambda^+)/\lambda^+\le1+(1-2\lambda^+)$, we obtain that
$$
v_2-\frac{1-\lambda^+}{\lambda^+}v_1-s\Big(\frac{v_1}{\lambda^+}\Big)^{2/3}\ge v_1-s|v|_1^{2/3}-v_1\big(1-16s|v|_1^{-1/3}\big)-2sv_1^{2/3}.
$$
Using again the observation that $\frac13|v|_1\le v_1\le|v|_1$ gives the further lower bound
$$
16sv_1|v|_1^{-1/3}-s|v|_1^{2/3}-2sv_1^{2/3}\ge \frac{16}{3}s|v|_1^{2/3}-3s|v|_1^{2/3}= \frac{7}{3}s|v|_1^{2/3}>k.
$$
So~\eqref{eq:paraboundagain} holds, and the proof is complete.
\end{proof}

Lemma~\ref{lem: assumption} shows that we may approximate the increments $\Delta_j$ for $-k\le j\le k$ by boundary weights. We need analogous bounds on $\Delta_j'$ for $-k\le j\le k$. Such bounds can be obtained through a construction of stationary travel times with boundary weights on the top-right boundary as opposed to the bottom-left boundary as in the case of $T(\lambda;x,y)$. Instead of repeating the construction in Section~\ref{se:stationarylpp} for the top-right boundary, we may instead appeal to the existing construction via a simple rotation of the plane.

Let $x\mapsto-x+\e_1$ denote the transformation of $\Z^2$ obtained by first translating $\e_1$ to the origin, and then performing a rotation by 180 degrees. This transformation translates the top-right boundary of $R_{-v,w}$ to the bottom-left boundary of $R_{-w+\e_1,v+\e_1}$. Denote by $\tilde\omega^V_x(\lambda)$ and $\tilde\omega^H_x(\lambda)$ the boundary weights at $x$, obtained from the construction in Section~\ref{se:stationarylpp}, after applying the transformation $x\mapsto -x+\e_1$. Finally, set $\hat\omega^V_x(\lambda):=\tilde\omega^V_{-x+\e_1}(\lambda)$ and $\hat\omega^H_x(\lambda):=\tilde\omega^H_{-x+\e_1}(\lambda)$.

Note that the collections $(\hat\omega^V_x(\lambda))_{x\in\Z^2}$ and $(\hat\omega^H_x(\lambda))_{x\in\Z^2}$ are in effect boundary weights generated from stationary travel times with boundary weights on the top-right boundary, and that in distribution $(\hat\omega^V_{-x}(\lambda))_{x\in\Z^2}$ and $(\hat\omega^H_{-x}(\lambda))_{x\in\Z^2}$ are equal to $(\omega^V_x(\lambda))_{x\in\Z^2}$ and $(\omega^H_x(\lambda))_{x\in\Z^2}$.
Let
$$
    \hat{\lambda}^+ := \dfrac{1}{2}+8s|w-\e_1|_1^{-1/3}
    \quad\text{and}\quad
    \hat{\lambda}^- := \dfrac{1}{2}-8s|w-\e_1|_1^{-1/3}.
$$
The following is the required analogue of Lemma~\ref{lem: assumption}.

%

\begin{lemma}\label{lem: another bound on the difference of times}  
There exist constants $0<c, C<\infty$ such that the following holds. For every $n\ge1$, $v\in \Z^2_{\geq0}$ and $1\le s\le\frac{1}{18}|v|_1^{1/3}$ such that $|v|_1\le n$ and $|v_2-v_1|\le s|v|_1^{2/3}$, we have with $w=n\e_+-v$ and $k=\lfloor 2s|v|_1^{2/3}\rfloor+1$ that
$$
\P\Big(\hat{\omega}_{(j-1)\e_2+\e_1}^V(\hat{\lambda}^-)\leq \Delta'_j \leq \hat{\omega}_{(j-1)\e_2+\e_1}^V(\hat{\lambda}^+)\text{ for all }-k\le j\le k\Big)\ge1-C\exp(-cs^3).
$$
Moreover, the weights $(\omega^V_{j\e_2}(\lambda))_{j\in\Z}$ and $(\hat\omega^V_{\e_1+j\e_2}(\lambda'))_{j\in\Z}$ are independent for every choice of $\lambda$ and $\lambda'$.
\end{lemma}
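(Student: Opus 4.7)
The plan is to reduce this lemma to Lemma~\ref{lem: assumption} via the reflection $\phi : x \mapsto -x + \e_1$ of the lattice. I would set $\tilde\omega_x := \omega_{\phi(x)}$, which produces a collection of i.i.d.\ $\mathrm{Geom}(p)$ weights with the same law as $(\omega_x)_{x \in \Z^2}$. Since $\phi$ is an involution that reverses the partial order and maps directed paths from $u$ to $v$ bijectively to directed paths from $\phi(v)$ to $\phi(u)$ preserving the accumulated weight, the identity $T(u,v) = \tilde T(-v + \e_1, -u + \e_1)$ holds for all $u \le v$, where $\tilde T$ denotes the last-passage time in the rotated field. Applying this to the two travel times defining $\Delta'_j$ gives
\[
\Delta'_j = \tilde T(-w + \e_1, -(j-1)\e_2) - \tilde T(-w + \e_1, -j\e_2) = \tilde\Delta_{1-j},
\]
where $\tilde\Delta_i := \tilde T(-w + \e_1, i\e_2) - \tilde T(-w + \e_1, (i-1)\e_2)$ is precisely the analogue of $\Delta_i$ for the rotated field with base point $w - \e_1$.

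Next I would apply Lemma~\ref{lem: assumption} in the rotated field with $v$ replaced by $\tilde v := w - \e_1$ and with $s$ inflated by a universal multiplicative constant to absorb lower-order errors. The hypothesis to check is $|\tilde v_2 - \tilde v_1| \le C s|\tilde v|_1^{2/3}$, and I would verify it by writing $|\tilde v_2 - \tilde v_1| = |v_1 - v_2 + 1| \le s|v|_1^{2/3} + 1 \le 2s|v|_1^{2/3}$ (valid for $s, |v|_1 \ge 1$), combined with a short comparison between $|v|_1$ and $|\tilde v|_1 = 2n - |v|_1 - 1$. The conclusion of Lemma~\ref{lem: assumption} in the rotated field, namely $\tilde\omega^V_{i\e_2}(\hat\lambda^-) \le \tilde\Delta_i \le \tilde\omega^V_{i\e_2}(\hat\lambda^+)$ for $|i|$ up to $k$, would then translate back via $\Delta'_j = \tilde\Delta_{1-j}$ and the defining identity $\hat\omega^V_x(\lambda) = \tilde\omega^V_{-x+\e_1}(\lambda)$, yielding the claimed two-sided bound with probability at least $1 - C\exp(-c s^3)$.

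For the independence statement, my plan is to invoke the measurability description recorded in Remark~\ref{rem:construction}. The weight $\omega^V_{j\e_2}(\lambda)$ is a measurable function of the bulk weights $\{\omega_y : y \le j\e_2\} \subseteq \{y : y_1 \le 0\}$ together with an independent family of $\mathrm{Geom}$ variables supported on the vertical axis. Transporting this statement through $\phi$ shows that $\hat\omega^V_{\e_1 + j\e_2}(\lambda')$ is measurable with respect to $\{\omega_z : z \ge \e_1 + j\e_2\} \subseteq \{z : z_1 \ge 1\}$ and an independent family of $\mathrm{Geom}$ variables on the vertical line $V_{\e_1}$. Since the two subsets of bulk weights are disjoint and the two families of auxiliary randomness can be chosen independent of each other by construction, the two collections are independent. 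The main obstacle I anticipate is carrying out the transport of the queueing-theoretic construction through $\phi$ rigorously---in particular ensuring that the extra $\mathrm{Geom}$ randomness used for the rotated construction is disjoint from that used on the bottom-left boundary---after which both parts of the lemma follow from Lemma~\ref{lem: assumption} and Remark~\ref{rem:construction} with only notational changes.
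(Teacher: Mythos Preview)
Your approach is essentially the same as the paper's: both reflect via $\phi:x\mapsto -x+\e_1$, identify $\Delta'_j$ with the rotated increment $\tilde\Delta_{1-j}$, and use Remark~\ref{rem:construction} for the independence claim. One practical difference: the paper does not invoke Lemma~\ref{lem: assumption} as a black box in the rotated field, but instead cites Lemma~\ref{lem: bound the difference of times} after reflection and re-verifies the events $E_{\rightarrow}(\hat\lambda^-;-w+\e_1,k\e_2)$ and $E_{\uparrow}(\hat\lambda^+;-w+\e_1,-k\e_2)$ directly from Theorem~\ref{backgeodlargedev}, repeating the inequality check from the proof of Lemma~\ref{lem: assumption}. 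The reason is a parameter mismatch your plan glosses over: the $k$ in the present statement is $\lfloor 2s|v|_1^{2/3}\rfloor+1$, whereas a black-box application of Lemma~\ref{lem: assumption} with $\tilde v=w-\e_1$ yields bounds for $|i|\le\lfloor 2s|\tilde v|_1^{2/3}\rfloor+1$ and with $\tilde\lambda^\pm$ defined via $|\tilde v|_1$; moreover the index shift $j\mapsto 1-j$ requires the range $|i|\le k+1$. Since $\hat\lambda^\pm$ is already defined via $|\tilde v|_1$, inflating $s$ would alter the parameter and not resolve the issue cleanly. Your argument is correct in substance, but to make it precise you would in effect re-enter the proof of Lemma~\ref{lem: assumption}, which is exactly what the paper does.
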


\begin{proof}
Via the transformation $x\mapsto-x+\e_1$ and Lemma~\ref{lem: bound the difference of times}, we have on the events $E_\rightarrow(\hat\lambda^-,-w+\e_1,k\e_2)$ and $E_\uparrow(\hat\lambda^+,-w+\e_1,-k\e_2)$ that
$$
\hat{\omega}_{(j-1)\e_2+\e_1}^V(\hat{\lambda}^-)\leq \Delta'_j \leq \hat{\omega}_{(j-1)\e_2+\e_1}^V(\hat{\lambda}^+),\quad\text{for all }-k\le j\le k.
$$
It thus suffices to verify that the events $E_\rightarrow(\hat\lambda^-,-w+\e_1,k\e_2)$ and $E_\uparrow(\hat\lambda^+,-w+\e_1,-k\e_2)$ occur with probability $1-C\exp(-cs^3)$ for some $0<c,C<\infty$ under the stated conditions.

By assumption we have $|v|_1\le n$, and hence $n-1\le|w-\e_1|_1\le2n$. By increasing $C$ if necessary, we may assume $n\ge17$. In this case, we have $\frac12\le\hat\lambda^+\le\frac{35}{36}$, and by increasing $n$ further if necessary, we may again apply Theorem~\ref{backgeodlargedev} to obtain that $E_\uparrow(\hat\lambda^+;-w+\e_1,-k\e_2)$ holds with probability $1-C\exp(-cs^3)$, provided that
$$
w_2-\frac{1-\hat\lambda^+}{\hat\lambda^+}(w_1-1)-s\bigg(\frac{w_1-1}{\hat\lambda^+}\bigg)^{2/3}>k.
$$
Noting, in particular, that $|w_2-w_1-1|\le|v_2-v_1|+1\le s|v|_1^{2/3}+1$, this follows as before.


    
    Finally, for every choice of $\lambda$ and $\lambda'$, the weights $(\omega^V_{j\e_2}(\lambda))_{j\in\Z}$ and $(\hat\omega^V_{\e_1+j\e_2}(\lambda'))_{j\in\Z}$ are independent since the former is measurable with respect to the weight configuration restricted to $\Z_{\le0}\times\Z$, whereas the latter is measurable with respect to the weight configuration restricted to $\Z_{\ge1}\times\Z$; see Remark~\ref{rem:construction}.
\end{proof}

We remark that when $|v|_1\ll n$ then bound in Lemma~\ref{lem: another bound on the difference of times} is suboptimal, in that it would hold for $j$ up to order $sn^{2/3}$ (assuming $sn^{2/3}\le\min\{v_1,w_2\}$). However, there is no gain in this for our intended purposes. We have instead stated the lemma so that its assumptions match those of Lemma~\ref{lem: assumption}, in order to facilitate its later application.

%

\subsubsection{A random walk with a small positive drift}

For ease of notation, we let for $j\in\Z$
\begin{equation}\label{eq:YjZj}
\begin{aligned}
    Y_j &:= -\omega_{j\e_2}^V(\lambda^-)+\hat{\omega}^V_{(j-1)\e_2+\e_1}(\hat{\lambda}^+);\\
    Z_j &:= \omega_{j\e_2}^V(\lambda^+)-\hat{\omega}^V_{(j-1)\e_2+\e_1}(\hat{\lambda}^-).
\end{aligned}
\end{equation}
It follows from Lemmas~\ref{lem: assumption} and~\ref{lem: another bound on the difference of times} that with probability $1-2C\exp(-cs^3)$ we have
\begin{equation*}
D_i=\left\{
    \begin{aligned}
        \sum_{j=1}^i(-\Delta_j+\Delta'_j)\le \sum_{j=1}^iY_j &&& \text{for }0\le i\le k;\\
        \sum_{j=i+1}^0(\Delta_j-\Delta'_j)\le \sum_{j=i+1}^0Z_j &&& \text{for }-k\le i<0.
    \end{aligned}
    \right.
\end{equation*}

As we shall see, in Lemma~\ref{lem: estimating mu, delta, sigma} below, the variables in each of the sequences $(Y_j)_{j \in \Z}$ and $(Z_j)_{j \in \Z}$ are i.i.d.\ with positive mean of order $s|v|_1^{-1/3}$. In addition, the two sequences are in fact mutually independent. The sequence $(D_i)_{-k\le i\le k}$ can thus be bounded by two independent random walks with a slight positive drift. In light of~\eqref{eq:piDi}, it will suffice to bound the probability that these random walks remain non-negative for distance $k=\lfloor2s|v|_1^{2/3}\rfloor+1$. That is, we will need a bound on the non-recurrence of a random walk with a positive drift. Recurrence bounds of this type for random walks without drift are classical; see e.g.\ \cite[Lemma~5.1.8]{lawlim10}. The following proposition is an extension to random walks with non-negative drift, and is proved similarly.



\begin{proposition}\label{prop: random walk}
Let $X_1,X_2,\ldots$ be i.i.d.\ random variables with mean $\mu\ge0$, variance $\sigma^2>0$ satisfying $\delta:=\P(X_1\ge1)>0$. Set $S_0=0$ and let $S_k=X_1+\ldots+X_k$ for $k\ge1$. Then
$$
\P\Big(S_k\ge0\text{ for }k=1,2,\ldots,N\Big)\le\frac{4\sigma}{\delta\sqrt{N}}+\frac{\mu}{\delta}.
$$
\end{proposition}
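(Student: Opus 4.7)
My plan is to adapt the classical argument for mean-zero random walks (\cite[Lemma~5.1.8]{lawlim10}) to the case of small non-negative drift.

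The starting point is a reformulation via time reversal. Setting $\tilde X_k := X_{N+1-k}$ and $\tilde S_k := \tilde X_1 + \cdots + \tilde X_k$, the event $A_N = \{S_k \ge 0,\ 1 \le k \le N\}$ is equivalent to $\tilde S_N$ attaining the running maximum $\max_{0 \le k \le N}\tilde S_k$. Equivalently, writing $R_k := \max_{0 \le j \le k}\tilde S_j - \tilde S_k$ for the Lindley-reflected walk, we have $A_N = \{R_N = 0\}$, which converts a path constraint into an endpoint event. By independence of $\tilde X_N$ and $R_{N-1}$, one then obtains $\P(R_N = 0) = \E[\bar F(R_{N-1})]$, where $\bar F(r) := \P(X \ge r)$. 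The hypothesis $\delta = \P(X_1 \ge 1)$ gives $\bar F(r) \le \delta$ for $r \ge 1$, whence $\P(A_N) \le \P(R_{N-1} < 1) + \delta\,\P(R_{N-1} \ge 1)$.

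The main step is to estimate $\P(R_{N-1} < 1)$ by combining Kolmogorov's maximal inequality applied to the martingale $S_k - k\mu$ (which yields fluctuation control of order $\sigma\sqrt N$) with the $\delta$-scale at which the reflected walk can enter the unit interval near zero. Quantifying this gives $\P(R_{N-1} < 1) \lesssim \sigma/(\delta\sqrt N) + \mu/\delta$: the first term corresponds to the mean-zero fluctuation scale $\sigma\sqrt N$ divided by the discreteness scale $1/\delta$ set by the probability of jumps of size $\ge 1$, while the second term arises from integrating the drift $\mu$ over $N$ steps against the same jump density. Combining with the preliminary bound yields the proposition.

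The main obstacle is this final quantitative estimate, which requires a careful non-asymptotic control of how the reflected walk approaches zero. The classical \cite[Lemma~5.1.8]{lawlim10} treats the mean-zero case via Chebyshev-type and maximal inequalities; the adaptation here proceeds by centering with respect to $\mu$ and tracking how the drift shifts the bound, yielding the two-term structure. The explicit constant $4$ in the statement comes from tracking constants in Kolmogorov's inequality together with the one-step decomposition above.
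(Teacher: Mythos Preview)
Your time-reversal and Lindley recursion set-up is correct, but the argument becomes circular at the crucial step. After your one-step decomposition you need to bound $\P(R_{N-1}<1)$. For integer-valued increments (which is the only case used in the paper) this is exactly $\P(R_{N-1}=0)=q_{N-1}$, so you have reduced $q_N$ to $q_{N-1}+\delta$ --- a true but useless inequality. In the general case $\P(R_{N-1}<1)$ is still a ``walk is near its running maximum at time $N-1$'' probability, which is the same quantity you set out to bound. Kolmogorov's maximal inequality controls $\P\big(\max_{k\le N}|S_k-k\mu|\ge t\big)$; it does not by itself give any bound on the probability that the endpoint is within $1$ of the running max, so the sentence ``quantifying this gives $\P(R_{N-1}<1)\lesssim \sigma/(\delta\sqrt N)+\mu/\delta$'' is precisely the place where a real argument is missing.

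What the paper does instead is sum over all times rather than look at a single step. Define $J_{k,N}:=\{S_{k+1},\dots,S_N\ge S_k+1\}$; then $\P(J_{k,N})\ge \delta q_{N-k}\ge \delta q_N$, and deterministically $\sum_{k=0}^{N-1}\mathbf{1}_{J_{k,N}}\le M_N-m_N$, the range of the walk. Taking expectations gives $N\delta q_N\le \E[M_N-m_N]$. Removing the drift, $\E[M_N-m_N]\le 2\E[\max_{k\le N}|S_k-k\mu|]+\mu N$, and Kolmogorov's inequality (integrated) gives $\E[\max_{k\le N}|S_k-k\mu|]\le 2\sigma\sqrt N$, yielding the bound with the exact constant $4$. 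The structural idea you are missing is this conversion of one copy of $q_N$ into $N$ copies via the events $J_{k,N}$, which is what makes Kolmogorov's inequality applicable. Your reflected-walk picture can be made to work, but only by summing $\P(R_k=0)$ over $k$ and using monotonicity of $q_k$ --- at which point you have essentially reproduced the paper's argument.
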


For $\sigma$ and $\delta$ bounded away from zero and infinity, and with the mean varying with $N$, the bound can be expressed as $C\max\{1/\sqrt{N},\mu\}$ for some constant $C<\infty$.

\begin{proof}
Let $q_N:=\P(S_k\ge0\text{ for }k=1,2,\ldots,N)$, and note that $q_N$ is non-increasing in $N$. We have, moreover, that
\begin{equation}\label{eq:+1-bound}
\P(S_1\ge1,\ldots,S_N\ge1)\ge\P(X_1\ge1)\P(S_k-X_1\ge0\text{ for }k=2,3,\ldots,N)=\delta q_{N-1}\ge\delta q_N.
\end{equation}
For $N\ge k\ge1$ let $J_{k,N}:=\{S_{k+1},\ldots,S_N\ge S_k+1\}$, and note (from~\eqref{eq:+1-bound}) that
$$
\P(J_{k,N})=\P(S_1\ge1,\ldots,S_{N-k}\ge1)\ge\delta q_{N-k}\ge \delta q_N.
$$

Let $m_N$ and $M_N$ denote the minimal and maximal values, respectively, of $S_k$ over the range $k=0,1,\ldots,N$. Then, since the number of times the process can jump up by one, and remain above that level until time $N$, is bounded by the range of $S_k$, that is that
$$
\sum_{k=0}^{N-1}{\bf 1}_{J_{k,N}}\le M_N-m_N,
$$
we obtain that
\begin{equation}\label{eq:qn_bound}
N\delta q_N\le\sum_{k=0}^{N-1}\P(J_{k,N})\le\E[M_N-m_N].
\end{equation}

Next, we define $S_k':=S_k-\mu k$ by removing the drift from $S_k$. Let $m_N'$ and $M_N'$ denote the minimal and maximal values, respectively, of $S_k'$ over the range $k=0,1,\ldots,N$. Since $\mu\ge0$ by assumption, we have $m_N\ge m_N'$ and $M_N\le M_N'+\mu N$. Consequently, we have
\begin{equation}\label{eq:range_bound}
\E[M_N-m_N]\le\E[M_N'-m_N']+\mu N\le2\E\big[\max\{|S_k'|:k=0,1,\ldots,N\}\big]+\mu N.
\end{equation}
Kolmogorov's maximal inequality gives that
$$
\P\big(\max\{|S_k'|:k=0,1,\ldots,N\}\ge t\big)\le\frac{\Var(S_N')}{t^2}=\frac{\sigma^2 N}{t^2},
$$
and hence that
\begin{equation}\label{eq:mean_bound}
\begin{aligned}
\E\big[\max\{|S_k'|:k=0,1,\ldots,N\}\big]&=\int_0^\infty \P\big(\max\{|S_k'|:k=0,1,\ldots,N\}\ge t\big)\,dt\\
&\le\sigma\sqrt{N}+\int_{\sigma\sqrt{N}}^\infty \frac{\sigma^2 N}{t^2}\,dt=\sigma\sqrt{N}+\frac{\sigma^2 N}{\sigma\sqrt{N}}=2\sigma\sqrt{N}.
\end{aligned}
\end{equation}

Finally, putting together~\eqref{eq:qn_bound},~\eqref{eq:range_bound} and~\eqref{eq:mean_bound}, we obtain that
$$
q_N\le\frac{1}{\delta N}\E[M_N-m_N]\le\frac{1}{\delta N}\Big(2\E\big[\max\{|S_k'|:k=0,1,\ldots,N\}\big]+\mu N\Big)\le\frac{1}{\delta N}\Big(4\sigma\sqrt{N}+\mu N\Big),
$$
as required.
\end{proof}

In order to apply the proposition, we need information regarding the variables $Y_j$ and $Z_j$ in~\eqref{eq:YjZj}.

\begin{lemma}\label{lem: estimating mu, delta, sigma} 
There exist constants $0<c,C<\infty$ such that the following holds. For every $n\ge1$, $v\in R_{0,n\e_+}$ and $1\le s\le \frac{1}{18}|v|_1^{1/3}$ such that $|v|_1\le n$ we have that
\begin{enumerate}[label=(\roman*)]
\item $(Y_j)_{j=1,\ldots,k}$ and $(Z_j)_{j=-k+1\ldots,0}$ are i.i.d.\ sequences;
\item $(Y_j)_{j=1,\ldots,k}$ and $(Z_j)_{j=-k+1\ldots,0}$ are mutually independent;
\item $\P(Y_j\ge1)\ge c$ and $\P(Z_j\ge1)\ge c$, and $\Var(Y_j)\ge c$ and $\Var(Z_j)\ge c$;
\item $0\le\E[Y_j]\le Cs|v|_1^{-1/3}$ and $0\le\E[Z_j]\le Cs|v|_1^{-1/3}$.
\end{enumerate}
\end{lemma}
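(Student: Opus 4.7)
The plan is to read off parts \emph{(i)}--\emph{(iii)} directly from the queueing-theoretic construction of Section~\ref{se:stationarylpp}, and then estimate the small mean in part \emph{(iv)} via a first-order expansion of the shape function in $\lambda$. For \emph{(i)} I will invoke the stationary construction: the vertical boundary weights $\{\omega^V_{j\e_2}(\lambda^-)\}_{j\in\Z}$ form an i.i.d.\ $\textrm{Geom}(p_V(\lambda^-))$ sequence, and similarly $\{\hat\omega^V_{\e_1+j\e_2}(\hat\lambda^+)\}_{j\in\Z}$ is i.i.d.\ in the rotated construction; the two collections are independent by the last sentence of Lemma~\ref{lem: another bound on the difference of times}, so $(Y_j)_{j=1,\dots,k}$ is i.i.d., and the analogous argument gives $(Z_j)_{j=-k+1,\dots,0}$ i.i.d. For \emph{(ii)}, I will observe that $(Y_j)_{j\geq 1}$ is measurable with respect to $\{\omega^V_{j\e_2}(\lambda^-):j\geq 1\}\cup\{\hat\omega^V_{\e_1+j\e_2}(\hat\lambda^+):j\geq 0\}$, whereas $(Z_j)_{j\leq 0}$ depends only on $\{\omega^V_{j\e_2}(\lambda^+):j\leq 0\}\cup\{\hat\omega^V_{\e_1+j\e_2}(\hat\lambda^-):j\leq -1\}$; the desired independence then follows from Theorem~\ref{thm:boundary2}\emph{(iii)} applied to the coupled pairs $(\lambda^-,\lambda^+)$ and $(\hat\lambda^-,\hat\lambda^+)$, combined with the independence of the $\omega$- and $\hat\omega$-systems.

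For part \emph{(iii)}, I will use that $Y_j$ and $Z_j$ are each the difference of two independent geometric variables, with all four relevant parameters lying in a compact subinterval of $(0,1)$ because $\lambda^{\pm},\hat\lambda^{\pm}\in[1/18,17/18]$ and $p_V$ is continuous. The lower bound $\P(Y_j\geq 1)\geq c$ will then follow from $\P(\omega^V_{j\e_2}(\lambda^-)=0)\cdot\P(\hat\omega^V_{(j-1)\e_2+\e_1}(\hat\lambda^+)\geq 1)$, and symmetrically $\P(Z_j\geq 1)\geq\P(\omega^V_{j\e_2}(\lambda^+)\geq 1)\cdot\P(\hat\omega^V_{(j-1)\e_2+\e_1}(\hat\lambda^-)=0)$, with all factors uniformly bounded below. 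The variance lower bounds are immediate from additivity of variance over the two independent summands, the variance of a $\textrm{Geom}(p_V)$ variable being $(1-p_V)/p_V^2$ and hence bounded below on the relevant compact parameter range.

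Part \emph{(iv)} is the one step requiring real care, and I would expect it to be the main obstacle, since it is where the specific scaling $s|v|_1^{-1/3}$ of the drift is produced. Writing $g(\lambda):=(1-p)/(p-q(\lambda))$ for the mean of a $\textrm{Geom}(p_V(\lambda))$ variable, we have $\E[Y_j]=g(\hat\lambda^+)-g(\lambda^-)$ and $\E[Z_j]=g(\lambda^+)-g(\hat\lambda^-)$. The function $q$ is increasing and $C^1$ with derivative \eqref{eqn: derivative of q(lambda)} bounded on $[1/18,17/18]$, so $g$ is increasing and Lipschitz on this range; non-negativity $\E[Y_j],\E[Z_j]\geq 0$ will therefore follow from the inequalities $\hat\lambda^+>\lambda^-$ and $\lambda^+>\hat\lambda^-$. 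For the upper bound, I will estimate
\[
\hat\lambda^+-\lambda^-=8s|v|_1^{-1/3}+8s|w-\e_1|_1^{-1/3}\leq C\,s|v|_1^{-1/3},
\]
using that $|w-\e_1|_1=2n-|v|_1-1\geq|v|_1-1\geq\frac{1}{2}|v|_1$, where the last step is valid since $|v|_1\geq(18s)^3\geq 18^3$ by the hypothesis $1\leq s\leq\frac{1}{18}|v|_1^{1/3}$. The identical estimate handles $\lambda^+-\hat\lambda^-$, and Lipschitzness of $g$ then yields the claimed $O(s|v|_1^{-1/3})$ upper bound for both means.
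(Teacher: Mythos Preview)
Your proposal is correct and follows essentially the same route as the paper's proof: parts \emph{(i)}--\emph{(ii)} via Theorem~\ref{thm:boundary1}\emph{(iii)} and Theorem~\ref{thm:boundary2}\emph{(iii)} together with the $\omega$/$\hat\omega$ independence from Lemma~\ref{lem: another bound on the difference of times}; part \emph{(iii)} from the geometric parameters lying in a compact subinterval of $(0,1)$; and part \emph{(iv)} by a mean-value/Lipschitz estimate on the geometric mean as a function of $\lambda$, combined with $|w-\e_1|_1\ge|v|_1-1$. One cosmetic point: the range you quote for $\hat\lambda^{\pm}$ is slightly off (the paper obtains $[1/36,35/36]$ after assuming $n\ge17$, since the constraint on $s$ involves $|v|_1$ rather than $|w-\e_1|_1$), but this does not affect the argument.
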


\begin{proof}
By construction, for every choice of $\lambda$ and $\lambda'$, the sequences $(\omega^V_{j\e_2}(\lambda))_{j\in\Z}$ and $(\hat\omega^V_{\e_1+(j-1)\e_2}(\lambda'))_{j\in\Z}$ are independent, and in the case $\lambda=\lambda'$ they are equal in distribution; see Lemma~\ref{lem: another bound on the difference of times}. Moreover, by part~\emph{(iii)} of Theorem~\ref{thm:boundary1}, the variables $(\omega^V_{j\e_2}(\lambda))_{j\ge1}$, and hence also the variables $(\hat\omega^V_{\e_1+(j-1)\e_2}(\lambda'))_{j\ge1}$, are mutually independent. It follows that $(Y_j)_{j\ge1}$ forms an i.i.d.\ sequence, and a similar argument shows the same for $(Z_j)_{j\le0}$. This proves~\emph{(i)}.

Note that $(Y_j)_{j\ge1}$ is defined in terms of $\omega^V_{j\e_2}(\lambda^-)$ and $\hat\omega^V_{\e_1+(j-1)\e_2}(\hat\lambda^+)$ for $j\ge1$, whereas $(Z_j)_{j\le0}$ is defined in terms of $\omega^V_{j\e_2}(\lambda^+)$ and $\hat\omega^V_{\e_1+(j-1)\e_2}(\hat\lambda^-)$ for $j\le0$. By part~\emph{(iii)} of Theorem~\ref{thm:boundary2} the variables $(\omega^V_{j\e_2}(\lambda^-))_{j\ge1}$ and $(\omega^V_{j\e_2}(\lambda^+))_{j\le0}$ are independent, and the same thus applies to $(\hat\omega^V_{\e_1+(j-1)\e_2}(\hat\lambda^+))_{j\ge1}$ and $(\hat\omega^V_{\e_1+(j-1)\e_2}(\hat\lambda^-))_{j\le0}$. This implies that $(Y_j)_{j\ge1}$ and $(Z_j)_{j\le0}$ are independent, establishing~\emph{(ii)}.

We proceed with the properties in parts~\emph{(iii)} and~\emph{(iv)}. Under the assumption that $s\le\frac{1}{18}|v|_1^{2/3}$ and $n\ge17$ we obtain, as in Lemmas~\ref{lem: assumption} and~\ref{lem: another bound on the difference of times}, that
$$
\frac{1}{18}\le\lambda^-\le\lambda^+\le\frac{17}{18}\quad\text{and}\quad\frac{1}{36}\le\hat\lambda^-\le\hat\lambda^+\le\frac{35}{36}.
$$
Since $p_V(\lambda)$ is a (decreasing) bijection from $(0,1)$ to $(0,p)$, it follows that under the given assumptions $p_V(\lambda)$ is bounded away from zero and one. Since $\omega^V_{j\e_2}(\lambda)$ is geometrically distributed with parameter $p_V(\lambda)$, and since $\omega^V_{j\e_2}(\lambda^-)$ and $\hat\omega^V_{\e_1+(j-1)\e_2}(\hat\lambda^+)$ are independent, it follows trivially that $\P(Y_j\ge1)\ge c$ and $\Var(Y_j)\ge c$ for some $c>0$. An identical argument gives the same for $Z_j$, which verifies~\emph{(iii)}.

Again, since $\omega^V_{j\e_2}(\lambda)$ is geometrically distributed with parameter $p_V(\lambda)$, we have
$$
\E[Y_j]=-\frac{1-p_V(\lambda^-)}{p_V(\lambda^-)}+\frac{1-p_V(\hat\lambda^+)}{p_V(\hat\lambda^+)}=\frac{p_V(\lambda^-)-p_V(\hat\lambda^+)}{p_V(\lambda^-)p_V(\hat\lambda^+)}.
$$
In addition, by definition of $p_V(\lambda)$, and since by~\eqref{eqn: derivative of q(lambda)} the derivative of $q(\lambda)$ is bounded on $[\frac{1}{36},\frac{35}{36}]$, it follows from the mean-value theorem that there exists a constant $C<\infty$ such that
$$
p_V(\lambda^-)-p_V(\hat\lambda^+)=(1-p)\frac{q(\hat\lambda^+)-q(\lambda^-)}{(1-q(\hat\lambda^+))(1-q(\lambda^-))}\le C(\hat\lambda^+-\lambda^-).
$$
In conclusion, for some constant $C'$ we have
$$
\E[Y_j]\le C'(\hat\lambda^+-\lambda^-)=C'\big(8s|w-\e_1|_1^{-1/3}+8s|v|_1^{-1/3}\big)\le 20C's|v|_1^{-1/3},
$$
as required. An identical argument gives the same for $Z_j$, which proves~\emph{(iv)}.
\end{proof}



\subsubsection{Proof of Theorem~\ref{thm: reformulated theorem}}

    First, note that by increasing the constant $C$ if necessary, we may assume that $s\leq \frac{1}{18}|v|_1^{1/3}$. We henceforth assume that $v\in R_{0,n\e_+}$ and $1\le s\le\frac{1}{18}|v|_1^{1/3}$ satisfy $\min\{v_1,v_2\}\ge C$ and $|v_2-v_1|\le s|v|_1^{2/3}$. Finally, set $k=\lfloor 2s|v|_1^{2/3}\rfloor+1$, and recall that $w=n\e_+-v$.

    By~\eqref{eq:piDi} we have
    \begin{equation}\label{eq:piDi_bound}
    \begin{aligned}
    \P\big(0, \e_1\in\pi(-v, w)\big)&=\P\big(D_i\ge0\text{ for }-v_2\le i\le w_2\big)\\
    &\le\P\big(D_i\ge0\text{ for }-k\le i\le k\big).
    \end{aligned}
    \end{equation}
    By definition, we have $D_0=0$, and by Lemmas~\ref{lem: assumption} and~\ref{lem: another bound on the difference of times}, with probability at least $1-2C\exp(-cs^3)$, we have  
    \begin{eqnarray*}
        D_i \leq \sum_{j=1}^i Y_j, \text{ for } 0< i\leq k\quad \text{and}\quad
        D_i \leq \sum_{j=i+1}^{0}Z_j, \text{ for }-k\leq i<0,
        \end{eqnarray*}
    where $(Y_j)_{j\ge1}$ and $(Z_j)_{j\le0}$ are as defined in~\eqref{eq:YjZj}. Consequently, we obtain the following upper bound on the probability in the right-hand side of~\eqref{eq:piDi_bound}
    $$
    \P\bigg(\bigg\{\sum_{j=1}^iY_j\geq 0\text{ for }i = 1, ..., k\bigg\}\bigcap\bigg\{\sum_{j=i+1}^{0}Z_j\geq0\;\text{ for }i= -k, ..., -1 \bigg\}\bigg)+2C\exp(-cs^3).
    $$
    By part~\emph{(ii)} of Lemma~\ref{lem: estimating mu, delta, sigma}, the two sequences $(Y_j)_{j\ge1}$ and $(Z_j)_{j\le0}$ are independent, so that the above expression is equal to
    $$
    \P\bigg(\sum_{j=1}^iY_j\geq 0\text{ for }i = 1, ..., k\bigg)\cdot\P\bigg(\sum_{j=i+1}^{0}Z_j\geq0\;\text{ for }i= -k, ..., -1\bigg)+2C\exp(-cs^3).
    $$
    By parts~\emph{(i)} and~\emph{(iv)} of Lemma~\ref{lem: estimating mu, delta, sigma}, the two sequences $(Y_j)_{j\ge1}$ and $(Z_j)_{j\le0}$ consist of i.i.d.\ random variables with non-negative mean bounded by $Cs|v|_1^{-1/3}$, and by part~\emph{(iii)} the quantities $\P(Y_j\ge1)$ and $\Var(Y_j)$ are bounded away from zero. We obtain, hence, from Proposition~\ref{prop: random walk} that there exists a constant $C<\infty$ such that
     \begin{align*}
         \P\big(0, \e_1\in\pi(-v, w)\big)&\le
         \left(\frac{C}{\sqrt{k}}+C\,\E[Y_j]\right)\cdot\left(\frac{C}{\sqrt{k}}+C\,\E[Z_j]\right)+2C\exp(-cs^3)\\
         &\leq\, C^2\bigg(\frac{1}{\sqrt{s}}|v|_1^{-1/3}+s|v|_1^{-1/3}\bigg)^2+2C\exp(-cs^3) \le\, 2C^2\Big(s^2|v|_1^{-2/3}+\exp(-cs^3)\Big),
     \end{align*}
   where we in the last inequality used that $s\geq 1$. This completes the proof of the theorem.

\subsection{Proof of Theorem~\ref{th:probageod}}

We end this section deducing the theorem from the introduction from Theorems~\ref{boundoutsidebulk} and~\ref{thm: reformulated theorem}.

We begin with part~\emph{(i)}, which we derive as a consequence of Theorem~\ref{boundoutsidebulk}.
By symmetry, it will suffice to consider vertices below the main diagonal, i.e.\ satisfying $v_1\ge v_2$. Subject to increasing the constant $C$ at the end, we may further assume that $v_1-v_2\ge 4$. In this case also $v-\e_1+\e_2$ lies below the main diagonal, and if $\pi(0,n\e_+)$ is above $v-\e_1+\e_2$, then $\pi(0,n\e_+)$ cannot visit $v$, so that 
$$
\P\big(v \notin \pi(0, n \e_+)\big) \geq \P\big(\pi(0, n \e_+) \text{ is above } v - \e_1 + \e_2\big).
$$
We hence obtain, from Theorem~\ref{boundoutsidebulk}, that for some constants $C,c>0$ we have
\[
\P\big(v \notin \pi(0, n \e_+)\big) \geq 1 - C\exp\bigg(-c \frac{(|v_2 - v_1| - 2)^3}{|v|_1^2}\bigg) \ge 1 - C\exp\bigg(-\frac{c}{8} \frac{|v_2 - v_1|^3}{|v|_1^2}\bigg)  ,
\]
where we in the last step have used that $\frac12|v_1-v_2|\ge 2$. This completes the proof of part~\emph{(i)}.

We proceed with the proof of part~\emph{(ii)}, which we deduce from part~\emph{(i)} and Theorem~\ref{thm: reformulated theorem}. We introduce an auxiliary variable $s\ge1$. From part~\emph{(i)} of the theorem there exist $0<c,C<\infty$ such that for all $n\ge1$ and $v\in R_{0,n\e_+}$ such that $|v|_1\le n$ and $|v_2-v_1|\ge s|v|_1^{2/3}$ we have
\begin{equation}\label{eq:vbound1}
\P\big(v\in\pi(0,n\e_+)\big)\le C\exp(-cs^3).
\end{equation}
From Theorem~\ref{thm: reformulated theorem}, together with~\eqref{eq:vbound_shift} and~\eqref{eq:vbound_split}, there exist $0<c,C<\infty$ such that for all $n\ge1$ and $v\in R_{0,n\e_+}$ such that $C\le|v|_1\le n$ and $\min\{v_1,v_2\}\ge\frac13|v|_1$ and $|v_2-v_1|\le s|v|_1^{2/3}$ we have
\begin{equation}\label{eq:vbound2}
\P\big(v\in\pi(0,n\e_+)\big)\le C\Big(s^2|v|_1^{-2/3}+\exp(-cs^3)\Big).
\end{equation}

We now set $s=(\frac{2}{3c}\log|v|_1)^{1/3}$. Since the bounds in~\eqref{eq:vbound1} and~\eqref{eq:vbound2} apply to all $v\in R_{0,n\e_+}$ such that $|v|_1\le n$ apart from a finite number not depending on $n$, it follows from~\eqref{eq:vbound1} and~\eqref{eq:vbound2} that for all such $v$ we have
$$
\P\big(v\in\pi(0,n\e_+)\big)\le C\bigg(\frac{\log|v|_1}{|v|_1}\bigg)^{2/3}+C|v|_1^{-2/3}.
$$
By increasing $C$ if necessary, the above bound applies for all $n\ge1$ and $v\in R_{0,n\e_+}$ such that $2\le|v|_1\le n$. This completes the proof of part~\emph{(ii)}, and hence of the theorem.

\subsection{The transversal fluctuation exponent via the coupling method}

We here derive as a corollary to Theorem~\ref{th:probageod} that the transversal fluctuation exponent $\xi=2/3$. This corollary will not be used in the remainder of the paper. In order to emphasise the fact that Theorem~\ref{th:probageod} applies to vertices all the way to the corners, we consider here a sharper definition of the fluctuation exponent, which considers the whole segment of the geodesic. 

For $\alpha>0$ and integers $\ell,k,n\ge1$ we let
\begin{align*}
I(\alpha,k,n)&:=k\e_++\big[-\min\{k,n-k\}^\alpha\e_2,\min\{k,n-k\}^\alpha\e_2\big],\\
B(\alpha,\ell,n)&:=\big\{v\in R_{0,n\e_+}:|v_2-v_1|\le\min\{|v|_1,|n\e_+-v|_1\}^{\alpha}+\ell\big\}.
\end{align*}
That is, for $k\le n/2$, $I(\alpha,k,n)$ is an interval of width $2k^{\alpha}$ around the diagonal, and $B(\alpha,\ell,n)$ is the set containing every point $v$ at distance at most $|v|_1^\alpha+\ell$ from the diagonal.
We now set
\begin{align*}
\xi_-&:=\sup\big\{\alpha>0:\lim_{\ell\to\infty}\sup_{n\ge2\ell}\max_{\ell\le k\le n-\ell}\P\big(\pi_n\cap I(\alpha,k,n)\neq\emptyset\big)=0\big\},\\
\xi_+&:=\inf\big\{\alpha>0:\lim_{\ell\to\infty}\inf_{n\ge\ell}\P\big(\pi_n\subseteq B(\alpha,\ell,n)\big)=1\big\}.
\end{align*}

\begin{corollary}\label{cor:xi}
    For planar last-passage percolation with geometric weights we have $\xi_-=\xi_+=2/3$.
\end{corollary}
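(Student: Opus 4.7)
The plan is to prove the four inequalities $\xi_\pm \ge 2/3$ and $\xi_\pm \le 2/3$ separately. The lower bounds $\xi_\pm \ge 2/3$ will use Theorem~\ref{th:probageod}(ii); the upper bounds $\xi_\pm \le 2/3$ will use Theorem~\ref{th:probageod}(i). In every case the argument reduces to a union bound, combined with the transverse symmetry $v \mapsto n\e_+ - v$ that swaps $|v|_1$ and $|n\e_+ - v|_1$ while preserving the distance to the main diagonal.

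For $\xi_- \ge 2/3$, I will fix $\alpha < 2/3$. After using the transverse symmetry to reduce to $k \le n/2$, the segment $I(\alpha, k, n)$ contains at most $2k^\alpha+1$ vertices, each with $|v|_1 \ge k$, and Theorem~\ref{th:probageod}(ii) then gives $\P(\pi_n \cap I(\alpha, k, n) \ne \emptyset) \lesssim k^{\alpha - 2/3}(\log k)^{2/3}$, which is eventually decreasing in $k$ and vanishes as $k \to \infty$. For $\xi_+ \ge 2/3$, the key observation is that the downmost geodesic $\pi_-(0, n\e_+)$ has a unique vertex $v_n$ on the antidiagonal $\{|v|_1 = n\}$. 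A union bound over the at most $2n^\alpha + 2\ell + 1$ candidate positions for $v_n$, combined with Theorem~\ref{th:probageod}(ii), shows $\P(v_n \in B(\alpha, \ell, n)) \to 0$ as $n \to \infty$ for any fixed $\ell$, hence $\inf_{n \ge \ell} \P(\pi_n \subseteq B(\alpha, \ell, n)) = 0$ for every fixed $\ell$.

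For $\xi_+ \le 2/3$, I will fix $\alpha > 2/3$ and bound $\P(\pi_n \not\subseteq B(\alpha, \ell, n))$ by a union bound over $v \notin B$. After reducing to $|v|_1 \le n$ via symmetry, Theorem~\ref{th:probageod}(i), the pointwise inequality $(m^\alpha + \ell)^3 \ge m^{3\alpha} + \ell^3$, and the substitution $d = m^{2/3} u$ control each row sum by a constant multiple of $m^{2/3} e^{-cm^{3\alpha-2}/2}\, e^{-c\ell^3/(2m^2)}$. Since $\alpha > 2/3$, the sequence $m^{2/3} e^{-cm^{3\alpha-2}/2}$ is summable, and dominated convergence yields a bound vanishing uniformly in $n$ as $\ell \to \infty$. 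For $\xi_- \le 2/3$, I will take $k = \lfloor n/2 \rfloor$; since any geodesic must cross $V_k$, it remains to show that, with high probability, every $v \in \pi_n \cap V_k$ satisfies $|v_2 - k| \le k^\alpha$. Theorem~\ref{th:probageod}(i) applied to $v = (k, v_2)$ with $|v_2 - k| > k^\alpha$ (using the transverse symmetry when $v_2 > n/2$) reduces the sum of $\P(v \in \pi_n)$ to a Gaussian-type tail integral starting at $A \asymp n^{\alpha - 2/3} \to \infty$, which vanishes with $n$.

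The main obstacle is the pair of inequalities $\xi_\pm \ge 2/3$, i.e.\ the sharp lower bounds on the transversal fluctuation exponent. These rely essentially on Theorem~\ref{th:probageod}(ii), itself proved via the coupling method of stationary last-passage percolation rather than via integral probability, which is precisely the novelty highlighted in the remark following Theorem~\ref{th:probageod}. By contrast, the companion bounds $\xi_\pm \le 2/3$ are routine applications of the Gaussian-type large-deviation estimate of Theorem~\ref{th:probageod}(i).
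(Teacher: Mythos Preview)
Your proposal is correct, and your arguments for $\xi_-\ge 2/3$ and $\xi_+\le 2/3$ are essentially identical to the paper's. The paper, however, does not prove the remaining two inequalities $\xi_+\ge 2/3$ and $\xi_-\le 2/3$ separately: it instead observes at the outset that $\xi_-\le\xi_+$ (asserted as immediate from the definitions, the intuition being that a geodesic contained in $B(\alpha,\ell,n)$ must hit every slice $I(\alpha',k,n)$ for $\alpha'$ slightly larger and $k$ large enough), so that $\xi_-\ge 2/3$ together with $\xi_+\le 2/3$ already forces $\xi_-=\xi_+=2/3$. Your two additional arguments---slicing at the antidiagonal $\{|v|_1=n\}$ for $\xi_+\ge 2/3$ and at $V_{\lfloor n/2\rfloor}$ for $\xi_-\le 2/3$---are valid and use exactly the same union-bound-plus-Theorem~\ref{th:probageod} machinery, but become redundant once $\xi_-\le\xi_+$ is noted. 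The upside of your route is that it sidesteps having to justify the structural inequality $\xi_-\le\xi_+$, which, while short, is not entirely content-free; the paper's route is more economical overall.
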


\begin{proof}
By definition, we have $\xi_-\le\xi_+$, so it will suffice to show that $\xi_+\le2/3$ and $\xi_-\ge2/3$. We begin with the former of the two.

Take $\alpha>2/3$. By part~\emph{(i)} of Theorem~\ref{th:probageod}, and by symmetry with respect to the transverse diagonal, we have
$$
\P\big(\pi_n\not\subseteq B(\alpha,\ell,n)\big)\le\sum_{v\in B(\alpha,\ell,n)^c}\P(v\in\pi_n)\le\sum_{v\in B(\alpha,\ell,n)^c:|v|_1\le n}2\exp\bigg(-\frac{(|v|_1^\alpha+\ell)^3}{|v|_1^{2}}\bigg).
$$
Discriminating with respect to the $\ell_1$-norm gives the further upper bound
$$
\sum_{k=1}^n\sum_{v\in B(\alpha,\ell,n)^c:|v|_1=k}2\exp\big(-k^{3\alpha-2}-\ell^3\cdot k^{-2}\big)\le\sum_{k=1}^\ell2\ell\exp\big(-\ell\big)+\sum_{k=\ell}^n2k\exp\big(-k^{3\alpha-2}\big)
$$
We then set $\beta=\min\{1,3\alpha-2\}$, and note that there exists $C<\infty$ such that $\exp(-x^\beta)\le Cx^{-3}$, for all large $x$. Hence, we obtain that
$$
\P\big(\pi_n\subseteq B(\alpha,\ell,n)\big)=1-\P\big(\pi_n\not\subseteq B(\alpha,\ell,n)\big)\ge1- \frac{2C}{\ell}-2C\sum_{k=\ell}^\infty k^{-2},
$$
and so that $\xi_+\le\alpha$.

Now, take $\alpha<2/3$. By part~\emph{(ii)} of Theorem~\ref{th:probageod} we have for $k\le n/2$ that
$$
\P\big(\pi_n\cap I(\alpha,k,n)\neq\emptyset\big)\le\sum_{v\in I(\alpha,k,n)}\P(v\in\pi_n)\le 2k^{\alpha}C\bigg(\frac{\log k}{k}\bigg)^{2/3}.
$$
By symmetry with respect to the transverse diagonal, it follows that
$$
\max_{\ell\le k\le n-\ell}\P\big(\pi_n\cap I(\alpha,k,n)\le2C(\log \ell)^{2/3}\ell^{\alpha-2/3},
$$
for all sufficiently large $\ell$. Hence $\xi_-\ge\alpha$, as required.
\end{proof}

\section{Proof of Theorem \ref{th:nsenoftraveltimes}} \label{se:proof of nsen lpp}

In this section we prove noise sensitivity of travel times via Theorem~\ref{th:genbks}. In order to do this, we recall the Bernoulli encoding of the geometric random variables introduced in~\eqref{eq:encoding} and its subsequent perturbation introduced in~\eqref{eq:bitresampling}. Note that $T_n=T(0,n\e_+)$ is transformed into a function from $\{0,1\}^{\Z^2\times\N}$ to $\R$ via the Bernoulli encoding, and that the bit-resampling noise defined in~\eqref{eq:bitresampling} is equivalent to the perturbation introduced in~\eqref{def:noise}. Consequently, in order to prove Theorem~\ref{th:nsenoftraveltimes}, it follows from Theorem~\ref{th:genbks} that it suffices to show that
$$
\sum_{v\in\Z^2}\sum_{i\in\N}\I_{v,i}(T_n)^2=o\big(\Var(T_n)\big),
$$
where $\I_{v,i}(T_n)$ is the influence of the variable $X_{v,i}$ in the encoding $\omega_v=\min\{i\ge0:X_{v,i}=1\}$ on $T_n$, i.e.\
$$
\I_{v,i}(T_n)=\E\big[\big|\E[T_n(\omega)|(X_{u,j})_{(u, j) \neq (v, i)}]-T_n(\omega)\big|\big].
$$

The first step in the proof of Theorem~\ref{th:nsenoftraveltimes} will be to relate the influences of bits associated to a vertex $v$ to the probability of $v$ being on some geodesic for $T_n$.

\begin{lemma} \label{lem: bounding sum of influences}
    For every $\delta > 0$ and $p\in(0,1)$ there exists a constant $C = C(\delta, p)<\infty$ such that for all $n \geq 1$ and $v\in R_{0, n \e_+}$ we have
    \[
    \sum_{i \in \N} \I_{v, i}(T_n)^2 \leq C\, \P(v \in \pi_n)^{2 - \delta} .
    \]
\end{lemma}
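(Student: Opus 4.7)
The plan is to compute $\I_{v,i}(T_n)$ by conditioning on all bits other than $X_{v,i}$ and bounding the effect of a single bit flip in terms of the geometry of geodesics, then to balance two complementary bounds on $\P(v \in \pi_n \mid \omega_v = G)$ in order to sum efficiently over $i$.

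First I would write, as in~\eqref{exprofinfl}, $\I_{v,i}(T_n) = 2p(1-p)\,\E\bigl[T_n^{(0)} - T_n^{(1)}\bigr]$, where $T_n^{(\eps)}$ is the last-passage time with $X_{v,i}$ forced to $\eps \in \{0,1\}$ (and $T_n^{(0)} \geq T_n^{(1)}$, since lowering a weight only lowers a maximum over paths). Flipping $X_{v,i}$ changes $\omega_v = \min\{j : X_{v,j} = 1\}$ only on the event $E_i := \{X_{v,j} = 0 \text{ for all } j < i\}$, of probability $(1-p)^i$. On $E_i$, set $M := \min\{j > i : X_{v,j} = 1\}$, so that $\omega_v^{(0)} = M > i = \omega_v^{(1)}$; a direct comparison of paths through $v$ with paths avoiding $v$ then gives $T_n^{(0)} - T_n^{(1)} \leq (M-i)\,\mathbf{1}_{\{v \in \pi_n(\omega^{(0)})\}}$. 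Decomposing by the value $M = G$ (an event of probability $(1-p)^{G-1}p$ on which $\omega^{(0)}$ is distributed as $\omega$ conditioned on $\omega_v = G$) yields
\[
\I_{v,i}(T_n) \leq C_p \sum_{G > i}(G-i)(1-p)^{G-1}\,\P(v \in \pi_n \mid \omega_v = G).
\]

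The key input is that $\{v \in \pi_n\}$ is monotone increasing in $\omega_v$: raising $\omega_v$ while keeping all other weights fixed forces every optimal path to pass through $v$, since non-$v$ paths keep their weight while $v$-paths all gain. Averaging this monotonicity against the geometric distribution of $\omega_v$ produces the FKG-type comparison $\P(v \in \pi_n \mid \omega_v = G) \leq (1-p)^{-G}\,\P(v \in \pi_n)$, which, combined with the trivial bound $\P(\cdot) \leq 1$, suggests a natural truncation. Writing $\mathbb{P} := \P(v \in \pi_n)$, $q := 1-p$, and $M_* := \lceil \log_{1/q}(1/\mathbb{P}) \rceil$, I would split the sum over $G$ at $G = M_*$, using the FKG bound for $G \leq M_*$ and the trivial bound for $G > M_*$. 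This yields $\I_{v,i}(T_n) \leq C\,\mathbb{P}\,(M_* - i + 1)^2$ for $i < M_*$ and $\I_{v,i}(T_n) \leq C\,q^i \leq C\,\mathbb{P}$ for $i \geq M_*$; squaring and summing produces
\[
\sum_{i \in \N} \I_{v,i}(T_n)^2 \leq C\,\mathbb{P}^2\,\bigl(1 + \log(1/\mathbb{P})\bigr)^5,
\]
and the polylogarithmic factor is absorbed into a $\delta$-loss in the exponent to yield the announced $C_\delta\,\mathbb{P}^{2-\delta}$. The main obstacle is precisely this last balancing: the FKG bound used alone leads to a divergent sum, while the trivial bound loses all dependence on $\mathbb{P}$, so the loss has to be kept logarithmic by truncating exactly at the scale where the two bounds become comparable.
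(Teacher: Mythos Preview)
Your argument is correct and in fact proves the slightly stronger bound $\sum_i \I_{v,i}(T_n)^2 \le C\,\mathbb{P}^2(1+\log(1/\mathbb{P}))^5$, but it takes a genuinely different route from the paper. The paper does not condition on the value of $\omega_v$ at all: after reaching the pointwise bound $|T_n\circ\sigma_{v,i}^\xi(\omega)-T_n(\omega)|\le |\sigma_i^\xi(\omega_v)-\omega_v|\cdot \mathbf{1}_{\{v\in\pi_n(\omega)\text{ or }v\in\pi_n(\sigma_{v,i}^\xi(\omega))\}}$, it simply applies H\"older's inequality with exponents $2/\delta$ and $(1-\delta/2)^{-1}$ to separate the weight-change factor from the geodesic indicator, uses that $\sigma_{v,i}^\xi(\omega)\stackrel{d}{=}\omega$ to replace the union of geodesic events by $2\,\P(v\in\pi_n)$, and then computes $\E[|\sigma_i^\xi(\omega_v)-\omega_v|^{2/\delta}]=\E[(\omega_v+1)^{2/\delta}]\cdot 2p(1-p)^i$ directly. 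This yields $\I_{v,i}(T_n)^2\le C_\delta (1-p)^{\delta i}\,\mathbb{P}^{2-\delta}$, which is summable in $i$ without any truncation.

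The trade-offs are clear. The paper's proof is shorter and avoids the monotonicity input $\{v\in\pi_n\}\uparrow$ in $\omega_v$ and the associated FKG-type comparison $\P(v\in\pi_n\mid\omega_v=G)\le (1-p)^{-G}\mathbb{P}$; the $\delta$-loss is baked in from the H\"older step rather than recovered at the end. Your approach needs those extra (though elementary) monotonicity facts and a truncation at $M_*\asymp\log(1/\mathbb{P})$, but it rewards the effort with a polylogarithmic rather than polynomial loss, and it makes transparent exactly where the interplay between the geometric tail of $\omega_v$ and the geodesic probability lives.
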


\begin{proof}
Recall the encoding, from~\eqref{eq:encoding}, of $\omega=(\omega_v)_{v\in\Z^2}$ where $\omg_v = \min\{i\ge0 : X_{v, i} = 1\}$ for i.i.d\ Bernoulli random variables $(X_{v, i})_{v\in\Z^2,i \in \N}$ with parameter $p$. For $v\in\Z^2$, $i \in \N$ and $k\in\{0,1\}$ we let $\sigma_{v,i}^k(\omega)$ denote the weight configuration obtained from $\omega$ by replacing $X_{v,i}$ in the encoding by $k$, and let $\sigma_i^k(\omega_v)$ denote the weight obtained from $\omega_v$ by replacing $X_{v,i}$ in the encoding by $k$. Then, we may think of $\sigma_{v,i}^k$ as an operator on $\{0,1\}^{\Z^2\times\N}$, and coordinate $v$ of the configuration $\sigma_{v,i}^k(\omega)$ equals $\sigma_i^k(\omega_v)$.

Let $\xi \sim \mathrm{Ber}(p)$ be a random variable independent of $(X_{v, i})_{v \in \Z^2, i \in \N}$.
By definition of the difference operators in~\eqref{def:nabla}, we have
    $$
    \nabla_{v, i} T_n(\omega) = \E^{\xi}\big[T_n\circ\sigma_{v, i}^{\xi}(\omega)\big]-T_n(\omega) = \E^{\xi}\big[T_n\circ\sigma_{v, i}^{\xi}(\omg) - T_n(\omega)\big],
    $$
    where $\E^\xi$ denotes expectation with respect to $\xi$ only. Since all coordinates $u\neq v$ of $\omega$ and $\sigma_{v, i}^{\xi}(\omega)$ coincide, the total passage time cannot change when replacing $X_{v,i}$ by $\xi$ unless $v$ belongs to $\pi_n(\omg)$ or to $\pi_n\big(\sigma_{v, i}^{\xi}(\omg)\big)$.
    Moreover, since the function $T_n(\omega)$ is Lipschitz in every coordinate $\omega_v$, the change in travel time when replacing $\omg$ by $\sigma_{v,i}^\xi(\omega)$ is bounded by $|\sigma_{i}^{\xi}(\omega_v)-\omega_v|$. We deduce
    $$
    \big|T_n\circ\sigma_{v,i}^{\xi}(\omega)-T_n(\omega)\big|\leq \big|\sigma_{i}^\xi(\omega_v)-\omega_v\big|\cdot\1_{\big\{v\in \pi_n(\sigma_{v,i}^{\xi}(\omega))\text{ or }v\in\pi_n(\omega)\big\}}.
    $$
   Hence, for the influence of the bit $X_{v, i}$ on $T_n$ we obtain that 
    \begin{align*}
      \I_{v, i}(T_n) &= \E \big[\big|\nabla_{v, i}\,T_n(\omega)\big| \big]\\
      &\leq \E\big[\big|T_n\circ\sigma_{v, i}^{\xi}(\omega)-T_n(\omega)\big|\big]\\
      &\leq \E\Big[\big|\sigma_i^{\xi}(\omega_v)-\omega_v\big|\cdot \big(\1_{\{v\in \pi_n(\sigma_{v,i}^{\xi}(\omega))\}}+\1_{\{v\in\pi_n(\omega)\}}\big)\Big].
    \end{align*} 
    Applying Hölder's inequality with conjugate exponents $\frac{2}{\delta}$ and $\frac{1}{1 - \delta/2}$, we obtain that
    \begin{equation}\label{eq:inf_bound}
      \I_{v, i}(T_n) \leq 2\,\E \Big[ \big|\sigma_i^{\xi}(\omega_v) - \omega_v\big|^{2 / \delta} \Big]^{\delta/2} \cdot\,\P\big(v\in \pi_n\big)^{1-\delta/2}.
    \end{equation}
    
    To bound the expectation in~\eqref{eq:inf_bound}, we note that if $X_{v,j}=1$ for some $j<i$ or if $X_{v, i} = \xi$, then $\sigma_i^{\xi}(\omega_v)=\omega_v$. 
    Hence, if $\sigma_i^{\xi}(\omega_v)\neq\omega_v$, then $X_{v, j} = 0$ for all $j \leq i - 1$ and $\xi = 1-X_{v, i}$, implying that $\min\{\sigma_i^{\xi}(\omega_v), \omega_v\}=i$ and $\max\{\sigma_i^{\xi}(\omega_v), \omega_v\}$ is equal to the position of the first 1 after position $i$. The weight difference can then be written as 
    $$
    \big|\sigma_i^{\xi}(\omega_v)-\omega_v\big|=\big(\min \{j > i : X_{v, j} = 1\} - i\big)\cdot \1_{\{X_{v, j}=0 \text{ for all }j< i\text{ and } X_{v, i}\neq\xi\}}.
    $$
    Since $(X_{v, j})_{j \leq i}$ and $\xi$ are independent of $(X_{v, j})_{j > i}$, and since the difference $\min \{j > i : X_{v, j} = 1\} - i$ has the same distribution as $\omega_v+1$, we obtain that
    \begin{equation}\label{eq:inf_bound2}\begin{aligned}
    \E \Big[ \big|\sigma_i^{\xi}(\omega_v) - \omega_v\big|^{2 / \delta} \Big] &=  \E\big[ (\omega_v+1)^{2 / \delta}\big] \cdot\P\big(X_{v, j} = 0 \text{ for all } j < i \text{ and }X_{v, i} \neq \xi\big)  \\
    &= \E\big[ (\omega_v+1)^{2 / \delta}\big]\cdot 2 p(1 - p)^{i}.
    \end{aligned}\end{equation}
    
We now set $C= 2p\,\E\big[(\omg_v + 1)^{2 / \delta}\big]$, which is a finite constant depending only on $p$ and $\delta$. By~\eqref{eq:inf_bound} and~\eqref{eq:inf_bound2} it follows that
    \begin{align*}
    \sum_{i \in \N} \I_{v, i}(T_n)^2 &\leq 4\sum_{i \in \N} \Big( C (1 - p)^{i}  \Big)^{\delta}\cdot\,\P\big(v \in \pi_n\big)^{2 - \delta} \\
    &= 4C^\delta \frac{1}{1 - (1 - p)^{\delta}}\,\cdot\, \P\big(v\in \pi_n\big)^{2-\delta}.
    \end{align*}
    That concludes the proof.
\end{proof}

\begin{proof}[Proof of Theorem~\ref{th:nsenoftraveltimes}]
Recall from~\eqref{eqn:variance of Tn} that there exists $c>0$ such that $\Var(T_n) \geq c n^{2/3}$ for all large $n$.
    Hence, by Theorem~\ref{th:genbks} it will suffice to show that
    \begin{equation}\label{eq:proof_cond}
    \sum_{v \in \Z^2} \sum_{i \in \N} \I_{v, i}(T_n)^2 = o(n^{2/3}).
    \end{equation}
     Fix $0<\delta < 0.001$. By Lemma~\ref{lem: bounding sum of influences}, there exists $C=C(\delta,p)$ such that
    \begin{equation}\label{eq:proof_cond2}
    \sum_{v \in \Z^2} \sum_{i \in \N} \I_{v, i}(T_n)^2 \leq C \sum_{v \in \Z^2} \P(v \in \pi_n)^{2 - \delta} \, .
    \end{equation}
   Since only vertices inside $R_{0,n\e_+}$ may affect the outcome of $T_n$, and by the symmetry with respect to the transverse diagonal of $R_{0, n \e_+}$, from $n \e_2$ to $n \e_1$, it will suffice to consider only $v\in R_{0,n\e_+}$ such that $|v|_1\le n$. That is,
$$
        \sum_{v \in \Z^2} \P(v \in \pi_n)^{2 - \delta} \le 2\sum_{|v|_1\leq n} \P(v \in \pi_n)^{2 - \delta}\le 2\sum_{k=1}^n\sum_{|v|_1=k}\P(v \in \pi_n)^{2 - \delta},
$$
where the summation is restricted to vertices in $R_{0,n\e_+}$.



For $k$ fixed, we split the sum over vertices in $R_{0,n\e_+}$ with $|v|_1=k$ into two sums, depending on whether $v$ is `close' or `far' from the main diagonal. Let $C_k=\{v\in \Z_{\ge0}^2:|v|_1=k,|v_2-v_1|\le k^{2/3+\delta}\}$ and $F_k=\{v\in \Z_{\ge0}^2:|v|_1=k,|v_2-v_1|> k^{2/3+\delta}\}$. This gives the further bound
$$
\sum_{v \in \Z^2} \P(v \in \pi_n)^{2 - \delta} \le 2\sum_{k=1}^n\bigg[\sum_{v\in C_k}\P(v \in \pi_n)^{2 - \delta}+ \sum_{v\in F_k}\P(v \in \pi_n)^{2 - \delta}\bigg].
$$
Part~\emph{(i)} of Theorem~\ref{th:probageod} gives for $v\in F_k$ that $\P(v \in \pi_n)\le C\exp(-ck^{3\delta})$, and part~\emph{(ii)} of the same theorem gives for $v\in C_k$ that $\P(v \in \pi_n)\le Ck^{-2/3+\delta}$, for some constants $0<c,C<\infty$. Consequently, we obtain that
$$
\sum_{v \in \Z^2} \P(v \in \pi_n)^{2 - \delta} \le 2C\sum_{k=1}^n\bigg[\sum_{v\in C_k}k^{-(2 - \delta)(2/3-\delta)}+ \sum_{v\in F_k}\exp\big(-2ck^{3\delta}\big)\bigg],
$$
which is upper bounded by
$$
4C\sum_{k=1}^n\Big[k^{2/3+\delta}k^{-(2 - \delta)(2/3-\delta)}+k\exp\big(-2ck^{3\delta}\big)\Big]\le4C\sum_{k=1}^nk^{-2/3+4\delta}+C\le Cn^{1/3+4\delta}+C.
$$
This shows, via~\eqref{eq:proof_cond2}, that~\eqref{eq:proof_cond} holds, as required.
\end{proof}

\begin{remark}
The sum of influences squared is smaller than the variance by a polynomial margin. Examining the exponent $\theta(p,t)=\tanh(\rho t/2)$, where $\rho$ is a constant depending on $p$, we note that for any sequence $(t_n)_{n\ge1}$ such that $t_n\log n\to\infty$, the above proof gives that
\[
\lim_{n \rightarrow + \infty} \Corr \big( T_n(\omg), T_n(\omg^{t_n}) \big) = 0.
\]
\end{remark}



\section{Comparison between bit-resampling and site-resampling noise} \label{se:coupling argument}

In Section~\ref{se:proof of nsen lpp} we proved noise sensitivity of the travel times $T_n$ for the bit-resampling noise $(\omg_t)_{t \geq 0}$. We shall here compare the bit-resampling noise to the site-resampling noise .
On the heuristic side, for a typical vertex site-resampling results in a larger perturbation than bit-resampling. However, vertices with very large weight are more likely to be affected by bit-resampling than site-resampling at a fixed noise level.
In order to establish a covariance inequality, comparing site-resampling to bit-resampling, we note that there exists a constant $C$ such that in a square of side length $n$, none of the geometrically distributed weights will attain a value greater than $C\log n$ with probability tending to one. This is why we in Proposition~\ref{prop:couplingargument} obtain a comparison between the two noises at noise strengths that differ by a log-factor.


\subsection{Monotonicity with respect to resampling}

Before we attend to the proof of Proposition~\ref{prop:couplingargument}, we first formalise the idea that resampling more variables leads to a lower covariance in a general setting.
Let $I$ be a finite set and $Y=(Y_i)_{i \in I}$ and $Y'=(Y'_i)_{i \in I}$ two independent vectors of independent random variables, all drawn from some common distribution. For every subset $S\subseteq I$ we define another vector $Y^S=(X_i^S)_{i\in I}$ where
\begin{equation*}
Y^S_i := \left\{
\begin{aligned}
Y'_i &&& \text{if } i \in S, \\
Y_i &&& \text{otherwise}.
\end{aligned}
\right. 
\end{equation*}
The idea that resampling more variables leads to a smaller correlation is captured by the following lemma.

\begin{lemma} \label{le:covineq}
Let $f:\R^I\to\R$ be any function such that $\E[f(Y)^2]<\infty$. Let $\mathscr{S}$ and $\tilde{\mathscr{S}}$ be random subsets of $I$, independent of $Y$ and $Y'$. Suppose that $\mathscr{S} \subseteq \tilde{\mathscr{S}}$ almost surely, then
\[
\Cov \big( f( Y ), f ( Y^{\mathscr{S}}) \big) \geq \Cov \big( f ( Y ), f ( Y^{\tilde{\mathscr{S}}}) \big).
\]
\end{lemma}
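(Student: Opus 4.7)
The plan is to first reduce to deterministic nested subsets $S \subseteq T \subseteq I$ by conditioning on the pair $(\mathscr{S}, \tilde{\mathscr{S}})$, and then to prove the resulting deterministic inequality using that the resampling operator associated to a fixed set acts as an orthogonal projection in $L^2$.

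For the reduction step, set $\phi(S) := \E[f(Y) f(Y^S)]$ for deterministic $S \subseteq I$. Since $(\mathscr{S}, \tilde{\mathscr{S}})$ is independent of $(Y, Y')$, conditioning on this pair and applying the tower property yields $\E[f(Y) f(Y^{\mathscr{S}})] = \E[\phi(\mathscr{S})]$ and $\E[f(Y) f(Y^{\tilde{\mathscr{S}}})] = \E[\phi(\tilde{\mathscr{S}})]$. Moreover, since $Y^S$ has the same law as $Y$ for every $S$, the mean terms in the two covariances agree, and it suffices to show that $\phi(S) \geq \phi(T)$ whenever $S \subseteq T \subseteq I$; integrating against the coupling $\mathscr{S} \subseteq \tilde{\mathscr{S}}$ then gives the claim.

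For the deterministic statement, I would introduce the operator $P_S \colon L^2 \to L^2$ defined by $P_S g(y) := \E[g(y^S)]$, where $y^S$ is formed by taking $y_i$ for $i \in S^c$ and $Y'_i$ for $i \in S$. Then $\phi(S) = \E[f(Y) \, P_S f(Y)]$. Two properties, both relying crucially on the independence of the coordinates of $Y$, are essential: the operator $P_S$ is self-adjoint (the joint law of $(Y, Y^S)$ is invariant under swapping $Y_i$ and $Y'_i$ for each $i \in S$), and idempotent ($P_S^2 = P_S$, since integrating out the coordinates in $S$ twice gives the same result as once). Hence $P_S$ is an orthogonal projection onto the subspace of functions not depending on the coordinates in $S$. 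When $S \subseteq T$, the image of $P_T$ is contained in the image of $P_S$, so $P_S P_T = P_T$, and also $P_T P_S = P_T$.

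With these tools in hand, $P_T(P_S f - P_T f) = P_T f - P_T f = 0$, so $P_S f - P_T f$ is orthogonal to $P_T f$, and the Pythagorean theorem gives
\[
\| P_S f \|_2^2 \;=\; \| P_T f \|_2^2 + \| P_S f - P_T f \|_2^2 \;\geq\; \| P_T f \|_2^2 .
\]
Self-adjointness combined with idempotence yields $\phi(S) = \E[f \, P_S f] = \| P_S f \|_2^2$, and similarly for $T$, so the above inequality is exactly $\phi(S) \geq \phi(T)$, completing the proof. The argument is essentially frictionless; the only place where any care is genuinely needed is in establishing the projection properties of $P_S$, which is the one step that uses the independence of the coordinates of $Y$ in an essential way.
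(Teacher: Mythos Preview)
Your proof is correct and follows essentially the same approach as the paper: reduce to deterministic nested sets by conditioning on $(\mathscr{S},\tilde{\mathscr{S}})$, then show $\E[f(Y)f(Y^S)]=\|P_Sf\|_2^2$ and use monotonicity in $S$. The only cosmetic difference is that you phrase the deterministic step via orthogonal projections and the Pythagorean theorem, whereas the paper writes $P_Sf=\E[f(Y)\mid Y_{S^c}]$ and invokes the tower property plus Jensen; since conditional expectation is precisely the $L^2$ orthogonal projection, these are the same argument in different clothing.
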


\begin{proof}
By equality in law of $Y$, $Y^{\mathscr{S}}$ and $Y^{\tilde{\mathscr{S}} }$, we have $f(Y)$, $f(Y^{\mathscr{S}})$ and $f(Y^{\tilde{\mathscr{S}}})$ are equal in mean, so
\begin{equation}\label{eq:covar1}
\Cov \big( f ( Y ), f ( Y^{\mathscr{S}}) \big) - \Cov \big( f( Y ), f ( Y^{\tilde{\mathscr{S}}}) \big) = \E \big[ f ( Y ) f ( Y^{\mathscr{S}}) \big] - \E \big[ f ( Y) f ( Y^{\tilde{\mathscr{S}}}) \big]  .
\end{equation}
Summing over all $S \subseteq \tilde{S}\subseteq I$, using that $\mathscr{S} \subseteq \tilde{\mathscr{S}}$ a.s.\ and that $(Y,Y')$ and $(\mathscr{S}, \tilde{\mathscr{S}})$ are independent,
\begin{equation}\label{eq:covar2}
\E \big[ f ( Y) f ( Y^{\mathscr{S}}) \big] - \E \big[ f( Y ) f ( Y^{\tilde{\mathscr{S}}}) \big] = \sum_{S \subseteq \tilde{S}}\P\big((\mathscr{S},\tilde{\mathscr{S}}) = (S,\tilde{S})\big) \Big( \E \big[f(Y) f(Y^{S}) \big] - \E \big[ f(Y) f(Y^{\tilde{S}}) \big]\Big)  . 
\end{equation}
We now claim that for fixed sets $S \subseteq \tilde{S}\subseteq I$ we have
\begin{equation}\label{eq:covar3}
 \E \big[f(Y) f(Y^{S}) \big]  \geq \E\big[f(Y) f(Y^{\tilde{S}})\big],
\end{equation}
which via~\eqref{eq:covar1} and~\eqref{eq:covar2} proves the lemma.

Below we shall write $Y_S$ for the collection of variables $(Y_i)_{i\in S}$. Since $\tilde{S}^c \subseteq S^c$, we have
\[
\E \big[ f(Y) | Y_{\tilde{S}^c} \big] = \E \big[ \E \big[ f(Y) | Y_{S ^c}  \big] \big| Y_{\tilde{S}^c}\big]  ,
\]
which via Jensen's inequality implies that
\begin{equation}\label{eq:covar4}
\E \Big[ \E \big[ f(Y) | Y_{\tilde{S}^c} \big]^2 \Big] \leq \E \Big[ \E \big[ f(Y) | Y_{S ^c}  \big]^2 \Big]  .
\end{equation}
Finally, since $Y_S$ and $Y'_S$ are independent and identically distributed, and independent of $Y_{S^c}$, we have
\[
\E\big[f(Y) f(Y^{S})\big] = \E\Big[ \E\big[f(Y) f(Y^{S}) |Y_{S^c}\big]\Big]= \E\Big[ \E\big[f(Y)|Y_{S^c}\big]\E\big[ f(Y^{S}) |Y_{S^c}\big]\Big]  = \E\Big[ \E\big[f(Y) | Y_{S^c}\big]^2\Big]  .
\]
Together with~\eqref{eq:covar4}, this proves~\eqref{eq:covar3}, as required.
\end{proof}

\subsection{Proof of Proposition~\ref{prop:couplingargument}}

We shall construct a coupling of the bit-resampling and site-resampling noises, defined in~\eqref{eq:bitresampling} and~\eqref{eq:siteresamping} respectively. In the case of bit-resampling, the construction below will be identical to that in~\eqref{eq:bitresampling}, whereas for site-resampling, the construction below will differ from the construction in~\eqref{eq:siteresamping} in order to create a suitable coupling between the two.
Before we begin we fix a constant $C<\infty$ such that for all $n\ge1$
\begin{equation}\label{eq:Cdef}
\P\big( \exists v \in R_{0, n\e_+} : \omg_v \geq C\, {\log n}\big) \leq Cn^{-3}.
\end{equation}
This can be done since the geometric distribution has an exponential tail.

Let $(X_{v, i}, X'_{v, i}, U_{v, i})_{v \in \Z^2, i \in \N}$ be a family of mutually independent random variables such that $X_{v, i}$ and $X_{v, i}'$ are Bernoulli distributed with parameter $p \in (0, 1)$, and $U_{v, i}$ is exponential distributed with parameter $1$, for all $v \in \Z^2$ and $i \in \N$. 
Given $n \ge1$, and with $C$ as in~\eqref{eq:Cdef}, we let $M := \lfloor C\log(n) \rfloor$ and set, for $v\in\Z^2$,
\[
\tilde{U}_v := M \Big( \min \limits_{0 \leq i \leq M-1} U_{v, i} \Big),
\]
which again is exponentially distributed with parameter $1$. For $t\ge0$ we let
\begin{equation*}
X^t_{v, i} := \left\{
\begin{aligned}
X_{v, i} &&& \text{if }t < U_{v, i}; \\
X'_{v, i} &&& \text{if } t \geq U_{v, i};
\end{aligned}
\right.
\quad\text{and}\quad
\tilde{X}_{v, i} := \left\{
\begin{aligned}
X_{v, i} &&& \text{if }t < \tilde{U}_v; \\
X'_{v, i} &&& \text{if } t \geq \tilde{U}_v.
\end{aligned}
\right.
\end{equation*}
Finally, we specify $(\omg^t)_{t \geq 0}$ and $(\tilde{\omg}^t)_{t \geq 0}$ by
\begin{equation}\label{eq:coupling}
\begin{aligned}
\omg^t_v &:= \min \big\{i \in \N : X_{v, i}^t = 1 \big\}  , \\
\tilde{\omg}_v^t &:= \min \big\{i \in \N : \tilde{X}_{v, i}^t = 1 \big\} .
\end{aligned}
\end{equation}
We emphasise that the construction of $(\omega^t)_{t\ge0}$ is identical to that in~\eqref{eq:bitresampling} but that the construction of $(\tilde\omega^t)_{t\ge0}$ differs from that given in~\eqref{eq:siteresamping} in order to create a simultaneous construction of the two processes on a common probability space. The process $(\tilde\omega^t)_{t\ge0}$, as defined in~\eqref{eq:coupling}, is nevertheless equal in distribution to that given in~\eqref{eq:siteresamping}.

Given a configuration $\omega=(\omega_v)_{v\in\Z^2}$ we let $\omega\wedge M$ be the configuration defined by $(\omega\wedge M)_v=\min\{\omega_v,M\}$ for all $v\in\Z^2$.
Note that, for each $t > 0$, every bit that has been resampled in $(X_{v, i}^t)_{v \in \Z^2, i < M}$ has also been resampled in $(\tilde{X}^{M t}_{v, i})_{v \in \Z^2, i < M}$. Lemma~\ref{le:covineq}, applied to $I = R_{0, n \e_+} \times \{0, \dots, M-1\}$, $\mathscr{S} = \{(v, i) \in I : t \geq U_{v, i}\}$ and $\tilde{\mathscr{S}} = \{(v, i) \in I : Mt \geq \tilde{U}_{v, i} \}$, thus gives that
\begin{equation} \label{eq:covineq}
\Cov \big(T_n(\omg \wedge M), T_n(\omg^t \wedge M) \big) \geq \Cov \big(T_n(\omg \wedge M), T_n(\tilde{\omg}^{M t} \wedge M) \big)  .
\end{equation}

We next claim that
\begin{equation}\label{eq:covcomp}
\big|  \Cov \big(T_n(\omg), T_n(\omg^t) \big) - \Cov \big(T_n(\omg \wedge M), T_n(\omg^t \wedge M) \big)\big| = o(1),
\end{equation}
and that the analogous expression for $\tilde{\omg}^{Mt}$, instead of $\omg^t$, also holds. From~\eqref{eq:covcomp}, for $\tilde\omega^{Mt}$, it follows that
$$
\Cov \big( T_n(\omg), T_n( \tilde{\omg}^{Mt}) \big) \leq \Cov \big(T_n(\omg \wedge M), T_n(\tilde{\omg}^{Mt} \wedge M) \big) + o(1),
$$
which via~\eqref{eq:covineq} gives
$$
\Cov \big( T_n(\omg), T_n( \tilde{\omg}^{Mt}) \big) \leq\Cov \big(T_n(\omg \wedge M), T_n(\omg^t \wedge M) \big) + o (1).
$$
Another application of~\eqref{eq:covcomp} now gives
$$
\Cov \big( T_n(\omg), T_n( \tilde{\omg}^{Mt}) \big) \leq \Cov \big(T_n(\omg), T_n(\omg^t) \big) + o(1),
$$
which would complete the proof of the proposition.

It remains to show that~\eqref{eq:covcomp} holds as stated, as well as for $\tilde\omega^{Mt}$ instead of $\omega^t$. Since the computations are similar for $\omg^t$ and $\tilde{\omg}^{Mt}$, we present them for $\omg^t$ only. Let $A_t$ denote the event that there exists $v\in R_{0,n\e_+}$ for which $\omega_v^t\ge M$. We write $A$ instead of $A_0$ for brevity. 
Since $T_n$ only depends on the weights associated to $v \in R_{0, n \e_+}$, we have on the event $A$ that $T_n(\omg) = T_n(\omg \wedge M)$. Using that $T_n(\omg) - T_n(\omg \wedge M) \leq T_n(\omg)$ and the Cauchy-Schwarz inequality, we deduce that
\begin{align*}
\E \big[ T_n(\omg)\big] - \E\big[T_n(\omg \wedge M) \big] &= \E \big[\big(T_n(\omg) - T_n(\omg \wedge M) \big)\1_{A} \big] \leq \E\big[  T_n(\omega)\1_{A}\big] \leq \sqrt{\P(A)\,\E[T_n^2]}  .
\end{align*}
Via the identity $a^2 - b^2 = (a-b)(a+b)$ and the inequality $T_n(\omega\wedge M)\le T_n(\omega)$, we deduce further that
\[
\E \big[T_n(\omg)^2\big] - \E\big[T_n (\omg \wedge M)^2\big]\le 2\,\E[T_n]\sqrt{\P(A)\,\E[T_n^2]} \leq 2 \sqrt{\P(A)\,\E[T_n^4]}  .
\]
An analogous argument gives also that
\begin{align*}
\E \big[T_n(\omg)T_n(\omg^t)\big]& - \E\big[T_n(\omg \wedge M) T_n(\omg^t \wedge M) \big] \leq \E\big[T_n(\omega)T_n(\omega^t)\1_{A\cup A^t}\big]\\
&\le\sqrt{\P(A \cup A_t)\,\E\big[T_n(\omg)^2 T_n(\omg^t)^2\big]} \leq 2 \sqrt{\P(A)\,\E[T_n^4]}  ,
\end{align*}
and hence that
$$
\big|  \Cov \big(T_n(\omg), T_n(\omg^t) \big) - \Cov \big(T_n(\omg \wedge M), T_n(\omg^t \wedge M) \big)\big| \leq 4 \sqrt{\P(A)\,\E[T_n^4]}.
$$
By~\eqref{eq:Cdef} we have $\P(A)\le Cn^{-3}$, and since
$$
\E[T_n^4]\le\E\bigg[\Big(\sum_{v\in R_{0,n\e_+}}\omega_v\Big)^4\bigg]\le15n^2\,\E[\omega^4],
$$
we conclude that~\eqref{eq:covcomp} holds, as required.

\begin{remark}
    The comparison between bit-resampling and site-resampling exhibited in Proposition~\ref{prop:couplingargument} is not exclusive to last-passage percolation. A similar statement would hold for any sequence $(f_n)_{n\ge1}$ of functions $f_n:\N^n\to\R$ which takes as an input independent geometrically distributed random variables, as long as the ratio $\sqrt{\E[f_n^4]}/\Var(f_n)$ and the number of geometric variables $f_n$ depends on grow at most polynomially in $n$.
\end{remark}

\section{Open problems}\label{se:open}

We end this paper with a few suggestions for further research. We begin with the obvious problem, discussed already in the introduction, of pinning down the correct threshold for noise sensitivity to occur. 

\begin{problem}
    Prove part~(ii) of Conjecture~\ref{conj:threshold}, i.e.\ that geometric last-passage percolation is noise sensitive, as $n\to\infty,$ for $t\gg n^{-1/3}$.
\end{problem}

Our main theorem shows that geometric last-passage percolation is noise sensitive with respect to the bit-resampling noise. Although we come close, as discussed in Section~\ref{se:coupling argument}, our approach is not enough to establish noise sensitivity with respect to the site-resampling noise.

\begin{problem}\label{prob:siteresampling}
    Show that for every $t>0$ we have $\Corr\big(T_n(\omega)),T_n(\tilde\omega^t)\big)\to0$ as $n\to\infty$.
\end{problem}

One way to approach the above problem would be to prove a version of Theorem~\ref{th:genbks} with respect to the site-resampling noise, as defined in~\eqref{eq:siteresamping}. It is possible that such a result would only hold for a subclass of functions, since hypercontractivity fails in general; see Section~\ref{se:geometric resampling not hyperc}.

\begin{problem}
    Prove a version of the BKS theorem for a suitable class of function under site-resampling.
\end{problem}

Closely related to noise sensitivity is the question about exceptional times in dynamical percolation. In dynamical Bernoulli percolation, edges (or sites) are resampled according to independent Poisson processes. The central question is whether, at critical, there exist exceptional times at which an infinite connected component appears. (In dimensions $d=2$ and $d\ge11$, at a fixed time, an infinite connected component is known not to exist at criticality, almost surely.) Dynamical percolation was introduced by Häggström, Peres and Steif~\cite{hagperste97}, who showed that no exceptional times exist in dynamical percolation in sufficiently high dimensions. Interestingly, exceptional times were show to exist in two dimensions by Schramm and Steif~\cite{schste10} and Garban, Pete and Schramm~\cite{garpetsch10}; see also~\cite{tasvan23}.

A natural candidate for an exceptional event in the context of spatial growth, substituting the occurrence of an infinite connected component, is the existence of bi-infinite geodesics, or bigeodesics. A {\bf bigeodesic} is a bi-infinite directed nearest-neighbour path $(x_k)_{k\in\Z}$ with the property that every finite segment is a geodesic between the endpoints. Bigeodesics always exist, as every horizontal or vertical line is trivially a bigeodesic. However, non-trivial bigeodesics are known not to exist. This was first proved by Basu, Hoffman and Sly~\cite{bashofsly22} for exponential last-passage percolation, and an alternative proof was later given in~\cite{balbussep20}. The argument of the latter was later extended to rule out non-trivial bigeodesics in geometric last-passage percolation in~\cite{grojanras25}.

The question of exceptional times at which non-trivial bigeodesics exist was recently considered in work of Bhatia~\cite{bhatia1,bhatia2}. The existence of exceptional times at which a bigeodesics appears was not confirmed in this work, but bounds were given that shows that if exceptional times do not exist, then they just barely do not exist. At the same time, the author conjectured that if there are exceptional times at which bigeodesics exist, then the Hausdorff dimension of this set of times is zero.

\begin{problem}
    Determine whether there exist exceptional times at which the origin is contained in a non-trivial bigeodesic.
\end{problem}

In loose terms, a sequence of Boolean functions is noise sensitive if we cannot tell the outcome of the function when only partial information, selected randomly, is available. Benjamini, Kalai and Schramm~\cite{benkalsch99} also posed some questions regarding partial information selected deterministically. A question of this type in the context of last-passage percolation would be the following. Consider again the encoding in~\eqref{eq:encoding}, and suppose that for all $v\in\Z^2$ we are told the value of all bits in the encoding of the weight at $v$ except for the first one. That is, for all vertices $v\in\Z^2$, we are told the value of the weight at $v$ in case it is non-zero, but we do not know if it is non-zero or not. Based on this information, what can we say about $T_n$?

Let $\mathcal{F}$ be the $\sigma$-algebra generated by all bits $(X_{v,i})_{v\in\Z^2,i\ge1}$. The information obtained by observing $\mathcal{F}$ can then be measured by $\Var\big(\E[T_n|\mathcal{F}]\big)$. If $\Var\big(\E[T_n|\mathcal{F}]\big)$ is small in comparison to $\Var(T_n)$, then observing $\mathcal{F}$ gives little information about $T_n$.

\begin{problem}
    Show that $\Var\big(\E[T_n|\mathcal{F}]\big)=o\big(\Var(T_n)\big)$ as $n\to\infty$.
\end{problem}

Last-passage percolation with geometric weights is one out of a few exactly solvable models known to belong to the KPZ class of universality. As mentioned in the introduction, it is possible to encode an exponential random variable either via a Bernoulli encoding or a Gaussian encoding, and hence prove noise sensitivity via Theorem~\ref{th:genbks} or Theorem~\ref{th:gaussbks}, for the respective notion of noise. One could argue that the exponential distribution is more naturally perturbed via site-resampling. We expect that proving noise sensitivity of exponential last-passage percolation via site-resampling is roughly equivalent to solve Problem~\ref{prob:siteresampling}.
A more interesting extension could be to establish noise sensitivity for a continuum model of last-passage percolation, such as Poisson last-passage percolation (also known as the Hammersley process) or the related longest increasing subsequence problem of a random perturbation.

Let $\eta$ be a unit Poisson point process on $\R^2$ and let $L_n(\eta)$ denote the maximal number of points than one may collect when travelling from $(0,0)$ to $(n,n)$ along an up-right directed path. For $t>0$ we let $\eta^t$ be obtained from $\eta$ by first thinning $\eta$ by parameter $1-t$ and then super-positioning an independent Poisson point process $\eta'$ of intensity $t$, sometimes written as $\eta^t=(1-t)\eta+t\eta'$.

\begin{problem}
    Show that Poisson last-passage percolation is noise sensitive in the sense that for every $t>0$ we have $\Corr\big(L_n(\eta),L_n(\eta^t)\big)\to0$ as $n\to\infty$.
\end{problem}

Let us finally propose a problem asking for a limiting distribution for the transversal fluctuations of a geodesic. It was proved in~\cite{joh00} that there exist constants $a$ and $b$ such that $an^{-1/3}(T_n-bn)$ converges in distribution to the Tracy-Widom law. While the correct scaling of transversal fluctuations is also known, a similar limiting distribution remains unknown. One way to address this is in terms of the last position on the vertical axis visited by some geodesic from $-n\e_+$ to $n\e_+$. That is, since
$$
T(-n\e_+,n\e_+)=\max_i\big[T(-n\e_+,i\e_2)+T(i\e_2+\e_1,n\e_+)\big],
$$
we may consider
$$
Z_n:=\max\big\{i\in\Z:T(-n\e_+,n\e_+)=T(-n\e_+,i\e_2)+T(i\e_2+\e_1,n\e_+)\big\},
$$
where we take the maximum since the equality may hold for various values of $i$.

\begin{problem}
    Determine the limiting distribution, if it exists, of $n^{-2/3}Z_n$ as $n\to\infty$.
\end{problem}

It is well-known, since the work of~\cite{johansson00b,baideimclmilzho01}, that $Z_n$ is typically of order $n^{2/3}$.

\printbibliography

\end{document}